\documentclass[12pt]{article}
\usepackage{amsmath,amssymb,amsthm,amsfonts,mathrsfs}
\usepackage{appendix}
\usepackage{array,enumitem,graphics,xcolor,yfonts,colonequals}
\usepackage{stmaryrd}
\usepackage[all,cmtip]{xy}
\usepackage{tikz-cd}
\usepackage[colorlinks,anchorcolor=blue,citecolor=blue,linkcolor=blue,urlcolor=blue,bookmarksopen=true]{hyperref}
\usepackage{comment}
\usepackage{mathtools}
\usepackage[margin=1in]{geometry}

\usepackage{titlesec}
\titleformat{\section}
  {\normalfont\large\bf}{\thesection}{1em}{}
\titleformat{\subsection}[runin]
  {\normalfont\normalsize\bf}{\thesubsection}{1em}{}
\titleformat{\subsubsection}[runin]
  {\normalfont\normalsize\bf}{\thesubsubsection}{1em}{}

\usepackage{fancyhdr}
\fancypagestyle{titlepage}
{
	\fancyhf{}

	\fancyfoot[l]{
	\href{https://mathscinet.ams.org/mathscinet/msc/msc2020.html}
		{\emph{2020 Mathematics Subject Classification}}
		53D37, 
        14F08, 
        14J33, 
        18G70 
        \\
		\emph{Keywords}: special Lagrangian, Bridgeland stability conditions, mirror symmetry
	}
}

\AtBeginDocument{%
	\def\MR#1{}
}

\newcommand{\bC}{\mathbb{C}}    
\newcommand{\bH}{\mathbb{H}}    
\newcommand{\bL}{\mathbb{L}}
\newcommand{\bP}{\mathbb{P}}    
\newcommand{\bR}{\mathbb{R}}    
\newcommand{\bZ}{\mathbb{Z}}    

\newcommand{\cA}{\mathcal{A}}   
\newcommand{\cD}{\mathcal{D}}
\newcommand{\cE}{\mathcal{E}}   
\newcommand{\cF}{\mathcal{F}}   
\newcommand{\cL}{\mathcal{L}}  
\newcommand{\cO}{\mathcal{O}}   
\newcommand{\cP}{\mathcal{P}}   
\newcommand{\cS}{\mathcal{S}}
\newcommand{\cT}{\mathcal{T}}
\newcommand{\cW}{\mathcal{W}}

\newcommand{\Alg}{\mathrm{Alg}}
\newcommand{\ch}{\mathrm{ch}}           
\newcommand{\Coh}{\mathrm{Coh}} 
\newcommand{\dd}{\mathrm{d}}
\newcommand{\Db}{\mathrm{D^b}}  
\newcommand{\DFuk}{\mathrm{DFuk}}

\newcommand{\Geo}{\mathrm{Geo}}
\newcommand{\GL}{\mathrm{GL}}           

\newcommand{\LG}{\mathrm{LG}}

\newcommand{\Mir}{\mathrm{Mir}}

\newcommand{\rank}{\mathrm{rank}}

\newcommand{\Stab}{\mathrm{Stab}} 
\newcommand{\Tw}{\mathrm{Tw}}

\newcommand{\Hom}{\operatorname{Hom}}   
\newcommand{\Perf}{\operatorname{Perf}}

\newcommand{\fs}{\mathfrak{s}}

\newtheorem*{thm*}{Theorem}
\newtheorem*{prop*}{Proposition}
\newtheorem*{cor*}{Corollary}
\newtheorem*{ques*}{Question}
\newtheorem{thm}{Theorem}[section]
\newtheorem{prop}[thm]{Proposition}
\newtheorem{cor}[thm]{Corollary}
\newtheorem{lemma}[thm]{Lemma}
\newtheorem{conj}[thm]{Conjecture}

\numberwithin{equation}{section}

\theoremstyle{definition}
\newtheorem{defn}[thm]{Definition}

\newtheorem{rmk}[thm]{Remark}

\begin{document}
\title{Special Lagrangians and Bridgeland stable objects beyond geometric stability conditions: the product case}
\author{Yu-Wei Fan}
\date{}
\maketitle
\thispagestyle{titlepage}

\begin{abstract}
We construct a family of non-geometric Bridgeland stability conditions on certain wrapped Fukaya categories, using homological mirror symmetry and categorical K\"unneth formulae. 
These stability conditions correspond to certain holomorphic volume forms, under which we prove that every stable object admits a special Lagrangian representative. 
This provides the first higher-dimensional examples of stability conditions away from the large complex structure limit for which ``stable implies special Lagrangian" is proved.
\end{abstract}

\setcounter{tocdepth}{2}
\tableofcontents

\section{Introduction.}

The visionary proposal of Thomas and Yau \cite{Thomas, ThomasYau} suggests a profound correspondence between the existence of \emph{special Lagrangian} submanifolds in Calabi--Yau manifolds and notions of \emph{stability} rooted in algebraic geometry. Building upon this framework, more recent developments have led to an extended version of these conjectures, using the language of Bridgeland stability conditions on derived Fukaya categories. This connection was briefly alluded to in the foundational work of Bridgeland \cite[Section~1.4]{BriStab}, where the notion of stability conditions on triangulated categories was first introduced, and was subsequently formulated more fully and explicitly by Joyce \cite{Joyce}. To provide the necessary context, we recall a simplified version of Joyce's conjecture, omitting its technical details.

\begin{conj}[{\cite[simplified version of Conjecture~3.2]{Joyce}}]\label{conj:Joyce}
Let $(M, \omega, \Omega)$ be a compact Calabi--Yau $n$-fold, where $\omega$ is a K\"ahler form and $\Omega$ is a holomorphic volume form. There exists a Bridgeland stability condition $\sigma=(Z,\cP)$ on the derived Fukaya category $\DFuk(M)$, whose objects are Lagrangian branes $\bL$, such that:
\begin{enumerate}[label=(\roman*)]
    \item The central charge $Z$ is given by the composition:
    $$
    K_0\left(\DFuk(M)\right)\xrightarrow{\bL\mapsto[L]}H_n(M,\bZ)\xrightarrow{[L]\mapsto\int_{[L]}\Omega}\bC,
    $$
    where $L\subseteq M$ denotes the underlying Lagrangian of $\bL$.
    \item An object $\bL$ is $\sigma$-semistable if and only if its underlying Lagrangian admits a (possibly singular) special Lagrangian representative.
\end{enumerate}
\end{conj}

\noindent This conjecture remains largely open. For a survey of recent progress in this direction, we refer the reader to \cite{Li_Yang}.

In the present work, we focus on the derived categories of toric Fano manifolds and their mirror Fukaya--Seidel-type categories. To fix the notation, let $X$ be a projective toric manifold of dimension $n$. Its mirror is expected to be a Landau--Ginzburg model $((\bC^\times)^n, W)$, where the superpotential $W\colon(\bC^\times)^n \to \bC$ is a Laurent polynomial. More concretely, let $V \subseteq \bZ^n$ be the set of primitive generators of the $1$-cones of the fan defining $X$, together with the origin. The mirror superpotential is then given by
$$
W_{\mathbf{a}}=\sum_{v\in V} a_v z^v, \quad z^v \coloneqq z_1^{v_1} \cdots z_n^{v_n}
$$
where $\mathbf{a} = \{a_v\}_{v \in V} \subseteq \bC$ is a set of generic coefficients. It is expected that the Fukaya--Seidel-type category associated with $W_{\mathbf{a}}$ is independent of a generic choice of $\mathbf{a}$, up to equivalence.

A folklore conjecture predicts that for generic coefficients $\mathbf{a}$, the category admits a Bridgeland stability condition whose central charge is given by integrating the holomorphic $n$-form
$$
\Omega_{\mathbf{a}} \coloneqq \exp\left(W_{\mathbf{a}}\right) \frac{\dd z_1}{z_1} \wedge \cdots \wedge \frac{\dd z_n}{z_n}
$$
along Lagrangian thimbles. Stable objects should then correspond to those that admit special Lagrangian representatives with respect to $\Omega_{\mathbf{a}}$. Note that central charges of this form have appeared previously in the literature, for instance in \cite{Iritani,Fang}.

So far, the conjecture has been verified only in dimension one, as a particular case of the seminal work of Haiden, Katzarkov, and Kontsevich \cite{HKK}. To state their result, we adopt the framework of partially wrapped Fukaya categories with stops \cite{GPS20,GPS} as our model for a Fukaya--Seidel-type category.
Let $\cW(\bC^\times,\fs)$ be the partially wrapped Fukaya category of $\bC^\times$ with stops at $2$ points on the boundary.
There is a homological mirror symmetry equivalence \cite[Example~1.22]{GPS}
$$
H^0\Tw(\cW(\bC^\times,\fs))\cong\Db(\bP^1).
$$

\begin{thm}[{\cite[Theorems 5.2 and 6.3]{HKK}}]\label{thm:intro-HKK}
There exists an isomorphism
$$
\bC^2_{(a,c)}\xrightarrow{\sim}\Stab H^0\Tw\left(\cW(\bC^\times,\fs)\right); \quad
(a,c)\mapsto\sigma_{a,c}=(Z_{a,c},\cP_{a,c})
$$
satisfying the following properties:
\begin{enumerate}[label=(\roman*)]
    \item The central charge $Z_{a,c}$ is given by the oscillatory integral
$$
Z_{a,c}(\bL)=\int_{[\bL]}\exp\left(z+c+\frac{e^a}{z}\right)\frac{\dd z}{z}.
$$
    \item An object $\bL$ is an indecomposable $\sigma_{a,c}$-semistable object of phase $\phi$ if and only if it admits a (graded) special Lagrangian representative of phase $\phi$ (equipped with an indecomposable local system) with respect to the holomorphic $1$-form
$$
\Omega_{a,c}\coloneqq\exp\left(z+c+\frac{e^a}{z}\right)\frac{\dd z}{z}.
$$
    In particular, every $\sigma_{a,c}$-stable object $\bL$ admits a special Lagrangian representative with respect to $\Omega_{a,c}$.
\end{enumerate}
\end{thm}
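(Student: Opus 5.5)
Since this theorem is quoted from \cite{HKK}, the natural route is to specialize the Haiden--Katzarkov--Kontsevich framework for flat surfaces to the cylinder $\bC^\times$ with two boundary stops. The plan has three steps.

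\emph{Step 1: the flat structure.} Observe that $\Omega_{a,c}=\exp(z+c+e^a/z)\,\dd z/z$ is a nowhere-vanishing holomorphic $1$-form on $\bC^\times$. It has exponential-type singularities at $z=0$ and $z=\infty$. Near each puncture the real part of the exponent goes to $-\infty$ in exactly one sector. Consequently the flat metric $|\Omega_{a,c}|^2$ turns $\bC^\times$ into a flat surface of finite area with two conical points at infinity. Each of these has a single ``infinite-angle'' direction, which corresponds to one stop. This recovers the marked surface $(\bC^\times,\fs)$ with two boundary stops. One then checks that the assignment $(a,c)\mapsto\Omega_{a,c}$ parametrizes all such flat structures with the right local model, up to isotopy. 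The parameter $c$ rescales $\Omega$ by $e^c$. The parameter $a$ governs the relative position of the two critical values of $z+e^a/z$, namely $\pm 2e^{a/2}$, so it sweeps out the moduli of horizontal strip decompositions.

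\emph{Step 2: HKK's main theorems.} By \cite[Theorem~5.2]{HKK}, every flat structure of this exponential type gives a stability condition on $H^0\Tw(\cW(\bC^\times,\fs))$. Its central charge is $\bL\mapsto\int_{[\bL]}\Omega$, which gives (i). Its semistable objects are exactly the flat geodesics, i.e.\ special Lagrangians, with local systems; this is \cite[Theorem~6.3]{HKK} and gives (ii). The mechanism is as follows. The horizontal foliation of $\Omega_{a,c}$ has finitely many saddle-free strips. These cut the surface into horizontal strips whose core arcs form a full exceptional collection, namely the two thimbles, which are mirror to $\cO,\cO(1)$. This collection determines a heart of finite length whose simples have $Z$ in the upper half-plane. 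Semistable objects are then classified by Harder--Narasimhan filtrations, which are realized geometrically by geodesic representatives: a curve admits a geodesic representative if and only if its object is semistable.

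\emph{Step 3: the isomorphism onto $\Stab$.} One shows the map $\bC^2\to\Stab$ is a local homeomorphism. This holds because both spaces are $2$-dimensional and the period map $(a,c)\mapsto(Z(\text{thimble}_1),Z(\text{thimble}_2))$ is locally biholomorphic. Injectivity follows from HKK's identification of $\Stab$ with the moduli of framed flat structures. Surjectivity uses the known description of $\Stab\Db(\bP^1)\cong\bC^2$ due to Okada, together with the fact that the image is open and closed.

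The main obstacle is the wall-crossing locus where the horizontal foliation has a saddle connection, i.e.\ where $\Omega$ has a closed geodesic. There the strip decomposition degenerates, and one needs HKK's argument that geodesics persist and that the semistable closed loops with local systems account for the non-rigid semistable objects (the skyscraper sheaves). The identification with $\bC^2$ also requires checking that every point of $\Stab$ is hit, which I would handle by comparing with Okada's explicit description rather than by redoing HKK's general surjectivity argument.
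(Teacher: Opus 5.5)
Your outline takes essentially the paper's route: the paper gives no argument of its own and simply cites HKK, and your plan of specializing HKK's identification of framed flat structures with stability conditions, then using Okada's connectedness of $\Stab\Db(\bP^1)$ for surjectivity, is the right way to unpack that citation. One correction to Step~1: the flat surface has infinite area, and at each puncture the sector where the real part of the exponent tends to $-\infty$ is an infinite-angle conical point at \emph{finite} distance where the Lagrangians end (which is why $\int_{[\bL]}\Omega_{a,c}$ converges), while the complementary infinite-area sector is what corresponds to the stop; relatedly, a horizontal saddle connection and a closed geodesic are different degenerations, not synonyms.
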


The main result of this paper is a higher-dimensional generalization of Theorem~\ref{thm:intro-HKK}. By the K\"unneth formula for wrapped Fukaya categories \cite[Corollary~1.18]{GPS}, there is an equivalence
$$
\cW(\bC^\times,\fs)^{\otimes n}\cong\cW((\bC^\times)^n,\fs_n),
$$
where $(\bC^\times)^n$ is equipped with the split symplectic form
$$
\omega_n\coloneqq\pi_1^*\omega_{\bC^\times}+\cdots+\pi_n^*\omega_{\bC^\times},
$$
with $\omega_{\bC^\times}$ denoting the canonical exact symplectic form on the cotangent bundle $T^*S^1\cong\bC^\times$, and $\fs_n$ denoting the \emph{product stop} \cite{GPS}. Our main result establishes the existence of a family of stability conditions on this category satisfying the property that every stable object is special Lagrangian.

\begin{thm}[see Theorem~\ref{thm:SG-main-thm}]\label{thm:IntroMainThmSG}
There exists a connected $n$-dimensional submanifold (with boundary) $\cS_n\subseteq\bC^n_{(a_1,\ldots,a_n)}$ and an embedding
$$
\cS_n\times\bC_{(c)}\hookrightarrow\Stab H^0\Tw(\cW((\bC^\times)^n,\fs_n)); \quad (a_1,\ldots,a_n,c)\mapsto\sigma_{\mathbf{a},c}=(Z_{\mathbf{a},c},\cP_{\mathbf{a},c})
$$
such that:
\begin{enumerate}[label=(\roman*)]
    \item The central charge $Z_{\mathbf{a},c}$ is given by the integral
$$
Z_{\mathbf{a},c}(\bL)=\int_{[\bL]}\exp\left(z_1+\cdots+z_n+c+\frac{e^{a_1}}{z_1}+\cdots+\frac{e^{a_n}}{z_n}\right)\frac{\dd z_1}{z_1}\cdots\frac{\dd z_n}{z_n}.
$$
    \item If an object $\bL$ is $\sigma_{\mathbf{a},c}$-stable, then it admits a special Lagrangian representative (up to isotopy) in $((\bC^\times)^n,\omega_n)$ with respect to the holomorphic $n$-form
$$
\Omega_{\mathbf{a},c}\coloneqq\exp\left(z_1+\cdots+z_n+c+\frac{e^{a_1}}{z_1}+\cdots+\frac{e^{a_n}}{z_n}\right)\frac{\dd z_1}{z_1}\cdots\frac{\dd z_n}{z_n}.
$$
\end{enumerate}
\end{thm}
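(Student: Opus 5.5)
The plan is to build $\sigma_{\mathbf{a},c}$ as a "product" of the one-dimensional stability conditions of Theorem~\ref{thm:intro-HKK}, transported through the K\"unneth equivalence $\cW(\bC^\times,\fs)^{\otimes n}\cong\cW((\bC^\times)^n,\fs_n)$, and then to recognize stable objects as products of one-dimensional special Lagrangians.

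Since $\sigma_{a,c}$ depends on $c$ only through a $\bC$-action, I would first set $c=0$ and write $c=c_1+\cdots+c_n$ at the end. Under HMS, $H^0\Tw(\cW(\bC^\times,\fs))\cong\Db(\bP^1)$, which has a full strong exceptional collection $(\cO,\cO(1))$, i.e.\ it is the derived category of the Kronecker quiver. The $n$-fold tensor product is then $\Db((\bP^1)^n)$, with the box-product exceptional collection $\cO(\epsilon_1,\ldots,\epsilon_n)$, $\epsilon_i\in\{0,1\}$. Take the region $\cS_n\subseteq\bC^n$ to be those $\mathbf{a}$ for which each $\sigma_{a_i}$ lies in the open subset where the heart is the Kronecker quiver heart $\cA_i$ (suitably rotated), with a common phase normalization. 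For these parameters I would take the heart $\cA=\cA_1\boxtimes\cdots\boxtimes\cA_n$. This is the module category of the tensor product of path algebras, which is finite-dimensional of finite global dimension, so the heart is of finite length with the $2^n$ simples $S_{\epsilon}=S^{(1)}_{\epsilon_1}\boxtimes\cdots\boxtimes S^{(n)}_{\epsilon_n}$. I would define $Z$ multiplicatively, $Z(E_1\boxtimes\cdots\boxtimes E_n)=\prod_i Z_{a_i}(E_i)$. Fubini then gives the oscillatory integral formula (i), because a product of thimbles integrates the product form to the product of integrals. For a finite-length heart, a stability condition exists as soon as all $Z(S_\epsilon)$ lie in the half-plane $\bH\cup\bR_{<0}$. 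This constrains the phases, $\sum_i\phi_i(S^{(i)}_{\epsilon_i})\in(0,1]$, and that is what cuts out $\cS_n$: it is a submanifold with boundary and not all of $\bC^n$, since products of phases in $(0,1]$ can leave the half-plane. Connectedness should come from the explicit description in terms of the $\arg$ of the one-dimensional central charges $Z_{a_i}$ (Bessel-type integrals). Injectivity of the map follows since central charges determine $\mathbf{a}$ and $c$.

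For (ii), the key claim is that every $\sigma_{\mathbf{a}}$-stable object is, up to shift, of the form $E_1\boxtimes\cdots\boxtimes E_n$ with each $E_i$ being $\sigma_{a_i}$-stable. Granting this, Theorem~\ref{thm:intro-HKK} supplies special Lagrangians $L_i\subseteq\bC^\times$ of phase $\phi_i$ for $\Omega_{a_i,c_i}$. The product $L_1\times\cdots\times L_n$ is Lagrangian for the split form $\omega_n$, and $\Omega_{\mathbf{a},c}$ restricts to $\prod_i\Omega_{a_i,c_i}|_{L_i}$. That restriction has constant phase $\sum\phi_i$, so the product is special Lagrangian and represents the object under K\"unneth, up to isotopy.

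The main obstacle is the classification of stable objects. One approach goes through the quiver: the product quiver with relations (commutative cubes of Kronecker quivers) and King stability with respect to the product central charge. The tensor product of stable objects should be stable when $\cS_n$ is chosen so that the phases behave well. For the converse, I would try to show that a stable $E$ with a fixed dimension vector has endomorphisms $\bC$ and must be a tensor product. The argument is to restrict along each factor, using the $\Db(\bP^1)$-linear structure in each variable, and show that the HN filtration in direction $i$ is preserved by the others. This is where I would expect to shrink $\cS_n$: stay in a region where the phases of the two Kronecker simples in each factor are close, so that stable Kronecker modules are only $\cO(k)$-type objects, i.e.\ exceptional or point-type. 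Then use a Schur-type argument and the K\"unneth formula for $\Hom$ to rule out non-product stable objects. If that fails, the fallback is to handle it by explicit enumeration of the stable dimension vectors near the boundary of $\cS_n$.
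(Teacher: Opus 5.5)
Your treatment of (i), and of (ii) once the classification is granted, matches the paper. The gap is the classification of stable objects. It is not merely unproved: on the region you define it is false, and the shrinking you suggest (making the two Kronecker simple phases in each factor close) does not fix it.

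Every stability condition on $\Db(\bP^1)$, geometric ones included, has a rotated Kronecker heart $\langle\cO(k+1),\cO(k)[1]\rangle$. For geometric $\sigma_\tau$ with $\tau$ close to the segment $(k,k+1)$, the two simple phases are arbitrarily close. So your spread condition admits tuples with two or more geometric factors, and there non-product stable objects exist. A concrete example:
\begin{itemize}
\item Take $n=2$, $\sigma_1=\sigma_2=\sigma_\tau$ with $\tau=\tfrac12+\sqrt{-1}\epsilon$, $0<\epsilon\ll1$, and $k=0$.
\item The product heart has simples $S_{11}=\cO(1,1)$, $S_{10}=\cO(1,0)[1]$, $S_{01}=\cO(0,1)[1]$, $S_{00}=\cO(0,0)[2]$.
\item Their central charges are $Z(S_{11})=(\tfrac12-\sqrt{-1}\epsilon)^2$, $Z(S_{10})=Z(S_{01})=\tfrac14+\epsilon^2$ and $Z(S_{00})=(\tfrac12+\sqrt{-1}\epsilon)^2$. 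So phases strictly decrease along the arrows $00\to10,01\to11$.
\item Let $M$ have dimension vector $(d_{00},d_{10},d_{01},d_{11})=(0,1,1,2)$, with each pair of arrows out of $V_{10}$ and $V_{01}$ sending $1$ to $e_1,e_2$.
\item Then $\mathrm{End}(M)=\bC$, and the only proper nonzero subobjects have dimension vectors $(0,0,0,d)$, $(0,1,0,2)$ and $(0,0,1,2)$.
\item Comparing $Z(M)=1-2\sqrt{-1}\epsilon$ with $Z(0,1,0,2)=\tfrac34-\epsilon^2-2\sqrt{-1}\epsilon$ and with $Z(S_{11})$ shows that $M$ is stable.
\item But $[M]=2[\cO(1)]\otimes[\cO(1)]-[\cO(1)]\otimes[\cO]-[\cO]\otimes[\cO(1)]$ has tensor rank $2$, so $M\not\cong E_1\boxtimes E_2$. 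Your argument for (ii) says nothing about it.
\end{itemize}
What matters is not how close the two simple phases are but their \emph{order}. In an algebraic factor $\phi(\cO(k+1))\geq\phi(\cO(k)[1])$, so phases weakly increase along that factor's arrows. In a geometric factor they strictly decrease, and two decreasing directions produce stable representations like $M$.

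This is why the paper takes $\cS_n$ to consist of tuples with \emph{at most one} geometric factor; the classification is then proved as follows.
\begin{itemize}
\item All-algebraic case (Proposition~\ref{prop:existence-case1}): the shifted line bundles $\cL_I[p_I]$ form a full Ext-exceptional collection for arbitrary $\phi_\ell\geq1$, so no small-spread condition or Kronecker product heart is needed. The condition $\phi_\ell\geq1$ forces $\Hom^1(\cL_{I'}[p_{I'}],\cL_I[p_I])=0$ whenever $\phi(\cL_I[p_I])<\phi(\cL_{I'}[p_{I'}])$. Hence composition series can be reordered by non-increasing phase, and only the $\cL_I$ are stable.
\item One geometric factor (Proposition~\ref{prop:GeoAlg-EUa}): one inducts via the semiorthogonal decomposition along an algebraic coordinate and glues hearts via Lemma~\ref{lemma:CP-gluing-heart}. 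The inequality $\phi(\cO(k_{n+1}+1))\geq\phi(\cO(k_{n+1}))+1$ kills the extensions from $\cA_0$- to $\cA_1$-generators that could otherwise glue into non-product stable objects.
\end{itemize}

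Your quiver picture can be repaired the same way. Split $M$ along an algebraic coordinate $j$ as $0\to N'\boxtimes S^{(j)}_1\to M\to N''\boxtimes S^{(j)}_0\to0$. Stability of $M$ together with $\phi(S^{(j)}_1)\geq\phi(S^{(j)}_0)$ gives $\Hom(N'',N')=0$. So the extension splits by K\"unneth, and induction finishes. This requires two things you did not impose:
\begin{itemize}
\item the algebraic condition on all but one factor;
\item allowing one geometric factor (or fixing all $k_i$) to keep the region connected, since purely algebraic tuples fall into components indexed by $(k_1,\ldots,k_n)$.
\end{itemize}
A minor point: injectivity cannot come from central charges alone, since $c$ and $c+2\pi\sqrt{-1}$ give the same $Z$. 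You need the phases of stable objects, as in Lemma~\ref{lemma:property-b}.
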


\begin{rmk}
Recent work by Stoppa \cite{Stoppa2505, Stoppa2508} also investigates the link between Bridgeland stability and the special Lagrangian condition in related contexts. Stoppa’s approach utilizes the theory of deformed Hermitian--Yang--Mills equations \cite{CollinsYau} and primarily addresses stability conditions near the large complex structure limit.

In contrast, the stability conditions constructed in Theorem~\ref{thm:IntroMainThmSG} are \emph{away} from the large complex structure limit: Under the homological mirror symmetry equivalence (see Lemma~\ref{lemma:Mirror-(P1^)n-H0TwW})
$$
\mathrm{H}^0\Tw(\cW((\bC^\times)^n, \fs_n)) \cong \Db((\bP^1)^n),
$$
the skyscraper sheaves of points on $(\bP^1)^n$ are not stable with respect to these stability conditions (see Remark~\ref{rmk:Sn-not-geometric}). 
Theorem~\ref{thm:IntroMainThmSG} may therefore be viewed as a first step toward a higher-dimensional ``stable implies special Lagrangian" for \emph{non-geometric} stability conditions.
\end{rmk}

\begin{rmk}\label{rmk:motivation-product}
We now outline the heuristic motivation for Theorem~\ref{thm:IntroMainThmSG}, noting that the following discussion is largely conjectural.
Suppose we are given $n$ toric Fano manifolds $X_1, \ldots, X_n$ of dimensions $d_1, \ldots, d_n$, respectively. 
Let their mirror superpotentials be
$$
W^{(i)}_{\mathbf{a}^{(i)}} = \sum_{v \in V^{(i)}} a_v^{(i)} (z^{(i)})^v,
$$
where $V^{(i)} \subseteq \bZ^{d_i}$ is the set of primitive generators of the 1-cones of the fan defining $X_i$ (augmented by the origin), and $\mathbf{a}^{(i)} = \{a_v^{(i)}\}_{v \in V^{(i)}} \subseteq \bC$ is a set of coefficients. The folklore conjecture predicts that the holomorphic $d_i$-form
$$
\Omega_{\mathbf{a}^{(i)}}\coloneqq\exp\left(W^{(i)}_{\mathbf{a}^{(i)}}\right)\frac{\dd z_1^{(i)}}{z_1^{(i)}}\cdots\frac{\dd z_{d_i}^{(i)}}{z_{d_i}^{(i)}}
$$
induces a stability condition on the Fukaya--Seidel-type category mirror to $\Db(X_i)$, and stable objects are those admitting special Lagrangian representatives with respect to $\Omega_{\mathbf{a}^{(i)}}$. 
Let $\Stab^\LG \Db(X_i) \subseteq \Stab \Db(X_i)$ be the subset of stability conditions arising in this way.

The mirror of the product $X_1 \times \cdots \times X_n$ is expected to be the Landau--Ginzburg model
$$
\left( (\bC^\times)^{\sum d_i}, \sum_{i=1}^n W^{(i)}_{\mathbf{a}^{(i)}} \right).
$$
The folklore conjecture suggests that the product form
$$
\Omega_{\text{prod}} \coloneqq \exp\left( \sum_{i=1}^n W^{(i)}_{\mathbf{a}^{(i)}} \right) \bigwedge_{i=1}^n \left( \frac{\dd z_1^{(i)}}{z_1^{(i)}} \wedge \cdots \wedge \frac{\dd z_{d_i}^{(i)}}{z_{d_i}^{(i)}} \right)
$$
determines a stability condition on the corresponding product category.
Because the holomorphic form $\Omega_{\text{prod}}$ splits, such a stability condition should satisfy two key properties:
\begin{enumerate}[label=(\alph*)]
\item The central charge is multiplicative: $Z([L_1 \times \cdots \times L_n]) = \prod_{i=1}^n Z_i([L_i])$.
\item If each $L_i$ admits a special Lagrangian representative with respect to $\Omega_{\mathbf{a}^{(i)}}$, then the product $L_1 \times \cdots \times L_n$ admits a special Lagrangian representative with respect to $\Omega_{\text{prod}}$.
\end{enumerate}
This heuristic motivates the definition of \emph{product-type} stability conditions (Definition~\ref{defn:product-type-intro}). Furthermore, it predicts an embedding
$$
\prod_{i=1}^n \left( \Stab^\LG \Db(X_i) / \bC \right) \hookrightarrow \Stab^\LG \Db(X_1 \times \cdots \times X_n) / \bC,
$$
where we quotient by the free $\bC$-actions corresponding to the constant terms $a_0^{(i)}$ in the superpotentials.

Note that the same heuristic applies to the Calabi--Yau setting for stability conditions arising from the holomorphic volume form, as in Conjecture~\ref{conj:Joyce}. One may therefore expect a similar embedding to exist for products of Calabi--Yau manifolds.
\end{rmk}

The preceding discussion motivates the following definition.

\begin{defn}\label{defn:product-type-intro}
A stability condition $\sigma\in\Stab\Db(X_1\times\cdots\times X_n)$ is said to be of \emph{product-type} if there exist stability conditions $\sigma_i=(Z_i,\cP_i)\in\Stab\Db(X_i)$ for each $i=1,\ldots,n$ such that:
\begin{enumerate}[label=(\roman*)]
    \item The central charge of an external tensor product satisfies 
    $$
    Z(E_1\boxtimes\cdots\boxtimes E_n)=\prod_{i=1}^n Z_i(E_i) \quad \text{ for all objects }  E_i\in\Db(E_i).
    $$
    \item If each $E_i\in\Db(X_i)$ is $\sigma_i$-stable of phase $\phi_i$, then $E_1\boxtimes\cdots\boxtimes E_n\in\Db(X_1\times\cdots\times X_n)$ is $\sigma$-stable of phase $\sum_{i=1}^n\phi_i$.
\end{enumerate}
\end{defn}

The proof of Theorem~\ref{thm:IntroMainThmSG} is provided in Section~\ref{sec:Proof-of-main-thm}.
The logic of our argument is in the reverse direction to the motivation outlined in Remark~\ref{rmk:motivation-product}.
We first construct a family of product-type stability conditions on $\Db((\bP^1)^n)$ with the following key property: an object $E \in \Db((\bP^1)^n)$ is $(\sigma_1 \boxtimes \cdots \boxtimes \sigma_n)$-stable if and only if it decomposes as $E \cong E_1 \boxtimes \cdots \boxtimes E_n$, where each factor $E_i \in \Db(\bP^1)$ is $\sigma_i$-stable.
Transporting these stability conditions through the homological mirror symmetry equivalence, we obtain a family of stability conditions on the Fukaya--Seidel-type category whose stable objects are isotopic to products of the mirrors of the $E_i$'s. By Theorem~\ref{thm:intro-HKK}, each mirror factor admits a special Lagrangian representative with respect to the holomorphic $1$-form $\exp(z_i + \frac{e^{a_i}}{z_i}) \frac{\dd z_i}{z_i}$. It follows that the product of these Lagrangians is special with respect to the total $n$-form $\Omega_{\mathbf{a},c}$, thereby establishing the ``stable implies special Lagrangian" correspondence.

We verify the heuristic proposed in Remark~\ref{rmk:motivation-product}, regarding the embedding of stability spaces for products of Fano and Calabi--Yau manifolds via product-type stability conditions, for the following two-dimensional cases.

\begin{thm}[see Theorems~\ref{thm:P1xP1} and \ref{thm:E1xE2}]
\label{thm:embedding-P1-E}
There exists an embedding
$$
\left(\Stab\Db(\bP^1)/\bC\right)\times\left(\Stab\Db(\bP^1)/\bC\right)\hookrightarrow\Stab\Db(\bP^1\times\bP^1)/\bC
$$
which sends a pair of stability conditions $(\overline{\sigma_1}, \overline{\sigma_2})$ to the unique product-type stability condition $\overline{\sigma_1 \boxtimes \sigma_2}$.
Analogously, for any two elliptic curves $X_1$ and $X_2$, there exists an embedding
$$
\left(\Stab\Db(X_1)/\bC\right)\times\left(\Stab\Db(X_2)/\bC\right)\hookrightarrow\Stab\Db(X_1\times X_2)/\bC
$$
via product-type stability conditions.
\end{thm}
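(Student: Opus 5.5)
We treat the two statements in parallel, constructing $\sigma_1\boxtimes\sigma_2$ explicitly from a heart. The strategy is: describe $\Stab\Db$ of each factor as a single $\widetilde{\GL}^+(2,\bR)$- or $\bC$-orbit-type space whose elements are explicitly understood. Then build on the product a heart whose simple or semistable objects are external tensor products. Finally, check the support property and uniqueness.

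\textbf{The $\bP^1\times\bP^1$ case.} By Okada, $\Stab\Db(\bP^1)\cong\bC^2$, and every stability condition is, up to the $\bC$-action, either geometric or algebraic with respect to an exceptional pair $(\cO(k),\cO(k+1))$ (or a shift). Modulo $\bC$, we may therefore assume that $\sigma_i$ has heart $\cA_i=\langle \cO(k_i+1),\cO(k_i)[1]\rangle$ when $\sigma_i$ is non-geometric, or $\Coh(\bP^1)$ otherwise.

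\textbf{Construction in the algebraic case.} On $\bP^1\times\bP^1$ we use the full strong exceptional collection of exterior products $\cO(a,b)$, $a\in\{k_1,k_1+1\}$, $b\in\{k_2,k_2+1\}$. Its Koszul-dual collection consists of the products of the simples $S^{(1)}_j\boxtimes S^{(2)}_l$ of $\cA_1,\cA_2$. These four objects are the simples of a finite-length heart $\cA$, namely the heart of modules over $\mathrm{End}(\oplus\cO(a,b))$, which is the tensor product of two Kronecker quivers. We define $Z$ on these simples by $Z_1(S_j)Z_2(S_l)$. Since $\cA$ has finitely many simples, we rotate by the $\bC$-action so that all four values lie in $\bH$. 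This works provided $\arg Z_1(S_j)+\arg Z_2(S_l)$ spreads less than $\pi$, which holds after rotating each factor, because for an algebraic $\sigma_i$ the two simples have phases in an interval of length $<1$.

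\textbf{Product property and stability.} Multiplicativity (i) follows from $K_0(\bP^1\times\bP^1)=K_0(\bP^1)^{\otimes 2}$ and bilinearity. For (ii), stable objects of $\sigma_i$ are exceptional line bundles $\cO(m)$ with shifts, plus skyscrapers in the geometric chamber. The key claim is that $E_1\boxtimes E_2$ with $E_i$ $\sigma_i$-stable is $\sigma$-stable. I would prove this by a Künneth Hom computation: for a destabilizing subobject $F\to E_1\boxtimes E_2$, apply the projections, or use that $\Hom(E_1\boxtimes E_2,-)$ factors. A cleaner route is to note that such products are exceptional or rigid objects lying in a Kronecker$\otimes$Kronecker representation category, whose stability can be checked via King's criterion using the explicit dimension vectors.

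\textbf{Geometric factors and the $\bC$-quotient.} When a factor is geometric, we instead tilt, using the Arcara--Miles/Bridgeland-type constructions on surfaces. Alternatively, we deform from the algebraic case using the local homeomorphism $\Stab\to\Hom(K_0,\bC)$: the family of product central charges is continuous in $(\overline{\sigma_1},\overline{\sigma_2})$ and the support property is preserved, so we extend along paths. Finally, well-definedness on $\bC$-quotients holds since $\lambda\cdot\sigma_1\boxtimes\mu\cdot\sigma_2=(\lambda+\mu)\cdot(\sigma_1\boxtimes\sigma_2)$.

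\textbf{Uniqueness and injectivity.} Uniqueness follows because condition (ii) forces the phases of the generating exceptional products, hence the heart up to the $\bC$-action. Injectivity follows by recovering $Z_i$ up to scalars from ratios $Z(E\boxtimes F)/Z(E'\boxtimes F)$.

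\textbf{Elliptic curves.} Here $\Stab\Db(X_i)/\bC\cong\bH$, acting by $\GL^+(2,\bR)$ on $\sigma_{\mathrm{std}}$. Stable objects are semistable bundles, and $X_1\times X_2$ is an abelian surface, where $\Stab$ has a known component $\Stab^\dagger$ (Bridgeland K3/abelian) and $\boxtimes$ of semihomogeneous bundles is semihomogeneous and thus slope-stable-like. I would construct $\sigma_1\boxtimes\sigma_2$ as a geometric stability condition $\sigma_{\beta,\omega}$ with $Z=-\int e^{-(\beta+i\omega)}\ch$ of the form $Z_1\cdot Z_2$ on $N(X_1)\otimes N(X_2)$. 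This requires matching: $Z_i=-\deg+\tau_i\rank$ multiplied gives a quadratic form on $\rank,\ch_1$-components, $\ch_2$. We then check that it satisfies the positivity required to lie in the known parametrization of $\Stab^\dagger$ (nondegeneracy on the $(-2)$-free lattice of an abelian surface, so no spherical obstruction). Stability of $E_1\boxtimes E_2$ then uses that semihomogeneous bundles are stable for all $\sigma_{\beta,\omega}$ (Yoshioka/Bridgeland stability of semihomogeneous bundles).

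\textbf{Main obstacle.} The hardest step is (ii) in the $\bP^1\times\bP^1$ case: proving stability of products, and, for geometric factors, that skyscraper-type products $\cO_p\boxtimes\cO(m)$ remain stable. This is where I expect the heaviest work, probably via a wall-crossing/continuity argument from the algebraic chambers.
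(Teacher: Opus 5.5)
Your proposal has genuine gaps in the algebraic case, in injectivity, and in every case with a geometric factor.

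\textbf{The algebraic case and injectivity.} Your algebraic construction fails in general. Let $g_i=\phi_{\sigma_i}(\cO(k_i+1))-\phi_{\sigma_i}(\cO(k_i))$. This quantity is $\ge1$, is invariant under the $\bC$-action, and can take any such value. Your normalization $\cA_i=\langle\cO(k_i+1),\cO(k_i)[1]\rangle$ already requires $g_i<2$. Worse, by (ii) the simples $\cO(k_1+1,k_2+1)$ and $\cO(k_1,k_2)[2]$ of your product heart must have phases differing by $g_1+g_2-2$. So once $g_1+g_2\ge3$, the four prescribed phases fit in no window $(\phi,\phi+1]$, and correspondingly no rotation puts all four values $Z_1(S^{(1)}_j)Z_2(S^{(2)}_l)$ in $\bH$. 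Your justification overlooks that the spreads of the two factors add. The Kronecker$\otimes$Kronecker module heart is then simply not the heart of $\sigma_1\boxtimes\sigma_2$. The missing idea is to shift each $\cL_I=\cO(k_1+i_1,k_2+i_2)$ separately, by $p_I=-\lceil\phi_I-1\rceil$ with $\phi_I$ its prescribed phase. The inequality $g_i\ge1$ is exactly what makes $\{\cL_I[p_I]\}$ Ext-exceptional, and Macr\`i's lemma then gives existence and uniqueness at once. Your injectivity step is also insufficient. The ratios $Z(E\boxtimes F)/Z(E'\boxtimes F)$ recover $Z_i$ only up to $\bC^\times$, but $\Stab\Db(\bP^1)/\bC\to\bP(\Hom(\Lambda,\bC))$ is not injective. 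For example, two points of $\Stab^\Alg_k\Db(\bP^1)$ with equal mass ratio and gaps $g$ and $g+2$ have proportional central charges yet lie in different orbits. You must also use phase differences of stable products, as the paper does.

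\textbf{Cases with a geometric factor.} These are where the real work lies, and the proposal does not supply it. With one geometric and one algebraic factor, no tilt-type (geometric) construction can be the answer, because such a product-type condition is never geometric. Indeed, (ii) makes $\cO_x\boxtimes\cO(k_2)$ and $\cO_x\boxtimes\cO(k_2+1)$ stable with phase gap $g_2\ge1$. The non-split triangle $\cO_x\boxtimes\cO(k_2)\to\cO_x\boxtimes\cO(k_2+1)\to\cO_{(x,y)}$ then forces the skyscraper to be at best strictly semistable. Lifting paths of central charges is no substitute. A uniform support bound along the path, and the persistence of (ii) across the boundary between $\Stab^\Alg_k\Db(\bP^1)$ and $\Stab^\Geo\Db(\bP^1)$, are exactly what would need proof. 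The paper instead glues along $\langle\Db(\bP^1)\boxtimes\cO(k_2),\Db(\bP^1)\boxtimes\cO(k_2+1)\rangle$ via Collins--Polishchuk. It checks $\Hom(\cA_0,\cA_1[\le0])=0$ by phase estimates on the generators, again using $g_2\ge1$. It then shows the products are the only stable objects, which gives the support property.

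In the doubly geometric case, the step you flag as the main obstacle needs no wall-crossing. Write $\sigma_i=\sigma_{\tau_i}\cdot c_i$ and take $\sigma_{B,H}\cdot(c_1+c_2+\sqrt{-1}\pi)$ with $B+\sqrt{-1}H=\tau_1D_1+\tau_2D_2$, where $D_1,D_2$ are the fibre classes. The $\sqrt{-1}\pi$ absorbs the sign in $Z_{B,H}(E_1\boxtimes E_2)=-Z_1(E_1)Z_2(E_2)$. Line bundles are stable by Arcara--Miles. The sheaf $\iota_*\cO(k)=\cO(k)\boxtimes\cO_x$ lies in $\cT_{B,H}$, and any short exact sequence of it in $\cA_{B,H}$ consists of torsion sheaves. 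Its subobjects are therefore $\iota_*\cO(\ell)$ with $\ell<k$, which have smaller phase. Phases are then pinned down, since they agree modulo $2\bZ$ and lie in a common open interval of length at most $2$. Uniqueness there comes from geometric stability conditions being determined by $Z$ and the phase of skyscrapers; your exceptional-generator argument does not apply. Your elliptic outline is essentially the paper's route, but $E\boxtimes\cO_x$ is not a semihomogeneous bundle. The paper reduces such objects to the bundle case by a Fourier--Mukai transform, using that every stability condition on $X_1\times X_2$ is geometric.
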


\begin{rmk}
We conclude the Introduction with some further questions worth investigating:
\begin{enumerate}[label=(\roman*)]
    \item The converse of Theorem~\ref{thm:IntroMainThmSG}, namely, whether every special Lagrangian submanifold of $((\bC^\times)^n,\omega_n)$ with respect to the holomorphic $n$-form
$$
\Omega_{\mathbf{a},c}\coloneqq\exp\left(z_1+\cdots+z_n+c+\frac{e^{a_1}}{z_1}+\cdots+\frac{e^{a_n}}{z_n}\right)\frac{\dd z_1}{z_1}\cdots\frac{\dd z_n}{z_n}.
$$
    is $\sigma_{\mathbf{a},c}$-stable, is not known. A similar difficulty (for different reasons) also appears in a recent work of Stoppa \cite{Stoppa2508}, where under suitable conditions, Bridgeland stability implies the existence of a \emph{deformed Hermitian--Yang--Mills connection} (mirror to special Lagrangian condition near the large volume limit), but the converse is not known.

    In our setting, the simplest testing examples would be to take any $a\in\bC$ giving an algebraic stability condition on $H^0\Tw(\cW(\bC^\times,\fs))\cong\Db(\bP^1)$, and investigate special Lagrangian submanifolds of $((\bC^\times)^2,\omega_2)$ with respect to the holomorphic $2$-form
$$
\exp\left(z_1+z_2+\frac{e^{a}}{z_1}+\frac{e^{a}}{z_2}\right)\frac{\dd z_1}{z_1}\frac{\dd z_2}{z_2}.
$$
    \item In Remark~\ref{rmk:Diagonal_stable}, we note that for product-type stability conditions on $\bP^1\times\bP^1$ arising from the product of two \emph{geometric} stability conditions on $\bP^1$, there are stable objects which are not isomorphic to $E_1\boxtimes E_2$ for some $E_1,E_2\in\Db(\bP^1)$, unlike those constructed in Theorem~\ref{thm:IntroMainThmSG}.
    It would be interesting to see whether the mirror of such an object (which is not a product of two Lagrangians) is special Lagrangian with respect to, say,
$$
\exp\left(z_1+z_2+\frac{e^{b}}{z_1}+\frac{e^{b}}{z_2}\right)\frac{\dd z_1}{z_1}\frac{\dd z_2}{z_2},
$$
    where $b\in\bC$ gives a geometric stability condition on $H^0\Tw(\cW(\bC^\times,\fs))\cong\Db(\bP^1)$.
    \item Heuristics in Remark~\ref{rmk:motivation-product} suggest an embedding
$$
\bC^n\cong(\Stab\Db(\bP^1)/\bC)^n\hookrightarrow\Stab\Db((\bP^1)^n)/\bC
$$
    whose image consists of product-type stability conditions as in Definition~\ref{defn:product-type-intro}. The case $n=2$ is proved in Theorem~\ref{thm:embedding-P1-E}. For $n\geq3$, one would need to show the stability of line bundles $\cO(k_1,\ldots,k_n)$ with respect to the geometric stability conditions.
    \item More generally, it is natural to ask whether there always exists an embedding
$$
(\Stab\Db(X_1)/\bC)\times(\Stab\Db(X_2)/\bC)\hookrightarrow\Stab\Db(X_1\times X_2)/\bC
$$
    given by product-type stability conditions satisfying the conditions in Definition~\ref{defn:product-type-intro}, although the symplectic geometric heuristic in Remark~\ref{rmk:motivation-product} would no longer be valid.
\end{enumerate}
\end{rmk}

\medskip\noindent\textbf{Acknowledgment.}
The author would like to thank Fabian~Haiden, Hansol~Hong, Yu-Shen~Lin, Yat-Hin~Marco~Suen, and Peng~Zhou for helpful conversations and correspondences. 
The author would especially like to thank Jacopo~Stoppa for explaining his work and for valuable discussions.
The author was partially supported by the National Natural Science Foundation of China (Grant No.~12401053).


\section{Preliminaries.}
In Section~\ref{subsec:HMS}, we review the homological mirror symmetry equivalence between $\Db(\bP^1)$ and its mirror, and use the categorical K\"unneth formulae to extend it to the product $\Db((\bP^1)^n)$.
In Section~\ref{subsec:BSC}, we recall the definition of Bridgeland stability conditions, review facts on stability conditions on $\Db(\bP^1)$, and then discuss the constructions of stability conditions from exceptional collections and semiorthogonal decompositions.

\subsection{HMS for \texorpdfstring{$(\bP^1)^n$}{} and its mirror.}
\label{subsec:HMS}
We model the Fukaya--Seidel-type categories using the framework of partially wrapped Fukaya categories with stops. Given a Liouville manifold $X$ (an exact symplectic manifold with a cylindrical structure at infinity) and a closed subset $\fs \subseteq \partial_\infty X$ (the stop), the partially wrapped Fukaya category $\cW(X, \fs)$ is the $A_\infty$-category defined in the work of Ganatra, Pardon, and Shende \cite{GPS20, GPS}. Its objects are Lagrangians $L\subseteq X$ that are eventually conical and disjoint from $\mathfrak{s}$ at infinity; its morphism complexes are Floer cochains after wrapping Lagrangians in the complement of $\mathfrak{s}$. 
We refer the reader to \cite{GPS20, GPS} for further details and properties.

One of the simplest examples occurs when $X = \bC^\times \cong T^* S^1$. Taking $\fs$ to be the co-0-sphere over a fixed point $x \in S^1$, there is an equivalence \cite[Example 1.22]{GPS}:
$$
\cW(\bC^\times,\fs)\cong\Perf(\bP^1).
$$
By the K\"unneth formula for partially wrapped Fukaya categories \cite[Corollary 1.18]{GPS}, there is an equivalence
$$
\cW(\bC^\times, \fs)^{\otimes n} \cong \cW((\bC^\times)^n, \fs_n),
$$
where $\fs_n \subseteq \partial_\infty (\bC^\times)^n$ is the product stop (cf.~\cite[Section~1.2]{GPS}).
On the level of objects, this equivalence maps the tensor product $L_1 \otimes \cdots \otimes L_n$ to a cylindrization $\widetilde{L_1 \times \cdots \times L_n}$, which is a Lagrangian isotopic to the product $L_1 \times \cdots \times L_n$ (cf.~\cite[Section~8.3]{GPS}).

On the other hand, there is an equivalence of perfect complexes (cf.~\cite[Theorem~1.2]{BZFN}):
$$
\Perf(\bP^1)^{\otimes n} \cong \Perf((\bP^1)^n),
$$
determined by the external tensor product $E_1 \otimes \cdots \otimes E_n \mapsto E_1 \boxtimes \cdots \boxtimes E_n$.
Combining these results yields the following lemma.

\begin{lemma}
\label{lemma:Mirror-(P1^)n-H0TwW}
There exists an equivalence of triangulated categories
$$
\Mir_n \colon \Db((\bP^1)^n) \xrightarrow{\sim} H^0 \mathrm{Tw} \cW((\bC^\times)^n, \fs_n),
$$
such that the image $\Mir_n(\cO(k_1, \ldots, k_n))$ admits a representative which is isotopic to the product Lagrangian
$$
L_{k_1} \times \cdots \times L_{k_n} \subseteq (\bC^\times)^n,
$$
where each $L_{k_i}\subseteq\bC^\times$ denotes the Lagrangian mirror to the line bundle $\cO(k_i)$ on $\bP^1$.
\end{lemma}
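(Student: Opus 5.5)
We will build $\Mir_n$ by composing the three equivalences recalled just before the statement, working at the level of pretriangulated dg/$A_\infty$-categories and passing to $H^0\Tw$ only at the end. Write $\Phi\colon \cW(\bC^\times,\fs)\xrightarrow{\sim}\Perf(\bP^1)$ for the equivalence of \cite[Example~1.22]{GPS}, taken up to idempotent completion and twisted complexes. Under $\Phi$, the cotangent-fibre type Lagrangians are generators, and by definition each $L_k$ is a Lagrangian with $\Phi(L_k)\cong\cO(k)$. Such Lagrangians exist for every $k$: the generators correspond to $\cO$ and $\cO(1)$, and the remaining $\cO(k)$ are images of these under the autoequivalence $-\otimes\cO(1)$, which is mirror to the monodromy (wrapping) around the boundary of $\bC^\times$ relative to the stop. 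Applying $\Phi^{\otimes n}$ gives an equivalence $\cW(\bC^\times,\fs)^{\otimes n}\simeq\Perf(\bP^1)^{\otimes n}$, after taking the appropriate completion (twisted complexes, idempotent completion) of the tensor product of $A_\infty$-categories.

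Next we compose with the two Künneth equivalences. The first is $\Perf(\bP^1)^{\otimes n}\simeq\Perf((\bP^1)^n)$ of \cite[Theorem~1.2]{BZFN}, which sends $E_1\otimes\cdots\otimes E_n$ to $E_1\boxtimes\cdots\boxtimes E_n$. The second is $\cW(\bC^\times,\fs)^{\otimes n}\simeq\cW((\bC^\times)^n,\fs_n)$ of \cite[Corollary~1.18]{GPS}, which sends $L_1\otimes\cdots\otimes L_n$ to the cylindrization $\widetilde{L_1\times\cdots\times L_n}$. Define $\Mir_n$ as the composite
$$
\Perf((\bP^1)^n)\simeq\Perf(\bP^1)^{\otimes n}\xrightarrow{(\Phi^{-1})^{\otimes n}}\cW(\bC^\times,\fs)^{\otimes n}\simeq\cW((\bC^\times)^n,\fs_n),
$$
extended to twisted complexes. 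Taking $H^0$ gives an exact equivalence of triangulated categories. Since $(\bP^1)^n$ is smooth, $H^0\Perf((\bP^1)^n)=\Db((\bP^1)^n)$.

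For the object statement, we trace $\cO(k_1,\ldots,k_n)=\cO(k_1)\boxtimes\cdots\boxtimes\cO(k_n)$ through the composite. It goes first to $\cO(k_1)\otimes\cdots\otimes\cO(k_n)$, then to $L_{k_1}\otimes\cdots\otimes L_{k_n}$, and finally to $\widetilde{L_{k_1}\times\cdots\times L_{k_n}}$. By \cite[Section~8.3]{GPS} this cylindrization is isotopic to the product $L_{k_1}\times\cdots\times L_{k_n}$.

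The main obstacle is compatibility of models and completions. The three equivalences are stated in different frameworks: tensor products of $A_\infty$-categories in GPS, and presentable or perfect dg-categories in BZFN. We will therefore check that the tensor product in each Künneth statement agrees, after passing to $\Tw$ and idempotent completion, with the Morita tensor product of small pretriangulated categories. This identification also requires the categories involved to be idempotent complete. Here that holds, or idempotent completion is harmless, because the split-generators $\cO$ and $\cO(1)$ (respectively $L_0$ and $L_1$) form full exceptional collections. Hence their external products form a full exceptional collection of $\Db((\bP^1)^n)$, and the comparison reduces to checking that the induced functor is an equivalence on these generators.
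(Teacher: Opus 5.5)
Your proposal is correct and follows the paper's argument: like you, the paper obtains $\Mir_n$ by composing the mirror equivalence $\cW(\bC^\times,\fs)\cong\Perf(\bP^1)$ of \cite[Example~1.22]{GPS} with the two K\"unneth equivalences \cite[Corollary~1.18]{GPS} and \cite[Theorem~1.2]{BZFN}, and it reads off the object statement from the fact that $L_{k_1}\otimes\cdots\otimes L_{k_n}$ is sent to a cylindrization isotopic to $L_{k_1}\times\cdots\times L_{k_n}$. Your extra check that the different tensor products and completions agree is a detail the paper leaves implicit; it is harmless here because the line bundles $\cO(i_1,\ldots,i_n)$ with $i_\ell\in\{0,1\}$ form a full exceptional collection, so $\Tw$ is already idempotent complete.
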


\subsection{Bridgeland stability conditions.}
\label{subsec:BSC}

\subsubsection{Definition of Bridgeland stability conditions.}
We recall the definition of Bridgeland stability conditions and some of their basic properties.

Let $\cD$ be a triangulated category. Fix a free abelian group $\Lambda$ of finite rank, a group homomorphism $v\colon K_0(\cD)\rightarrow\Lambda$, and a norm $||\cdot||$ on $\Lambda\otimes_\bZ\bR$.
\begin{defn}[\cite{BriStab}]
A \emph{stability condition} $\sigma=(Z,\cP)$ on $\cD$ (with respect to $\Lambda$ and $v$) consists of:
\begin{itemize}
    \item a group homomorphism $Z\colon\Lambda\rightarrow\bC$, called the \emph{central charge}, and
    \item a collection $\cP=\{\cP(\phi)\}_{\phi\in\bR}$ of full additive subcategories of $\cD$, where nonzero objects in $\cP(\phi)$ are called \emph{semistable objects of phase $\phi$},
\end{itemize}
satisfying the following axioms:
\begin{enumerate}[label=(\roman*)]
    \item If $0\neq E\in \cP(\phi)$, then $Z(v(E))\in\bR_{>0}\cdot e^{i\pi\phi}$.
    \item For all $\phi\in\bR$, $\cP(\phi+1)=\cP(\phi)[1]$.
    \item If $\phi_1>\phi_2$ and $E_i\in \cP(\phi_i)$, then $\Hom(E_1,E_2)=0$.
    \item Every nonzero object $E\in\cD$ admits a Harder--Narasimhan filtration: a finite sequence of exact triangles
\begin{equation*}
\xymatrix{
0=E_0 \ar[r] & E_1\ar[d]  \ar[r] & E_2 \ar[r] \ar[d]& \cdots \ar[r]& E_{k-1}\ar[r] & E \ar[d]\\
                    & A_1 \ar@{-->}[lu]& A_2 \ar@{-->}[lu] &  & & A_k  \ar@{-->}[lu] 
}    
\end{equation*}
    where $A_i\in \cP(\phi)$ and $\phi_1>\cdots>\phi_k$.
    \item (Support property) There exists a constant $C>0$ such that 
    $$
    |Z(v(E))|\geq C\cdot||v(E)||
    $$
    holds for every semistable object $E\in \cup_\phi \cP(\phi)$.
\end{enumerate}
The set of such stability conditions is denoted by $\Stab_\Lambda(\cD)$.
\end{defn}

\begin{rmk}
Given a stability condition $\sigma=(Z,\cP)$, let
$$
\cA_\sigma \coloneqq\cP(0,1]
$$
denote the associated \emph{heart} (of a bounded t‑structure) on $\cD$.
It is the extension‑closed subcategory of $\cD$ generated by the semistable objects $\cP(\phi)$ with phases $\phi \in (0,1]$.

A stability condition on $\cD$ can be specified by a pair $(Z,\cA)$ consisting of a heart $\cA \subseteq \cD$ and a stability function $Z$ satisfying the Harder--Narasimhan property and the support property \cite[Proposition~5.3]{BriStab}. Hence we may denote the same stability condition by either $(Z,\cP)$ or $(Z,\cA)$.
\end{rmk}

The stability space carries a natural (generalized) metric \cite[Proposition~8.1]{BriStab}, inducing a natural topology. The main structural result of \cite{BriStab} is that $\Stab_\Lambda(\cD)$ forms a complex manifold \cite[Theorem~7.1]{BriStab}. Explicitly, the forgetful map
$$
\Stab_\Lambda(\cD)\rightarrow\Hom(\Lambda,\bC); \quad \sigma=(Z,\cP)\mapsto Z
$$
is a local homeomorphism when $\Hom(\Lambda,\bC)$ is given the linear topology. Therefore, $\Stab_\Lambda(\cD)$ is naturally a complex manifold of dimension $\rank(\Lambda)$.

The space $\Stab_\Lambda(\cD)$ carries natural group actions by autoequivalences of $\cD$ and by the universal cover $\widetilde{\GL^+(2,\bR)}$.
In the following, we will only use the action of the subgroup $\mathbb{C} \subseteq \widetilde{\GL^+(2,\mathbb{R})}$, defined as follows:
for $c\in\bC$,
$$
\sigma=(Z,\cP)\mapsto\sigma\cdot c\coloneqq\left(e^c\cdot Z, \cP'\right), \quad \text{ where } \cP'(\phi)=\cP\left(\phi-\frac{1}{\pi}\text{Im}(c)\right).
$$
The real part of $c$ scales the central charges, while the imaginary part shifts the phases.
This $\bC$-action is free on $\Stab_\Lambda(\cD)$.

\begin{rmk}
When $\cD =\Db(X)$ is the bounded derived category of coherent sheaves on a smooth projective variety $X$, the standard choice for $\Lambda$ is the numerical Grothendieck group $N(\Db(X))$. For $X$ a smooth projective curve, we have $N(\Db(X))=\bZ\oplus\bZ$, and the natural projection $K_0(\Db(X))\to N(\Db(X))$ sends each class to its rank and degree.
\end{rmk}

\subsubsection{Bridgeland stability conditions on \texorpdfstring{$\bP^1$}{} and its mirror.}
We recall the classification of stability conditions on $\Db(\mathbb{P}^1)$ due to Okada \cite{Okada}. Stability conditions on $\Db(\mathbb{P}^1)$ fall into two distinct classes:
\begin{enumerate}[label=(\alph*)]
    \item \emph{Geometric stability conditions:} those for which all line bundles and skyscraper sheaves are stable. The set of such stability conditions is denoted
    $$
    \Stab^\Geo\Db(\bP^1)\subseteq\Stab\Db(\bP^1).
    $$
    \item \emph{Algebraic stability conditions:} those for which, for some $k \in \mathbb{Z}$, only $\cO(k)$, $\cO(k+1)$, and their shifts are stable. For each $k \in \mathbb{Z}$ we write
    $$
    \Stab^\Alg_k\Db(\bP^1)\subseteq\Stab\Db(\bP^1).
    $$
    for the corresponding subset.
\end{enumerate}
Okada \cite{Okada} shows that there is a decomposition
$$
\Stab\Db(\bP^1)=\Stab^\Geo\Db(\bP^1)\coprod\left(\coprod_{k\in\bZ}\Stab^\Alg_k\Db(\bP^1)\right),
$$
and establishes the following isomorphisms:
$$
\Stab\Db(\bP^1)\cong\bC^2, \quad \Stab^\Geo\Db(\bP^1)\cong\bH\times\bC, \quad \Stab^\Alg_k\Db(\bP^1)\cong\overline\bH\times\bC.
$$
Quotienting by the free $\mathbb{C}$-actions yield:
$$
\Stab\Db(\bP^1)/\bC\cong\bC, \quad \Stab^\Geo\Db(\bP^1)/\bC\cong\bH, \quad \Stab^\Alg_k\Db(\bP^1)/\bC\cong\overline\bH.
$$

\begin{rmk}
A key distinction between the two types of stability conditions, which will be useful in what follows, concerns the phases of stable objects:
\begin{enumerate}[label=(\alph*)]
    \item If $\sigma\in\Stab^\Geo\Db(\bP^1)$, then
    $$
    \phi_\sigma(k(x))-1<\cdots < \phi_\sigma(\cO(-1))<\phi_\sigma(\cO)<\phi_\sigma(\cO(1))<\cdots<\phi_\sigma(k(x)).
    $$
    \item If $\sigma\in\Stab^\Alg_k\Db(\bP^1)$, then
    $$
    \phi_\sigma(\cO(k))+1\leq\phi_\sigma(\cO(k+1)).
    $$
\end{enumerate}
\end{rmk}

The work of Haiden–Katzarkov–Kontsevich \cite{HKK}, when specialized to the setting of Theorem~\ref{thm:intro-HKK}, provides a geometric interpretation of Okada's classification. Fix a homological mirror symmetry equivalence as in Section~\ref{subsec:HMS}:
$$
\Mir\colon\Db(\bP^1)\xrightarrow{\sim}H^0\Tw\cW(\bC^\times,\fs).
$$
Then \cite[Theorems 5.2 and 6.3]{HKK} (see also Theorem~\ref{thm:intro-HKK}) yields the following corollary.

\begin{cor}\label{cor:StabDbP^1viaHKK}
There exists an isomorphism
$$
\bC^2_{(a,c)}\xrightarrow{\sim}\Stab\Db(\bP^1); \quad
(a,c)\mapsto\sigma_{a,c}=(Z_{a,c},\cP_{a,c})
$$
with the following properties:
\begin{enumerate}[label=(\roman*)]
    \item The central charge $Z_{a,c}$ is given by the integral
$$
Z_{a,c}(E)=\int_{[\Mir(E)]}\exp\left(z+c+\frac{e^a}{z}\right)\frac{\dd z}{z}.
$$
    \item If $E$ is $\sigma_{a,c}$-stable, then $\Mir(E)$ admits a special Lagrangian representative with respect to the holomorphic $1$-form
$$
\Omega_{a,c}\coloneqq\exp\left(z+c+\frac{e^a}{z}\right)\frac{\dd z}{z}.
$$
\end{enumerate}
\end{cor}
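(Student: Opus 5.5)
The plan is to transport Theorem~\ref{thm:intro-HKK} along the equivalence $\Mir$. By construction, $\Mir\colon\Db(\bP^1)\xrightarrow{\sim}H^0\Tw\cW(\bC^\times,\fs)$ is an exact equivalence of triangulated categories. Exact equivalences induce isomorphisms of stability spaces: for $\sigma=(Z,\cP)$ on the target, the pullback $\Mir^*\sigma\coloneqq(Z\circ\Mir_*,\ \Mir^{-1}\cP)$ is a stability condition on $\Db(\bP^1)$. Here $\Mir_*$ is the induced map on $K_0$ and $(\Mir^{-1}\cP)(\phi)$ is the full subcategory of objects $E$ with $\Mir(E)\in\cP(\phi)$.

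The steps, in order, are as follows.

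\begin{enumerate}[label=(\arabic*)]
    \item \textbf{Check the axioms.} Axioms (i)--(iv) transfer verbatim. Exact functors preserve shifts, Hom-vanishing and exact triangles, so Harder--Narasimhan filtrations pull back to Harder--Narasimhan filtrations.
    \item \textbf{Match lattices and the support property.} This is the only point needing a little care. On the $\bP^1$ side one uses $\Lambda=N(\Db(\bP^1))=\bZ^2$ (rank, degree). On the Fukaya side, \cite{HKK} uses the lattice generated by classes of Lagrangians, namely $H_1$ of $\bC^\times$ relative to the stop, which has rank $2$. One checks that $\Mir_*$ induces an isomorphism between these two rank-$2$ lattices. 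Both groups are free of rank $2$ on $K_0$ classes: on the $\bP^1$ side this is generated by the classes of $\cO$ and $\cO(1)$, and on the mirror side by the classes of the corresponding thimbles. Since the lattices match and the norm can be transported, the support property with constant $C$ transfers unchanged. The map $\sigma\mapsto\Mir^*\sigma$ is then a bijection with inverse $(\Mir^{-1})^*$, and it is an isometry for Bridgeland's metric, hence a homeomorphism.
    \item \textbf{Compose with HKK.} Composing with the isomorphism $\bC^2_{(a,c)}\xrightarrow{\sim}\Stab H^0\Tw(\cW(\bC^\times,\fs))$ of Theorem~\ref{thm:intro-HKK} gives the isomorphism $(a,c)\mapsto\sigma_{a,c}\coloneqq\Mir^*\sigma^{\mathrm{HKK}}_{a,c}$.
    \item \textbf{Property (i).} This follows by definition: $Z_{a,c}(E)=Z^{\mathrm{HKK}}_{a,c}(\Mir(E))$, which is the stated oscillatory integral over $[\Mir(E)]$.
    \item \textbf{Property (ii).} If $E$ is $\sigma_{a,c}$-stable, then $\Mir(E)$ is $\sigma^{\mathrm{HKK}}_{a,c}$-stable, since stability (simplicity in $\cP(\phi)$) is preserved by equivalences. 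A stable object is indecomposable and semistable, so Theorem~\ref{thm:intro-HKK}(ii) provides a special Lagrangian representative with respect to $\Omega_{a,c}$.
\end{enumerate}

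The main obstacle is the bookkeeping in step (2): verifying that the lattice and charge map used by \cite{HKK} correspond to the numerical Grothendieck group under $\Mir$. Everything else is formal.
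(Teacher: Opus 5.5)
Your proposal is correct and is essentially the paper's argument: the paper deduces the corollary directly by transporting Theorem~\ref{thm:intro-HKK} (i.e.\ \cite[Theorems~5.2 and~6.3]{HKK}) along the fixed equivalence $\Mir$, which is exactly your steps (3)--(5), and your steps (1)--(2) spell out routine bookkeeping that the paper leaves implicit. One small imprecision: in the partially wrapped setting, classes of Lagrangians live in homology relative to the part of the boundary at infinity away from the stop (where $\Omega_{a,c}$ decays), not relative to the stop itself; this does not affect your argument, since you identify all the lattices through $K_0(\Db(\bP^1))\cong N(\Db(\bP^1))\cong\bZ^2$.
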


\begin{rmk}
\label{rmk:slice-C-Stab(P1)}
Some comments on Corollary~\ref{cor:StabDbP^1viaHKK}:
\begin{enumerate}[label=(\alph*)]
    \item Regarding the parameters $(a, c) \in \bC^2$: The parameter $c$ corresponds to the free $\mathbb{C}$-action on $\Stab\Db(\mathbb{P}^1)$; varying $c$ does not change the set of stable objects. 

    The parameter $a$ carries the essential stability data. The monodromy of the term $e^{a}$ reflects the autoequivalence $(-\otimes\cO(1))$ on $\Db(\mathbb{P}^1)$.

    \item\label{item:slice-C-Stab(P1)} Setting $c=0$ gives a slice of the free $\mathbb{C}$-action, which we denote
    $$
    \Stab^{c=0}\Db(\bP^1)\subseteq\Stab\Db(\bP^1).
    $$ 
    The composition
    $$
    \Stab^{c=0}\Db(\bP^1) \hookrightarrow \Stab\Db(\bP^1) \to \Stab\Db(\bP^1)/\bC
    $$
    is an isomorphism.
\end{enumerate}
\end{rmk}

\subsubsection{Stability conditions from exceptional collections.}
\label{subsubsection:StabFromExcep-Macri}
We recall the construction of stability conditions from exceptional collections, following \cite[Section~3.3]{Macri}.

An object $E$ in a triangulated category $\cD$ is called \emph{exceptional} if
$$
\Hom(E,E[k])=
\begin{cases}
    0 & \text{ if } k\neq0, \\
    \bC & \text{ if } k=0.
\end{cases}
$$
An ordered set $\cE = \{E_1, \ldots, E_n\}$ is an \emph{exceptional collection} if each $E_i$ is exceptional and
$$
\Hom(E_i,E_j[k])=0 \quad \text{ for all } i>j \text{ and all } k\in\bZ.
$$

\begin{defn}[\cite{Macri}]
Let $\cE=\{E_1, \ldots, E_n\}$ be an exceptional collection in $\cD$.
\begin{enumerate}[label=(\alph*)]
    \item $\cE$ is \emph{strong} if $\Hom(E_i,E_j[k])=0$ for all $i,j$ and all $k\neq0$.
    \item $\cE$ is \emph{full} (or \emph{complete}) if it generates $\cD$ under extensions and shifts.
    \item $\cE$ is \emph{$\text{Ext}$-exceptional} if $\Hom(E_i,E_j[k])=0$ for all $i\neq j$ and all $k\leq0$.
\end{enumerate}
\end{defn}

\begin{lemma}[{\cite[Section~3.3]{Macri}}]
\label{lemma:Macri}
Let $\cE=\{E_1,\ldots,E_n\}$ be a full Ext-exceptional collection in $\cD$. Then:
\begin{enumerate}[label=(\alph*)]
    \item The extension-closed subcategory $\left<E_1,\ldots,E_n\right>$ is the heart of a bounded t-structure on $\cD$.
    \item For any choice of $z_1,\ldots,z_n\in\bH\cup\bR_{<0}$, there exists a unique stability condition $\sigma$ on $\cD$ such that each $E_i$ is $\sigma$-stable, $Z(E_i)=z_i$, and $0<\phi_\sigma(E_i)\leq1$.
\end{enumerate}
\end{lemma}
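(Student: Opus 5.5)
The statement to prove is Lemma~\ref{lemma:Macri}. The plan is to first show that the extension closure $\cA\coloneqq\left<E_1,\ldots,E_n\right>$ is a heart, and then to build the stability condition on $\cA$ using \cite[Proposition~5.3]{BriStab}.

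\textbf{Part (a).} I will argue by induction on $n$, gluing hearts along the semiorthogonal decomposition $\cD=\left<\cD_1,\cD_2\right>$. Here $\cD_2$ is generated by $E_n$ and $\cD_1$ by $E_1,\ldots,E_{n-1}$. The decomposition exists since $\cE$ is full and exceptional, and the ordering convention gives $\Hom(\cD_2,\cD_1)=0$.

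Since $E_n$ is exceptional, $\cD_2\simeq\Db(\mathrm{Vect})$. Its standard heart is $\left<E_n\right>$, the finite direct sums of copies of $E_n$.

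The gluing criterion of Beilinson--Bernstein--Deligne (as used by Macr\`i) requires $\Hom^{\leq0}(\cA_1,\cA_2)=0$ between the two hearts. This reduces to $\Hom(E_i,E_n[k])=0$ for $k\leq0$ and $i<n$, which is exactly the Ext-exceptional hypothesis. It then remains to check that the glued heart is precisely the extension closure of all the $E_i$.

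An alternative, more direct route is to verify the two axioms of a bounded heart directly. First, $\Hom(\cA,\cA[k])=0$ for $k<0$, which is immediate from Ext-exceptionality and exceptionality. Second, every object has a filtration with factors in shifts $\cA[k]$ of decreasing degrees, which I would obtain by iterating the semiorthogonal truncation triangles.

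\textbf{Part (b).} The key observation is that $\cA$ is a finite-length abelian category whose simple objects are exactly $E_1,\ldots,E_n$.

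To show each $E_i$ is simple, suppose $A\hookrightarrow E_i$ is a subobject in $\cA$. Then $A$ is an iterated extension of the $E_j$, so some $E_j$ maps nontrivially to $E_i$. By Ext-exceptionality this forces $j=i$, and since $\Hom(E_i,E_i)=\bC$ the map is an isomorphism.

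Each object then has a Jordan--H\"older filtration by the $E_i$, so $[E_i]$ form a basis of $K_0(\cA)=K_0(\cD)\cong\bZ^n$. Consequently, prescribing $Z(E_i)=z_i\in\bH\cup\bR_{<0}$ defines a unique homomorphism $Z$, and $Z$ is a stability function on $\cA$ because every nonzero object is a positive combination of the $E_i$.

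The remaining axioms are routine:
\begin{itemize}
\item Harder--Narasimhan filtrations exist because $\cA$ has finite length (Bridgeland's criterion for Artinian and Noetherian hearts).
\item The support property follows from the finiteness of the collection of simples, with the standard caveat about the choice of lattice.
\item Each simple object is stable, and has phase in $(0,1]$ by the choice of $z_i$.
\end{itemize}

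\textbf{Uniqueness.} Any stability condition in which all $E_i$ are stable with phases in $(0,1]$ has heart $\cP(0,1]$ containing every $E_i$. Since the heart is extension-closed, it contains $\cA$. Two bounded hearts with one contained in the other must coincide, so the heart equals $\cA$. The central charge is then determined by the values $z_i$.

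The main obstacle I expect is part (a): showing that the extension closure is actually the full heart. In particular, one must check that every object of $\cD$ admits the required cohomological filtration, not merely that $\cA$ is contained in some heart. The gluing induction is where I would be most careful about the direction of the semiorthogonality and the sign conventions on $k\leq0$.
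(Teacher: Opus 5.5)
Your proposal is correct and follows the standard argument of \cite{Macri}, which the paper cites without giving a proof of its own: glue the hearts $\left<E_1,\ldots,E_{n-1}\right>$ and $\left<E_n\right>$ inductively using $\Hom^{\leq0}(E_i,E_n)=0$, then use that the resulting heart is of finite length with simple objects exactly $E_1,\ldots,E_n$ to construct the stability condition and show it is unique. The step you left open is immediate: every object of the glued heart is an extension of an object of $\left<E_1,\ldots,E_{n-1}\right>$ by an object of $\left<E_n\right>$, and conversely the glued heart is extension-closed and contains every $E_i$. The support property holds because the finitely many $z_i$ lie in a closed sector of angle less than $\pi$.
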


\subsubsection{Stability conditions from semiorthogonal decompositions.}
\label{subsubsec:stab-from-SOD}
We recall the gluing construction of stability conditions along a semiorthogonal decomposition, following \cite{CPgluing}. This generalizes the method of constructing stability conditions from exceptional collections described in Section~\ref{subsubsection:StabFromExcep-Macri}.

Let $\cD=\left<\cD_1,\cD_2\right>$ be a \emph{semiorthogonal decomposition}, i.e.~$\cD_1$ and $\cD_2$ are triangulated subcategories of $\cD$ satisfying $\Hom(E_2, E_1) = 0$ for all $E_1 \in \cD_1$, $E_2 \in \cD_2$, and every object $E \in \cD$ fits into an exact triangle
$$
E_2\to E\to E_1\to E_2[1]
$$
with $E_1\in\cD_1$ and $E_2\in\cD_2$.
The objects $E_1$ and $E_2$ in this triangle are functorial in $E$: $E_1 = \lambda_1(E)$, where $\lambda_1$ is the left adjoint to the inclusion $\cD_1 \hookrightarrow \cD$, and $E_2 = \rho_2(E)$, where $\rho_2$ is the right adjoint to the inclusion $\cD_2 \hookrightarrow \cD$.

We first recall how hearts glue across a semiorthogonal decomposition.

\begin{lemma}[{\cite[Lemma~2.1]{CPgluing}}]
\label{lemma:CP-gluing-heart}
Let $\cD=\left<\cD_1,\cD_2\right>$ be a semiorthogonal decomposition, and let $\cA_i \subseteq \cD_i$ be hearts such that
$$
\Hom\left(\cA_1,\cA_2[\leq0]\right)=0.
$$
Then the subcategory
$$
\cA=\{E\in\cD:\lambda_1(E)\in\cA_1,\rho_2(E)\in\cA_2\}
$$
is the heart of a bounded $t$-structure on $\cD$.
Moreover, every object $E \in \cA$ sits in a short exact sequence in $\cA$
$$
0\to\rho_2(E)\to E\to\lambda_1(E)\to0,
$$
where $\lambda_1(E)\in\cA_1$ and $\rho_2(E)\in\cA_2$.
\end{lemma}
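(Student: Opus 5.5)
The plan is to prove directly that $\cA$ is a heart. I will use the standard criterion: a full additive subcategory $\cA\subseteq\cD$ is the heart of a bounded t-structure if and only if two conditions hold. First, $\Hom(\cA[k_1],\cA[k_2])=0$ for all $k_1>k_2$. Second, every nonzero object $E\in\cD$ admits a finite filtration $0=E_0\to E_1\to\cdots\to E_m=E$ whose cones are $A_j[k_j]$ with $A_j\in\cA$ and $k_1>k_2>\cdots>k_m$. Throughout, fix the bounded t-structures on $\cD_1,\cD_2$ with hearts $\cA_1,\cA_2$. I also use that $\lambda_1$ and $\rho_2$ are exact, that $\lambda_1|_{\cD_2}=0$ and $\lambda_1|_{\cD_1}=\mathrm{id}$, and that $\rho_2|_{\cD_1}=0$ and $\rho_2|_{\cD_2}=\mathrm{id}$.

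\emph{Step 1: describe $\cA$ and settle the last assertion.} I will show that $\cA$ coincides with the extension closure of $\cA_1\cup\cA_2$. If $E\in\cA$, the triangle $\rho_2(E)\to E\to\lambda_1(E)$ exhibits $E$ as an extension of an object of $\cA_1$ by an object of $\cA_2$. Conversely, $\cA_1,\cA_2\subseteq\cA$. Moreover, $\cA$ is closed under extensions: applying the exact functor $\lambda_1$ (resp.\ $\rho_2$) to a triangle $E'\to E\to E''$ with $E',E''\in\cA$ gives a triangle in $\cD_1$ (resp.\ $\cD_2$) whose outer terms lie in the heart $\cA_1$ (resp.\ $\cA_2$), and hearts are extension closed. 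Once $\cA$ is known to be a heart, the triangle $\rho_2(E)\to E\to\lambda_1(E)\to$ has all three terms in $\cA$, so it is a short exact sequence in $\cA$. This gives the ``moreover'' statement.

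\emph{Step 2: vanishing of negative Exts.} Let $E,F\in\cA$ and $k>0$; I need $\Hom(E[k],F)=0$. By the dévissage in Step 1 and the long exact sequences of $\Hom$, it suffices to treat $E,F\in\cA_1\cup\cA_2$. There are four cases:
\begin{itemize}
\item $\Hom(\cA_i[k],\cA_i)=0$ for $i=1,2$, because $\cA_i$ is a heart.
\item $\Hom(\cA_2[k],\cA_1)=0$, by semiorthogonality.
\item $\Hom(\cA_1[k],\cA_2)=\Hom(\cA_1,\cA_2[-k])=0$, by the hypothesis $\Hom(\cA_1,\cA_2[\leq0])=0$.
\end{itemize}

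\emph{Step 3: existence of filtrations (the main obstacle).} Let $E\in\cD$. Splicing the cohomology filtrations of $\rho_2(E)$ in $\cD_2$ and of $\lambda_1(E)$ in $\cD_1$ along the triangle $\rho_2(E)\to E\to\lambda_1(E)$ gives a filtration of $E$. Its factors are $B_1[n_1],\ldots,B_r[n_r]$ with $B_j\in\cA_2$, followed by $C_1[p_1],\ldots,C_s[p_s]$ with $C_j\in\cA_1$, and the shifts decrease within each block. The shifts need not decrease across the two blocks, so I will rearrange by a reordering lemma. Consider a sub-extension $A_2[m]\to G\to A_1[n]$ with $A_i\in\cA_i$ and $n>m$. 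Its connecting morphism lies in
\[
\Hom(A_1[n],A_2[m+1])=\Hom(A_1,A_2[m+1-n]),
\]
and $m+1-n\leq 0$, so this group vanishes by hypothesis. Hence the extension splits, and the two factors can be swapped in the filtration. If instead $n=m$, the two factors merge into one factor $G[m]$ with $G\in\cA$, by Step 1. Performing these swaps and merges like a bubble sort, by induction on the number of inversions, produces a filtration whose factors lie in $\cA[k]$ with strictly decreasing $k$. The delicate point is carrying out the swap inside a longer filtration: one has to check that splitting a two-step subquotient really modifies the filtration of $E$ (via the octahedral axiom) without disturbing the other factors. This is where I expect to spend most care. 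Steps 2 and 3 together give the criterion, so $\cA$ is a heart, and it is bounded because each filtration is finite.
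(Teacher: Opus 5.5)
The paper gives no argument of its own for this lemma; it is quoted from \cite[Lemma~2.1]{CPgluing}, which is a statement about gluing $t$-structures (a semiorthogonal variant of Beilinson--Bernstein--Deligne gluing). Your proof is correct, and it takes a different route.

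\textbf{The $t$-structure route.} One shows that $\{E:\lambda_1(E)\in\cD_1^{\le 0},\ \rho_2(E)\in\cD_2^{\le 0}\}$ is an aisle. Its truncation triangles are built from the truncations in $\cD_1$ and $\cD_2$. There the hypothesis, in the form $\Hom(\cD_1^{\le 0},\cD_2^{\ge 0})=0$, is what allows the truncation of $\lambda_1(E)$ to be lifted across the extension by $\rho_2(E)$.

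\textbf{Your route.} You work purely with hearts via Bridgeland's criterion \cite[Lemma~3.2]{BriStab}. Step~1 identifies $\cA$ with the extension closure of $\cA_1\cup\cA_2$, Step~2 gives the Hom-vanishing, and Step~3 reorders the spliced cohomology filtration. This makes the role of the hypothesis transparent. The part $\Hom(\cA_1,\cA_2[<0])=0$ is exactly what Step~2 uses, while $\Hom(\cA_1,\cA_2)=0$ enters only in the swap with $n=m+1$. It is also the same reordering device the paper itself uses in Propositions~\ref{prop:existence-case1} and~\ref{prop:GeoAlg-EUa}.

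\textbf{Trade-off.} The $t$-structure approach produces the aisles and truncation functors explicitly. It also does not need the $t$-structures on $\cD_i$ to be bounded. Your Step~3 does need boundedness to get finite filtrations, which is harmless here since the paper's hearts are bounded.

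\textbf{The step you flag as delicate is routine.} Suppose $E_{i-1}\to E_i\to E_{i+1}$ has factors $A_2[m]$ then $A_1[n]$, and the cone $G$ of $E_{i-1}\to E_{i+1}$ splits as $A_2[m]\oplus A_1[n]$. Replace $E_i$ by the cocone of the composite $E_{i+1}\to G\to A_2[m]$. The octahedral axiom shows the new factors are $A_1[n]$ then $A_2[m]$, and the rest of the filtration is untouched.

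\textbf{Ordering of swaps and merges.} The sort never changes the relative order of factors within either block. Hence every inverted adjacent pair really is of the form $(A_2[m],A_1[n])$ with $n>m$. It is cleanest to merge equal shifts only after sorting is finished, so that your swap is only ever applied to factors coming from $\cA_1$ or $\cA_2$.
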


The next proposition provides a criterion for the glued heart and central charge to satisfy the Harder--Narasimhan property.

\begin{prop}[{\cite[Proposition~3.5(a)]{CPgluing}}]
\label{prop:CP-gluing-stab-HN}
Let $\cD = \langle \cD_1, \cD_2 \rangle$ be a semiorthogonal decomposition, and let $\sigma_i = (Z_i, \cA_i)$ be stability conditions on $\cD_i$ ($i=1,2$).
Assume that 
$$
\Hom\left(\cA_1,\cA_2[\leq0]\right)=0.
$$
Let $\cA \subseteq \cD$ be the heart obtained by gluing $\cA_1$ and $\cA_2$ as in Lemma~\ref{lemma:CP-gluing-heart}, and define a stability function on $\cA$ by
$$
Z(E)\coloneqq Z_1(\lambda_1(E))+Z_2(\rho_2(E)).
$$
Assume in addition that $0$ is an isolated point of $\text{Im}Z_i(\cA_i)\subseteq\bR_{\geq0}$ for $i=1,2$.
Then $(Z,\cA)$ satisfies the Harder--Narasimhan property.
\end{prop}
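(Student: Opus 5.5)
The proof is the Harder--Narasimhan argument of Collins--Polishchuk, and I will reduce it to the standard discreteness criterion. Bridgeland's criterion \cite[Proposition~2.4]{BriStab} says: a stability function on a heart satisfies the HN property if the heart admits no infinite chains of subobjects $\cdots\subset E_{j+1}\subset E_j\subset\cdots\subset E_1=E$ with $\phi(E_{j+1})>\phi(E_j/E_{j+1})$, and no infinite chains of quotients $E\twoheadrightarrow F_1\twoheadrightarrow F_2\twoheadrightarrow\cdots$ with $\phi(K_j)>\phi(F_{j+1})$, where $K_j$ is the kernel of $F_j\to F_{j+1}$. So I would verify these two chain conditions for $(Z,\cA)$, where $\cA$ is the glued heart of Lemma~\ref{lemma:CP-gluing-heart} and $Z$ is the glued function.

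First, $Z$ is a stability function on $\cA$. For $0\neq E\in\cA$ there is a short exact sequence $0\to\rho_2E\to E\to\lambda_1E\to0$ with $\rho_2E\in\cA_2$ and $\lambda_1E\in\cA_1$, not both zero. Each $Z_i$ maps nonzero objects of $\cA_i$ into $\bH\cup\bR_{<0}$, so the sum $Z(E)$ does too.

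Next, the functors. Because of the vanishing $\Hom(\cA_1,\cA_2[\le0])=0$, both $\lambda_1$ and $\rho_2$ are t-exact for the glued t-structure. Hence they send short exact sequences in $\cA$ to short exact sequences in $\cA_1$ and $\cA_2$. It follows that $\operatorname{Im}Z$ is additive on exact sequences, and $\operatorname{Im}Z(\cA)\subseteq\operatorname{Im}Z_1(\cA_1)+\operatorname{Im}Z_2(\cA_2)$. This image is a subset of $\bR_{\ge0}$ in which $0$ is isolated, so any value in $(0,\epsilon)$ is impossible.

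Now take a bad chain, say of quotients. Since $\operatorname{Im}Z$ is nonnegative and additive, $\operatorname{Im}Z(F_j)$ is non-increasing. By the discreteness near $0$ just established, the image sequences of $\operatorname{Im}Z(\lambda_1F_j)$ and $\operatorname{Im}Z(\rho_2F_j)$ can drop only finitely often. So I may assume $\operatorname{Im}Z(K_j)=0$ for all $j$, i.e. $\lambda_1K_j$ and $\rho_2K_j$ both lie in $\cP_i(1)$.

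For such $j$, the phase inequality $\phi(K_j)>\phi(F_{j+1})$ forces $\phi(K_j)=1$, so $\operatorname{Re}Z(K_j)<0$. Therefore $\operatorname{Re}Z(F_j)$ strictly increases along the chain while $\operatorname{Im}Z(F_j)$ stays constant. To rule this out I would project the chain to $\cA_1$ via $\lambda_1$, where it becomes a chain of quotients, and to $\cA_2$ via $\rho_2$, where it becomes a chain of subobjects of images. Then I would use the HN property in each $\cA_i$, applied to the finitely many HN factors, to show that the real parts are bounded. With $\operatorname{Re}Z(F_j)$ increasing and bounded, the objects must stabilize, contradicting the infinite chain. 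The subobject chain case is dual.

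\textbf{Main obstacle.} The step I expect to be the main obstacle is this last boundedness argument: controlling the real parts when the imaginary parts stay constant. One way is to use that $\cP_i(1)$ is of finite length when $0$ is isolated; this is Bridgeland's argument for locally finite stability conditions. Alternatively, I would invoke Noetherianity of the glued heart, obtained from the corresponding property of each $\cA_i$, as Collins--Polishchuk do.
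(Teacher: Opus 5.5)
The paper gives no proof of this statement; it only quotes \cite[Proposition~3.5(a)]{CPgluing}. Your reduction is sound up to the last step. Bridgeland's chain criterion, the exactness of $\lambda_1,\rho_2\colon\cA\to\cA_i$ (immediate from the definition of $\cA$), the isolatedness of $0$ in $\text{Im}Z(\cA)$, and the reduction to kernels with $\lambda_1K_j\in\cP_1(1)$ and $\rho_2K_j\in\cP_2(1)$ are all correct. Two small corrections. First, $\rho_2$ also turns the chain into a chain of quotients, not subobjects. Second, the subobject case is not dual but immediate: once $\text{Im}Z(E_j/E_{j+1})=0$, the requirement $\phi(E_{j+1})>1$ is impossible.

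The genuine gap is the sentence ``with $\text{Re}Z(F_j)$ increasing and bounded, the objects must stabilize''. A strictly increasing bounded sequence of reals need not be eventually constant. Nothing you have used prevents the increments $-\text{Re}Z(K_j)=|Z_1(\lambda_1K_j)|+|Z_2(\rho_2K_j)|$ from tending to $0$. The hypothesis on $\text{Im}Z_i$ cannot help here, because $\text{Im}Z_i$ vanishes identically on $\cP_i(1)$. So your remark that $\cP_i(1)$ has finite length ``when $0$ is isolated'' is misattributed. Likewise, Noetherianity of $\cA_i$ is not assumed, and deriving it needs the same missing input.

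What is missing is a constant $c>0$ with $|Z_i(E)|\ge c$ for every nonzero $E\in\cP_i(1)$. This comes from the support property in the paper's definition. Such an $E$ is semistable with $v(E)\neq0$, so $|Z_i(E)|\ge C_i\|v(E)\|\ge C_i\delta_i$, where $\delta_i>0$ is the minimal norm of a nonzero lattice vector. This is the `reasonable' condition used by Collins--Polishchuk.

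With this bound you do not even need a bound on real parts:
\begin{enumerate}[label=(\roman*)]
\item Set $M_J\coloneqq\ker(\lambda_1F_1\twoheadrightarrow\lambda_1F_J)$. It is an iterated extension of $\lambda_1K_1,\ldots,\lambda_1K_{J-1}$, hence lies in $\cP_1(1)$.
\item So $M_J$ sits inside the phase-$1$ HN factor $G^{(1)}$ of $\lambda_1F_1$, with quotient again in $\cP_1(1)$.
\item Hence $\sum_{j<J}|Z_1(\lambda_1K_j)|=|Z_1(M_J)|\le|Z_1(G^{(1)})|$ for all $J$, so at most $|Z_1(G^{(1)})|/c$ of the $\lambda_1K_j$ are nonzero. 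The same holds for $\rho_2$.
\item Since $\rho_2K_j=\lambda_1K_j=0$ forces $K_j=0$, the kernels vanish for $j\gg0$, contradicting the infinite chain.
\end{enumerate}

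If you keep your own route instead, you need two things. One is the lower bound on the increments above. The other is an actual proof of the upper bound, for example from $\phi^-(Q)\ge\phi^-(G)$ for quotients $G\twoheadrightarrow Q$ in $\cA_i$, which gives $\text{Re}Z_i(Q)\le\max\bigl(0,\cot\pi\phi^-(G)\bigr)\,\text{Im}Z_i(G)$.
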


\section{Proof of Theorem~\ref{thm:IntroMainThmSG}.}
\label{sec:Proof-of-main-thm}
In this section, we prove Theorem~\ref{thm:main-AG-Sn}. As outlined in the introduction, we first construct product-type stability conditions on $\Db((\bP^1)^n)$ with the property that every stable object splits as $E_1\boxtimes\cdots\boxtimes E_n$, where each $E_i$ is stable in the corresponding factor. This construction is carried out in Section~\ref{subsection:Pf-of-Main-AG}.
In Section~\ref{subsection:Pf-of-Main-SG}, we transport these stability conditions through the homological mirror symmetry equivalence to the Fukaya--Seidel‑type categories, and complete the proof of Theorem~\ref{thm:IntroMainThmSG}.

\subsection{Product-type stability conditions on \texorpdfstring{$(\bP^1)^n$}{}.}
\label{subsection:Pf-of-Main-AG}
Consider the subset $\cS_n$ of $n$-tuples of stability conditions on $\Db(\bP^1)$ defined by
$$
\cS_n=\{(\sigma_1,\ldots,\sigma_n):\text{ at most one }\sigma_i\text{ lies in }\Stab^\Geo\Db(\bP^1)\}\subseteq(\Stab\Db(\bP^1))^n.
$$
Quotienting by the component-wise free $\bC$-actions yields the space
$$
\overline{\cS}_n\coloneqq\cS_n/\bC^n\subseteq(\Stab\Db(\bP^1)/\bC)^n.
$$
It is straightforward to verify that $\overline\cS_n$ is a connected $n$-dimensional submanifold (with boundary) of $(\Stab\Db(\bP^1)/\bC)^n\cong\bC^n$.

The goal of this subsection is to prove the following theorem.

\begin{thm}\label{thm:main-AG-Sn}
For every $(\sigma_1,\ldots,\sigma_n)\in\cS_n$, there exists a unique stability condition, denoted by $\sigma_1\boxtimes\cdots\boxtimes\sigma_n\in\Stab\Db((\bP^1)^n)$, such that:
\begin{enumerate}[label=(\roman*)]
\item\label{cond:prod-Z} Its central charge satisfies
$$
Z_{\sigma_1\boxtimes\cdots\boxtimes\sigma_n}(E_1\boxtimes\cdots\boxtimes E_n)=\prod_{i=1}^n Z_{\sigma_i}(E_i)
$$
for all objects $E_1,\ldots,E_n\in\Db(\bP^1)$.
\item\label{cond:prod-phase} If each $E_i\in\Db(\bP^1)$ is $\sigma_i$-stable of phase $\phi_i$, then $E_1\boxtimes\cdots\boxtimes E_n$ is $(\sigma_1\boxtimes\cdots\boxtimes\sigma_n)$-stable of phase $\sum_{i=1}^n\phi_i$.
\end{enumerate}
Moreover, these product-type stability conditions satisfy:
\begin{enumerate}[label=(\alph*)]
\item\label{item-property-a} An object $E\in\Db((\bP^1)^n)$ is $(\sigma_1\boxtimes\cdots\boxtimes\sigma_n)$-stable if and only if $E\cong E_1\boxtimes\cdots\boxtimes E_n$ for some objects $E_i\in\Db(\bP^1)$ that are $\sigma_i$-stable for all $i=1,\ldots,n$.
\item\label{item-property-b} Two elements $(\sigma_1,\ldots,\sigma_n),(\sigma'_1,\ldots,\sigma'_n)\in\cS_n$ yield the same product stability condition up to the $\bC$-action, i.e.,
$$
\overline{\sigma_1\boxtimes\cdots\boxtimes\sigma_n}=\overline{\sigma'_1\boxtimes\cdots\boxtimes\sigma'_n}\in\Stab\Db((\bP^1)^n)/\bC,
$$
if and only if $(\overline{\sigma_1},\ldots,\overline{\sigma_n})=(\overline{\sigma'_1},\ldots,\overline{\sigma'_n})$ in $\overline{\cS}_n$.
\end{enumerate}
Consequently, there is a well-defined embedding
$$
\overline{\cS}_n\hookrightarrow\Stab\Db((\bP^1)^n)/\bC.
$$
\end{thm}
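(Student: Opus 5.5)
Since at most one $\sigma_i$ is geometric, I reorder the factors so that $\sigma_2,\ldots,\sigma_n$ are algebraic, with $\sigma_i\in\Stab^\Alg_{k_i}\Db(\bP^1)$ for $i\ge2$; $\sigma_1$ is arbitrary. For each algebraic $\sigma_i$ the stable objects are exactly the shifts of $\cO(k_i)$ and $\cO(k_i+1)$. These form a full exceptional pair, so $\Db(\bP^1)=\langle \cO(k_i),\cO(k_i+1)\rangle$, and $\sigma_i$ is the stability condition attached to this pair with the phase inequality $\phi(\cO(k_i))+1\le\phi(\cO(k_i+1))$. Taking products gives a semiorthogonal decomposition of $\Db((\bP^1)^n)$ into $2^{n-1}$ pieces
$$
\cD_{\epsilon}=\Db(\bP^1)\boxtimes\cO(k_2+\epsilon_2)\boxtimes\cdots\boxtimes\cO(k_n+\epsilon_n),\qquad \epsilon\in\{0,1\}^{n-1},
$$
ordered lexicographically. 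Each piece is equivalent to $\Db(\bP^1)$ via $E\mapsto E\boxtimes\cO(\ldots)$. On $\cD_\epsilon$ I put the transported stability condition $\sigma_1$, rotated by $\sum_{i\ge2}\phi_{\sigma_i}(\cO(k_i+\epsilon_i))$ and rescaled by $\prod_{i\ge2}Z_{\sigma_i}(\cO(k_i+\epsilon_i))$; in terms of the $\bC$-action this is $\sigma_1\cdot c_\epsilon$ for a suitable $c_\epsilon$. Then I glue iteratively using Lemma~\ref{lemma:CP-gluing-heart} and Proposition~\ref{prop:CP-gluing-stab-HN}. If all $\sigma_i$ are algebraic, one can instead use the full exceptional collection of all $\cO(k_1+\epsilon_1,\ldots,k_n+\epsilon_n)$ together with Lemma~\ref{lemma:Macri}, after rotating so that the collection becomes Ext-exceptional.

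\textbf{Key steps.} The first step is the gluing vanishing condition. By the Künneth formula, $\Hom$ between pieces factorizes. The Ext-vanishing $\Hom(\cA_\epsilon,\cA_{\epsilon'}[\le 0])=0$ should then follow from two facts: $\Hom(\cO(k+1),\cO(k)[*])=0$, and the inequality $\phi(\cO(k_i+1))-\phi(\cO(k_i))\ge1$. This inequality makes the rotated hearts $\cA_\epsilon$ sit at least one unit apart in phase, in the required direction.

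The second step is the remaining hypotheses of Proposition~\ref{prop:CP-gluing-stab-HN}. The isolatedness hypothesis on $\text{Im}Z$ may fail for a general $\sigma_1$. My plan is to first apply the $\widetilde{\GL^+(2,\bR)}$-action to reach a discrete case, or to use that the set of phases of stable objects is locally finite. Failing that, I would invoke the general gluing result of \cite{CPgluing}, which needs only the support property and a phase-gap condition. The support property then holds because there are only finitely many pieces, each with the support property.

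The third step verifies the listed properties. For condition~\ref{cond:prod-Z}, the central charge is additive and agrees with the product formula on the products of generators $\cO(\ldots)$. It therefore agrees on $K_0$, since $K_0$ is the tensor product of the factors' $K_0$.

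For condition~\ref{cond:prod-phase} and property~\ref{item-property-a}, I show that the glued condition has the following stable objects: the objects of some $\cD_\epsilon$ that are stable there, i.e.\ $E_1\boxtimes\cO(k_2+\epsilon_2)\boxtimes\cdots$ with $E_1$ $\sigma_1$-stable, with the expected phase. The argument is that when the gap between glued pieces is at least one in phase (strict, or non-strict with orthogonality), an object lying in several pieces has a Harder--Narasimhan filtration refining the semiorthogonal filtration. I expect this to be the main obstacle. The boundary case $\phi(\cO(k+1))=\phi(\cO(k))+1$ needs care: nontrivial extensions could produce new stable objects such as $\cO(k+2)$-type objects. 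I would handle it via the exact sequences $0\to\cO(k)^2\to\cO(k+1)^{\oplus}\ldots$, showing these objects are strictly semistable rather than stable.

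Uniqueness follows from condition~\ref{cond:prod-phase}. The objects $\cO(k_1+\epsilon_1,\ldots)$ are forced to be stable with prescribed phases, so their extension-closure determines the heart, and $Z$ is determined by condition~\ref{cond:prod-Z}.

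For property~\ref{item-property-b}, suppose two tuples give the same condition modulo $\bC$. Then their sets of stable objects agree, and by~\ref{item-property-a} this set determines the $k_i$ and the sets of stable objects in each factor. Ratios of central charges of products of stable objects then recover each $Z_{\sigma_i}$ up to a scalar. Since $(Z,\text{stable phases})$ determines $\sigma_i$ modulo $\bC$, this shows $(\overline{\sigma_i})=(\overline{\sigma'_i})$. Together with continuity of the gluing, this yields the embedding $\overline{\cS}_n\hookrightarrow\Stab\Db((\bP^1)^n)/\bC$.
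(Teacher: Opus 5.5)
Your overall route is the paper's. You glue along the semiorthogonal decomposition coming from the exceptional pairs $\cO(k_i),\cO(k_i+1)$; you glue $2^{n-1}$ rotated copies of $(\Db(\bP^1),\sigma_1)$ at once, where the paper inducts on $n$ with two pieces. Your check of $\Hom(\cA_\epsilon,\cA_{\epsilon'}[\leq0])=0$ via K\"unneth and the gap $\phi_{\epsilon'}-\phi_\epsilon\geq1$ is correct and cleaner than the paper's case analysis on degrees; here $\phi_\epsilon=\sum_{i\geq2}\phi_{\sigma_i}(\cO(k_i+\epsilon_i))$. The isolatedness hypothesis you worry about does in fact hold. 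On each piece, the stable objects in the heart are either two simple objects, or have central charges $\pm w_\epsilon(-m+\tau)$ and $\pm w_\epsilon$ for a fixed $w_\epsilon\in\bC^\times$. Their imaginary parts therefore form a discrete set, and $0$ is isolated in $\text{Im}\,Z$ on every heart involved.

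The genuine gap is property (a), on which your support-property argument and the whole application to special Lagrangians rest. You only assert that an object meeting several pieces ``has a Harder--Narasimhan filtration refining the semiorthogonal filtration'', and taken literally this is false. For stable $S\in\cA_\epsilon$ and $T\in\cA_{\epsilon'}$ with $\epsilon$ before $\epsilon'$ and $\phi(S)>\phi(T)$, the object $S\oplus T$ has semiorthogonal filtration $T\subset S\oplus T$ but Harder--Narasimhan filtration $S\subset S\oplus T$. What decides whether new stable objects appear is $\Hom^1$ between pieces. The gluing condition $\Hom^{\leq0}=0$ does not control this, and your proposal never formulates the vanishing that is needed.

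The missing lemma is as follows. Write $\cO(\epsilon)$ for $\cO(k_2+\epsilon_2)\boxtimes\cdots\boxtimes\cO(k_n+\epsilon_n)$. Suppose $S=E\boxtimes\cO(\epsilon)[N]\in\cA_\epsilon$ and $T=F\boxtimes\cO(\epsilon')[N']\in\cA_{\epsilon'}$ are stable, $\epsilon$ precedes $\epsilon'$, and $\phi(T)<\phi(S)$. Then $\Hom^1(S,T)=0$.

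Indeed, if $\epsilon\not\leq\epsilon'$, everything vanishes by K\"unneth. Otherwise $\Hom^1(S,T)\cong\Hom^{1+N'-N}(E,F)\otimes\Hom(\cO(\epsilon),\cO(\epsilon'))$. A nonzero class forces $\phi_{\sigma_1}(E)\leq\phi_{\sigma_1}(F)+1+N'-N$, and adding $\phi_\epsilon\leq\phi_{\epsilon'}-1$ yields $\phi(S)\leq\phi(T)$, a contradiction.

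With this lemma, refine the semiorthogonal filtration of any $E\in\cA$ by Jordan--H\"older factors inside each piece, then bubble-sort. An adjacent out-of-order pair always has its lower factor from a later piece, so by the lemma the extension splits and the two factors can be swapped. The result is a filtration by stable objects with non-increasing phases, so a stable $E$ has length one. This is exactly the paper's argument, for two pieces in Case~2 and for the exceptional collection in Case~1.

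The lemma also makes your special handling of the boundary case $\phi(\cO(k+1))=\phi(\cO(k))+1$ unnecessary. Equal phases need no swap and give strictly semistable objects. Within a single algebraic factor, nothing like $\cO(k+2)$ is stable, by the definition of $\Stab^\Alg_k\Db(\bP^1)$.

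A smaller point concerns uniqueness when $\sigma_1$ is geometric. There you must use all $E_1\boxtimes\cO(\epsilon)$ with $E_1$ $\sigma_1$-stable, skyscrapers included. Their extension closure is a heart contained in $\cA_\sigma$, hence equal to it. Line bundles alone only generate something like vector bundles inside $\Coh$, which is not a heart.
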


We first show that for each $(\sigma_1,\ldots,\sigma_n) \in \cS_n$, there exists a unique stability condition on $\Db((\bP^1)^n)$ satisfying Conditions~\ref{cond:prod-Z} and~\ref{cond:prod-phase} of Theorem~\ref{thm:main-AG-Sn}, and that this stability condition further satisfies Property~\ref{item-property-a}.

\medskip\noindent\textbf{Case 1: All $\sigma_i\notin\Stab^\Geo\Db(\bP^1)$.}
In this case, there exist integers $k_1,\ldots,k_n$ such that 
$$
\sigma_i\in\Stab^\Alg_{k_i}\Db(\bP^1) \quad \text{ for all } \quad 1\leq i\leq n.
$$
To simplify the notation, we introduce the following definitions:
\begin{itemize}
\item For an $n$-tuple $I=(i_1,\ldots,i_n)\in\{0,1\}^n$, let
$$
\cL_I=\cL_{i_1,\ldots,i_n}\coloneqq\cO(k_1+i_1,\ldots,k_n+i_n).
$$
\item Let $|I|\coloneqq\sum_{\ell=1}^n i_\ell$.
\item We define a partial ordering on $\{0,1\}^n$ by setting $I\leq I'$ if $i_\ell\leq i'_\ell$ holds for all $1\leq \ell\leq n$.
\end{itemize}

The ordered set of $2^n$ distinct line bundles $\{\cL_{I_1}, \ldots, \cL_{I_{2^n}}\}$ forms an exceptional collection in $\Db((\bP^1)^n)$ if and only if the ordering is compatible with the partial order on indices, in the sense that $I_r \leq I_s$ implies $r \leq s$. 
This follows from the observation that $\Hom(\cL_I, \cL_{I'}[k])\neq0$ precisely when $I \leq I'$ and $k=0$. 
Such an ordering always exists (for instance, by arranging the line bundles by non‑decreasing total degree $|I|$); we fix such an ordering for the rest of the argument.

\begin{prop}\label{prop:existence-case1}
For any positive numbers $m_I>0$, any real number $\phi\in\bR$, and real numbers $\phi_1,\ldots,\phi_n\geq1$, there exists a unique stability condition on $\Db((\bP^1)^n)$ such that:
\begin{enumerate}[label=(\roman*)]
    \item Each $\cL_I$ is stable of phase $\phi_I\coloneqq\phi+\sum_{\ell=1}^n i_\ell\cdot\phi_\ell$.
    \item $Z(\cL_I)=m_I\, e^{\sqrt{-1}\pi\phi_I}$.
\end{enumerate}
Moreover, with respect to this stability condition, the only stable objects are the line bundles $\cL_I$ and their shifts.
\end{prop}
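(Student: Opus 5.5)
Since the phase conditions $\phi_\ell\ge 1$ make the collection $\{\cL_I\}$ behave like an Ext-exceptional collection after shifting, the plan is to apply Lemma~\ref{lemma:Macri} to suitable shifts and then identify the stable objects directly.

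\emph{Shifted collection.} The $\cL_I$ are ordered compatibly with the partial order, and $\Hom(\cL_I,\cL_{I'}[k])\neq0$ only when $I\le I'$ and $k=0$. Write $\phi_I=\phi+\sum_\ell i_\ell\phi_\ell$ and choose integers $n_I$ with $\phi_I-n_I\in(0,1]$. Set $E_I\coloneqq\cL_I[n_I]$. For $I\neq I'$ we have $\Hom(E_I,E_{I'}[k])=\Hom(\cL_I,\cL_{I'}[k+n_{I'}-n_I])$. This vanishes unless $I<I'$ and $k=n_I-n_{I'}$.

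\emph{Ext-exceptionality.} When $I<I'$ we have $\phi_{I'}-\phi_I=\sum_\ell(i'_\ell-i_\ell)\phi_\ell\ge 1$, since $\phi_\ell\ge1$. Because both $\phi_I-n_I$ and $\phi_{I'}-n_{I'}$ lie in $(0,1]$, this forces $n_{I'}-n_I\ge 1$, i.e. $k=n_I-n_{I'}\le -1$. Hence the only nonzero morphisms occur in strictly negative degrees of $k$; in the sense used by Lemma~\ref{lemma:Macri} they are positive Ext's. So $\{E_I\}$ is a full Ext-exceptional collection. Fullness holds because the Beilinson-type collection generates $\Db((\bP^1)^n)$, as a box product of the generating pairs $\cO(k_\ell),\cO(k_\ell+1)$.

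\emph{Existence and uniqueness.} Lemma~\ref{lemma:Macri} applied with $z_I=m_Ie^{\sqrt{-1}\pi(\phi_I-n_I)}\in\bH\cup\bR_{<0}$ gives a stability condition in which each $E_I$ is stable with phase $\phi_I-n_I$. Consequently $\cL_I$ is stable of phase $\phi_I$ with $Z(\cL_I)=m_Ie^{\sqrt{-1}\pi\phi_I}$, as required. The heart is of finite length, with simple objects exactly the $E_I$, so the support property is automatic.

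For uniqueness, let $\sigma'$ be another stability condition with these properties. Its central charge agrees with ours on a basis of $K_0$. Each $E_I$ lies in $\cP'(0,1]$, so the heart $\langle E_I\rangle$ is contained in the heart of $\sigma'$. Two hearts of bounded t-structures with one containing the other must coincide, and a stability condition is determined by its heart and central charge.

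\emph{Only stable objects.} This is the main obstacle. An object that is stable in a finite-length heart whose simples are the $E_I$ has a Jordan--H\"older filtration by the $E_I$. The plan is to exploit the phase gaps: every nonzero $\mathrm{Ext}^1(E_I,E_{I'})$ with $I<I'$ comes from $\Hom(\cL_I,\cL_{I'})$, and this is nonzero only when $n_{I'}-n_I=1$. Then $\phi_{I'}-n_{I'}\le\phi_I-n_I$, with the inequality coming from $\phi_{I'}-\phi_I\ge1$. So in any nontrivial extension $0\to E_{I'}\to F\to E_I\to0$ the subobject $E_{I'}$ has phase at most that of the quotient, which destabilizes $F$ or at best leaves it strictly semistable. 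The equality case $\phi_{I'}-\phi_I=1$ exactly is not stable either, since $E_{I'}$ is then a proper subobject of the same phase.

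To pass from two factors to an arbitrary object $F$ with several Jordan--H\"older factors, I would argue by induction on length. Let $E_{I'}$ be a factor with $I'$ maximal in the partial order among the factors of $F$. Since $\Hom(E_J,E_{I'})=0$ for $J\neq I'$, the object $E_{I'}$ occurs as a subobject, namely the socle-type piece $\rho$ from the semiorthogonal decomposition. I would then check that its phase is at least the phase of $F$. This uses that $Z(F)$ is a positive combination of the $Z(E_J)$ whose arguments are all $\ge$ the argument of $E_{I'}$, which follows from the ordering above. If that step proves delicate, the alternative is to pick the factor of maximal phase and use the vanishing of maps out of it to realize it as a subobject.
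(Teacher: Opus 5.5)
\textbf{Verdict: genuine gap in the last step (the ``only stable objects'' part).} Your primary argument there fails, and your fallback, which is the paper's argument, is only sketched and needs care with ties. The existence and uniqueness parts are correct once the sign slips noted at the end are fixed.

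\textbf{Why the main plan fails.} You take $E_{I'}$ with $I'$ maximal in the partial order among the Jordan--H\"older factors of $F$. Such an $E_{I'}$ can indeed be moved to the bottom of a composition series. The reason is that $\mathrm{Ext}^1(E_{I'},E_J)=0$ for every factor $E_J$; the vanishing of $\Hom(E_J,E_{I'})$ is irrelevant. But its phase need not be $\ge\phi(F)$. The phases $\phi_J-n_J$ are fractional parts of the $\phi_J$ and are not monotone along the partial order. For example, take $\phi=\tfrac12$, $\phi_1=\tfrac95$, $I=0$ and $I'=(1,0,\ldots,0)$. Then $E_I=\cL_I$ has phase $\tfrac12$ while $E_{I'}=\cL_{I'}[-2]$ has phase $\tfrac{3}{10}$. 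So in $F=E_I\oplus E_{I'}$ the partial-order-maximal factor has phase $<\phi(F)$. Your justification is also reversed: you would need every other factor to have phase $\le$ that of the chosen subobject.

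\textbf{What works.} Make the fallback precise; this is exactly the paper's argument. With $E_J=\cL_J[-n_J]$, a nonzero $\mathrm{Ext}^1(E_B,E_A)=\Hom(\cL_B,\cL_A[1+n_B-n_A])$ forces $B<A$ and $n_A=n_B+1$, hence $\phi(E_A)\ge\phi(E_B)$. Thus $\mathrm{Ext}^1(E_B,E_A)=0$ whenever $\phi(E_A)<\phi(E_B)$. So if consecutive subquotients of a filtration are $A$ then $B$ with $\phi(A)<\phi(B)$, the extension splits and the two can be swapped. Bubble-sorting gives a filtration with non-increasing phases. Its first step is a proper subobject of phase $\ge\phi(F)$ once the length is $\ge2$.

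Ties genuinely need this care. If $\phi_{I'}-\phi_I=1$ with $I<I'$, then $E_I$ and $E_{I'}$ have the same phase. There is then a non-split extension $0\to E_{I'}\to F\to E_I\to0$, in which $E_I$, although of maximal phase, is \emph{not} a subobject. So ``pick the factor of maximal phase and use vanishing of maps out of it'' does not suffice as stated. You must either permit ties in the rearrangement, or choose, among the factors of maximal phase, one that is maximal in the partial order.

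\textbf{Sign slips in the earlier parts.} Your existence and uniqueness arguments agree with the paper: shift phases into $(0,1]$ and apply Macr\`i's lemma. However, the shift has the wrong sign. To give $E_I$ phase $\phi_I-n_I$ you need $E_I=\cL_I[-n_I]$. As written, your computation shows $\Hom(E_I,E_{I'}[k])\neq0$ only for $k\le-1$, which is precisely what Ext-exceptionality forbids. Calling these ``positive Ext's'' does not repair it. With the correct sign, the only nonzero degree is $k=n_{I'}-n_I\ge1$. The same reversal occurs in your two-factor discussion. There the correct inequality is $\phi_{I'}-n_{I'}\ge\phi_I-n_I$: the subobject has phase \emph{at least} that of the quotient. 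That, not ``at most'', is what destabilizes the extension.
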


\begin{proof}
Set $p_I=-\lceil\phi_I-1\rceil$, so that the shifted phases satisfy $\phi(\cL_I[p_I]) = \phi_I + p_I \in (0, 1]$ for all $I \in \{0,1\}^n$. We claim that the shifted collection
$$
\cE\coloneqq\{\cL_{I_1}[p_{I_1}],\ldots,\cL_{I_{2^n}}[p_{I_{2^n}}]\}
$$
is a full Ext-exceptional collection. It suffices to show that
$$
\Hom^{\leq0}(\cL_I[p_I],\cL_{I'}[p_{I'}])=0 \quad \text{ for all } \quad I\neq I'.
$$
A non-zero morphism exists only if $I \lneq I'$ in the partial order. In this case, $\phi_{I'} \geq \phi_I + \phi_i$ for some $1 \leq i \leq n$. Given the assumption $\phi_i \geq 1$, we have $\phi_{I'} \geq \phi_I + 1$, which implies $p_I \geq p_{I'} + 1$.
Therefore, for $k\leq0$ we have
$$
\Hom^k(\cL_I[p_I], \cL_{I'}[p_{I'}]) = \Hom^{k + p_{I'} - p_I}(\cL_I, \cL_{I'})=0
$$
since $k + p_{I'} - p_I<0$.
Thus, the existence and uniqueness of the stability condition follow from \cite[Section~3.3]{Macri} (see Lemma~\ref{lemma:Macri}), where the heart $\cA$ is generated by the collection $\cE$. Furthermore, the objects $\cL_I$ are stable with respect to this stability condition.

It remains to show that, up to shifts, the line bundles $\cL_I$ are the only stable objects. It suffices to prove that the only stable objects in the heart $\cA = \langle \cE \rangle$ are precisely the elements of $\cE$. Every object $E \in \cA$ admits a finite filtration in the heart$$0=E_0\subsetneq E_1\subsetneq\cdots\subsetneq E_m=E$$such that each subquotient $E_j/E_{j-1}$ is isomorphic to some $\cL_{I}[p_I]$.

We claim that $\Hom^1(\cL_{I'}[p_{I'}], \cL_I[p_I]) = 0$ whenever $\phi(\cL_I[p_I]) < \phi(\cL_{I'}[p_{I'}])$. Indeed, if $\Hom^1(\cL_{I'}[p_{I'}], \cL_I[p_I]) \neq 0$, then $I' \lneq I$ and $p_{I'} = p_I + 1$. Hence
$$
\phi(\cL_I[p_I]) = \phi_I + p_I\geq(\phi_{I'}+1)+(p_{I'}-1)=\phi(\cL_{I'}[p_{I'}]).
$$
This proves the claim.

Because these extensions vanish, we may rearrange the subquotients of the filtration so that their phases are non‑increasing. For instance, suppose
$$
E_i/E_{i-1}\cong\cL_I[p_I] \quad \text{ and } \quad E_{i+1}/E_i\cong\cL_{I'}[p_{I'}] \quad \text{ satisfy } \quad \phi(\cL_I[p_I]) < \phi(\cL_{I'}[p_{I'}]).
$$
By the claim, we have $\Hom^1(\cL_{I'}[p_{I'}], \cL_I[p_I]) = 0$. Therefore,
$$
E_{i+1}/E_{i-1} \cong \cL_I[p_I] \oplus \cL_{I'}[p_{I'}].
$$
We can therefore replace $E_i$ with another intermediate object $E_i'$ to obtain a new filtration where the phases of the subquotients are non‑increasing.
Thus, $E$ is stable exactly when its filtration has length one, i.e.~when $E\cong\cL_I[p_I]$ for some $I$. This completes the proof.
\end{proof}

\begin{rmk}
The argument given above is similar in spirit to \cite[Lemma~2.4]{ChunyiP2} and can be used to prove a basic fact regarding \emph{pure} algebraic stability conditions, which we record in the appendix (see Proposition~\ref{prop:appendix-pure}).
While this result is elementary, we include a complete proof for two reasons: it appears only as a specialized case in \cite[Lemma~2.4]{ChunyiP2}, and in a weaker form in recent work such as \cite[Proposition~3.7 and Remark~3.8]{Vanja}; thus we find it useful to record it explicitly.
\end{rmk}

\begin{cor}\label{cor:ALG-EUa}
Let $\sigma_1, \ldots, \sigma_n \in \Stab\Db(\bP^1)$ be stability conditions such that
$$
\sigma_i\in\Stab^\Alg_{k_i}\Db(\bP^1) \quad \text{ for all } \quad 1\leq i\leq n.
$$
Then there exists a unique stability condition $\sigma_1\boxtimes\cdots\boxtimes\sigma_n\in\Stab\Db((\bP^1)^n)$ satisfying:
\begin{enumerate}[label=(\roman*)]
    \item Its central charge satisfies $Z_{\sigma_1\boxtimes\cdots\boxtimes\sigma_n}(E_1\boxtimes\cdots\boxtimes E_n)=\prod_{i=1}^nZ_i(E_i)$ for all objects $E_1,\ldots,E_n\in\Db(\bP^1)$.
    \item If each $E_i\in\Db(\bP^1)$ is $\sigma_{i}$-stable of phase $\phi_i$, then $ E_1\boxtimes\cdots\boxtimes E_n$ is $(\sigma_1\boxtimes\cdots\boxtimes\sigma_n)$-stable of phase $\sum_{i=1}^n\phi_i$.
\end{enumerate}
Moreover, an object $E\in\Db((\bP^1)^n)$ is $(\sigma_1\boxtimes\cdots\boxtimes\sigma_n)$-stable if and only if $E\cong E_1\boxtimes\cdots\boxtimes E_n$ for some $\sigma_i$-stable objects $E_i \in \Db(\bP^1)$.
\end{cor}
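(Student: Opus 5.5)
Existence comes straight from Proposition~\ref{prop:existence-case1}, after choosing its parameters from the $\sigma_i$. For each $i$, $\sigma_i\in\Stab^\Alg_{k_i}\Db(\bP^1)$, so by Okada's classification the only $\sigma_i$-stable objects are shifts of $\cO(k_i)$ and $\cO(k_i+1)$, and $\phi_{\sigma_i}(\cO(k_i))+1\leq\phi_{\sigma_i}(\cO(k_i+1))$. Fix representative phases $\alpha_i\coloneqq\phi_{\sigma_i}(\cO(k_i))$ and $\beta_i\coloneqq\phi_{\sigma_i}(\cO(k_i+1))$. Then set
$$
\phi\coloneqq\sum_i\alpha_i,\qquad \phi_i\coloneqq\beta_i-\alpha_i\geq1,\qquad m_I\coloneqq\prod_i\bigl|Z_i(\cO(k_i+i_\ell))\bigr|>0,
$$
where in $m_I$ the index $i_\ell$ is the $i$-th entry of $I$. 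With these choices $\phi_I=\sum_i\phi_{\sigma_i}(\cO(k_i+i_i))$ and $m_I e^{\sqrt{-1}\pi\phi_I}=\prod_i Z_i(\cO(k_i+i_i))$. Proposition~\ref{prop:existence-case1} then produces a stability condition $\sigma$ in which each $\cL_I=\boxtimes_i\cO(k_i+i_i)$ is stable with exactly this phase and central charge.

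\textbf{Condition (i).} The classes $\cL_I$ form a full exceptional collection, so they span $K_0$. Both sides of (i) are multilinear in the classes $[E_i]$, and each $K_0(\Db(\bP^1))$ is spanned by $[\cO(k_i)]$ and $[\cO(k_i+1)]$. So it suffices to check (i) on the generators $\cL_I$, where it holds by construction. We also need the central charge to factor through $N(\Db((\bP^1)^n))$. This is automatic, since $K_0$ is already numerical here: it is free of rank $2^n$.

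\textbf{Condition (ii).} Any $\sigma_i$-stable $E_i$ is $\cO(k_i+\epsilon_i)[m_i]$ for some $\epsilon_i\in\{0,1\}$ and $m_i\in\bZ$, with phase $\phi_{\sigma_i}(\cO(k_i+\epsilon_i))+m_i$. Since external tensor product commutes with shifts, $\boxtimes E_i\cong\cL_I[\sum m_i]$ with $I=(\epsilon_1,\ldots,\epsilon_n)$. This object is stable of phase $\phi_I+\sum m_i=\sum\phi_i$, as required. The \emph{moreover} clause is then immediate from the last sentence of Proposition~\ref{prop:existence-case1}: the only stable objects are shifts of the $\cL_I$, and each is such a product. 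Conversely, products of stable objects are stable by (ii).

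\textbf{Uniqueness.} Let $\sigma'$ be any stability condition satisfying (i) and (ii). By (ii), each $\cL_I$ is $\sigma'$-stable of phase $\phi_I$. By (i), $Z'(\cL_I)=m_I e^{\sqrt{-1}\pi\phi_I}$. These are exactly the hypotheses of Proposition~\ref{prop:existence-case1}, whose uniqueness statement (inherited from Lemma~\ref{lemma:Macri}: the heart $\cP'(0,1]$ must contain the Ext-exceptional collection $\cE$, hence equals $\langle\cE\rangle$) forces $\sigma'=\sigma$.

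\textbf{Main obstacle.} The delicate point is that phases of $\sigma_i$-stable objects are only defined up to the integer shift ambiguity, together with the inequality $\phi_i\geq1$. I will handle both by fixing the representatives $\alpha_i,\beta_i$ above and quoting Okada's remark that $\phi_{\sigma_i}(\cO(k_i))+1\leq\phi_{\sigma_i}(\cO(k_i+1))$ for that choice. Everything else is bookkeeping.
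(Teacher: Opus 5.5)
Your proposal is correct and is essentially the paper's proof: you apply Proposition~\ref{prop:existence-case1} with $\phi=\sum_i\phi_{\sigma_i}(\cO(k_i))$, with the phase gaps $\phi_i\geq 1$ supplied by the algebraic condition, and with $m_I=\prod_\ell |Z_\ell(\cO(k_\ell+i_\ell))|$, exactly as the paper does. The paper only says the conclusion ``follows directly'' from that proposition, whereas you spell out the multilinearity check for (i), the shift bookkeeping for (ii), and the reduction of uniqueness to the proposition's uniqueness clause, and all of these steps are sound.
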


\begin{proof}
For each $i \in \{1, \ldots, n\}$, the $\sigma_i$-stable objects (up to shift) are precisely $\cO(k_i)$ and $\cO(k_i+1)$.
Denote their phases and central charges as follows:
\begin{itemize}
\item $\cO(k_i)$ has phase $\psi_i$ and central charge $Z_i(\cO(k_i)) = m_{i,0} e^{\sqrt{-1}\pi\psi_i}$;
\item $\cO(k_i+1)$ has phase $\psi_i + \phi_i$ and central charge $Z_i(\cO(k_i+1)) = m_{i,1} e^{\sqrt{-1}\pi(\psi_i + \phi_i)}$.
\end{itemize}
Since $\sigma_i \in \Stab^\Alg_{k_i}\Db(\bP^1)$, we have $\phi_i \geq 1$. 
Now apply Proposition~\ref{prop:existence-case1} with the choices
$$
m_I = m_{i_1, \ldots, i_n} \coloneqq \prod_{\ell=1}^n m_{\ell, i_\ell}, \qquad \phi \coloneqq \sum_{i=1}^n \psi_i,
$$
and $\phi_1,\ldots,\phi_n$ as above.
The conclusion follows directly from the properties stated in that proposition.
\end{proof}

\medskip\noindent\textbf{Case 2: Exactly one component $\sigma_i$ lies in $\Stab^\Geo\Db(\bP^1)$.}
Without loss of generality, we may assume $\sigma_1\in\Stab^\Geo\Db(\bP^1)$ and $\sigma_2,\ldots,\sigma_n\in\Stab\Db(\bP^1)\backslash\Stab^\Geo\Db(\bP^1)$.

\begin{prop}\label{prop:GeoAlg-EUa}
Let $\sigma_1, \ldots, \sigma_n \in \Stab\Db(\bP^1)$ be stability conditions such that 
$$
\sigma_1\in\Stab^\Geo\Db(\bP^1) \quad \text{ and } \quad
\sigma_i\in\Stab^\Alg_{k_i}\Db(\bP^1) \quad \text{ for } \quad 2\leq i\leq n.
$$
Then there exists a unique stability condition $\sigma_1\boxtimes\cdots\boxtimes\sigma_n\in\Stab\Db((\bP^1)^n)$ satisfying:
\begin{enumerate}[label=(\roman*)]
    \item\label{item:GAAA-item1} Its central charge satisfies $Z_{\sigma_1\boxtimes\cdots\boxtimes\sigma_n}(E_1\boxtimes\cdots\boxtimes E_n)=\prod_{i=1}^nZ_i(E_i)$ for all objects $E_1,\ldots,E_n\in\Db(\bP^1)$.
    \item\label{item:GAAA-item2} If each $E_i\in\Db(\bP^1)$ is $\sigma_{i}$-stable of phase $\phi_i$, then $ E_1\boxtimes\cdots\boxtimes E_n$ is $(\sigma_1\boxtimes\cdots\boxtimes\sigma_n)$-stable of phase $\sum_{i=1}^n\phi_i$.
\end{enumerate}
Moreover, an object $E\in\Db((\bP^1)^n)$ is $(\sigma_1\boxtimes\cdots\boxtimes\sigma_n)$-stable if and only if $E\cong E_1\boxtimes\cdots\boxtimes E_n$ for some $\sigma_i$-stable objects $E_i \in \Db(\bP^1)$.
\end{prop}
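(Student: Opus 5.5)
The plan is to use the semiorthogonal decomposition coming from the algebraic factors, and to glue copies of $\sigma_1$ via Proposition~\ref{prop:CP-gluing-stab-HN}.

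For $I=(i_2,\ldots,i_n)\in\{0,1\}^{n-1}$, set
$$
\cD_I\coloneqq\Db(\bP^1)\boxtimes\cL'_I,\qquad \cL'_I\coloneqq\cO(k_2+i_2)\boxtimes\cdots\boxtimes\cO(k_n+i_n).
$$
Each $\cD_I$ is equivalent to $\Db(\bP^1)$ via $F\mapsto F\boxtimes\cL'_I$. The $\cL'_I$ form a full exceptional collection on $(\bP^1)^{n-1}$, ordered compatibly with the partial order. By the K\"unneth formula for $\Hom$, we have $\Hom(F\boxtimes\cL'_I,G\boxtimes\cL'_{J}[k])=\bigoplus_{p+q=k}\Hom^p(F,G)\otimes\Hom^q(\cL'_I,\cL'_J)$. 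This gives a semiorthogonal decomposition $\Db((\bP^1)^n)=\langle\cD_{I_1},\ldots,\cD_{I_{2^{n-1}}}\rangle$ in the appropriate order.

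On $\cD_I$ I put the transported stability condition $\sigma_1\cdot c_I$. Here $c_I$ is chosen so that $Z(F\boxtimes\cL'_I)=Z_1(F)\prod_{\ell\ge2}Z_\ell(\cO(k_\ell+i_\ell))$ and the phases are shifted by $\sum_{\ell\ge2}\phi_{\sigma_\ell}(\cO(k_\ell+i_\ell))$. After normalizing as in Proposition~\ref{prop:existence-case1}, each algebraic factor increases the phase by some $\phi_\ell\ge1$ when $i_\ell$ goes from $0$ to $1$.

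Next I glue iteratively, inducting on the number of pieces. The hypothesis $\Hom(\cA_I,\cA_J[\le0])=0$ is the key check. A nonzero morphism from $F\boxtimes\cL'_I$ to $G\boxtimes\cL'_J[k]$ needs $I\lneq J$ and $\Hom^k(F,G)\ne0$, and only $q=0$ contributes. The phase gap between the two pieces is at least $1$, while the objects lie in the shifted hearts, which are slices of length $1$. So morphisms of degree $\le0$ would require $\Hom^{\le -1}$ between $\sigma_1$-semistable objects in the original heart with phase differences in a controlled range; these vanish because $\Db(\bP^1)$ has global dimension $1$ and the hearts are honest hearts.

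Proposition~\ref{prop:CP-gluing-stab-HN} requires the discreteness of $\operatorname{Im}Z$, which fails for generic $\sigma_1$. To handle this, I first rotate by the $\bC$-action so that $\sigma_1$ has rational (discrete) central charge image on a chosen heart; this is possible on an open dense set. Alternatively, I can apply the more general criterion of \cite{CPgluing} that uses $\cP$-slicings with a phase gap. I expect this to be the main technical obstacle; the fallback is to deform and use that the glued locus is open and closed, using the support property.

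Properties \ref{item:GAAA-item1} and \ref{item:GAAA-item2} then hold. Central charge multiplicativity follows because the classes $[F\boxtimes\cL'_I]$ span $K_0$. Stability of $E_1\boxtimes E_2\boxtimes\cdots$ follows from the gluing: an object lying in a single piece $\cD_I$ is stable in the glued condition iff it is stable in $\cD_I$. This is argued via the phase gap, exactly as in the proof of Proposition~\ref{prop:existence-case1}.

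For the \emph{moreover} part, I would follow the rearrangement argument of Proposition~\ref{prop:existence-case1}. Every object has a filtration with factors $F_I\boxtimes\cL'_I$, each $F_I$ being $\sigma_1$-semistable. Extensions $\Hom^1$ going from a higher-phase factor to a lower-phase factor vanish by the same $\phi_\ell\ge1$ inequality, so a stable object must lie in a single $\cD_I$, and stability there means $F$ is $\sigma_1$-stable.

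Uniqueness follows because any stability condition satisfying \ref{item:GAAA-item1} and \ref{item:GAAA-item2} makes all $\sigma_1$-stable objects of the form $F\boxtimes\cL'_I$ stable with the prescribed phases. Every object has an HN-type filtration by these objects, so its heart contains the glued heart; two hearts with one contained in the other coincide, and the central charges agree.
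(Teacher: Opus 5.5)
Your architecture works and is an unrolled version of the paper's two-piece induction: the decomposition into $2^{n-1}$ copies $\Db(\bP^1)\boxtimes\cL'_I$, the vanishing $\Hom(\cA_I,\cA_J[\le0])=0$, the rearrangement, and the heart-inclusion uniqueness argument all go through. For the vanishing, the reason is just the slicing axiom, not global dimension. Write $\psi_I=\sum_{\ell\ge2}\phi_{\sigma_\ell}(\cO(k_\ell+i_\ell))$; for $k\le0$ the object $G[k]$ has $\sigma_1$-phase at most $1-\psi_J\le-\psi_I$, which is strictly below the phase of $F$.

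The genuine gap is the Harder--Narasimhan step, which you leave open because of a false claim. The hypothesis of Proposition~\ref{prop:CP-gluing-stab-HN} is not that $\mathrm{Im}Z$ is discrete on the lattice. It only asks that $0$ be isolated in $\mathrm{Im}Z(\cA_I)$, and this holds for \emph{every} geometric $\sigma_1=\sigma_\tau\cdot c_1$. On the piece indexed by $I$ one has $Z(\cO(d)\boxtimes\cL'_I)=w_I(\tau-d)$ for a fixed $w_I\in\bC^\times$. So the stable objects lying in the heart have imaginary parts $|\mathrm{Im}(w_I\tau)-d\,\mathrm{Im}\,w_I|$. These either grow linearly in $|d|$, or equal the constant $|w_I|\,\mathrm{Im}\,\tau>0$ when $w_I\in\bR$; skyscrapers contribute a single value. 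Since $\mathrm{Im}Z$ is additive over Jordan--H\"older factors, $\mathrm{Im}Z(\cA_I)\subseteq\{0\}\cup[\delta,\infty)$ for some $\delta>0$. Every object of a partially glued heart is an iterated extension of objects of the $\cA_I$, so this bound survives each step of your iterated gluing.

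Your workarounds do not replace this check:
\begin{itemize}
\item The rotations $c_I$ of the pieces are pinned down by (i) and (ii), so you cannot rotate pieces individually.
\item Discreteness of $\mathrm{Im}$ of a rank-two lattice requires a rational relation, which is not an open dense condition.
\item The ``open and closed'' deformation argument and the unnamed criterion from \cite{CPgluing} are never carried out.
\end{itemize}

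There are two smaller omissions. First, you never verify the support property. It follows, as in the paper, from your classification of stable objects: $|Z(F\boxtimes\cL'_I)|=|Z_1(F)|\prod_{\ell\ge2}|Z_\ell(\cO(k_\ell+i_\ell))|$, there are only finitely many $I$, and $\sigma_1$ has the support property. Second, in the rearrangement, $\Hom^1$ from a higher-phase factor to a lower-phase factor vanishes only when the two factors lie in different pieces. Inside one piece, $\Hom^1(k(x),\cO(-1))\neq0$, and the extension is the stable object $\cO$. So you must keep the HN order within each piece and swap only across pieces, where your phase-gap computation does give the vanishing.
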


\begin{proof}
We proceed by induction on $n$. Suppose the statement holds for $n$. To establish the claim for $(\bP^1)^{n+1}$, consider the semiorthogonal decomposition
$$\Db((\bP^1)^{n+1}) = \left\langle \cD_0 \coloneqq \Db((\bP^1)^{n}) \boxtimes \cO(k_{n+1}), \, \cD_1 \coloneqq \Db((\bP^1)^{n}) \boxtimes \cO(k_{n+1}+1) \right\rangle.$$
Both $\cD_0$ and $\cD_1$ are equivalent to $\Db((\bP^1)^n)$. By the inductive hypothesis, there is a unique product stability condition 
$$
\tau \coloneqq \sigma_1 \boxtimes \cdots \boxtimes \sigma_n \in \Stab\Db((\bP^1)^n)
$$
satisfying the desired properties.
On each component $\cD_i$, we define a stability condition $\tau_i$ induced by $\tau$ and the $(n+1)$-th factor $\sigma_{n+1}$ as follows.
First, act on $\tau$ by the $\bC$-action given by the complex number
$$
\log|Z_{\sigma_{n+1}}(\cO_{k_{n+1}}+i)|+\sqrt{-1}\pi\phi_{\sigma_{n+1}}(\cO(k_{n+1}+i))\in\bC,
$$
then transport the stability condition through the equivalence $\Db((\bP^1)^n)\cong\cD_i$.
The resulting stability conditions $\tau_i\in\Stab\cD_i$ satisfy:
\begin{itemize}
    \item $Z_{\tau_i}(E \boxtimes \cO(k_{n+1}+i)) = Z_{\tau}(E) \cdot Z_{\sigma_{n+1}}(\cO(k_{n+1}+i))$ for any $E\in\Db((\bP^{1})^{n})$.
    \item An object $E\in\Db((\bP^1)^n)$ is $\tau$-stable if and only if $E \boxtimes \cO(k_{n+1}+i)$ is $\tau_i$-stable; and in this case, their phases satisfy
    $$
    \phi_{\tau_i}(E \boxtimes \cO(k_{n+1}+i)) = \phi_\tau (E)+\phi_{\sigma_{n+1}}(\cO(k_{n+1}+i)).
    $$
\end{itemize}
We claim that the gluing condition (see Section~\ref{subsubsec:stab-from-SOD})
$$
\Hom(\cA_0,\cA_1[\leq0])=0
$$
holds for the hearts $\cA_i \subseteq \cD_i$ associated with the stability conditions $\tau_i$.
By the induction hypothesis, the stable objects (up to shift) of $\tau$ in $\Db((\bP^1)^n)$ are of the form
$$
\cO(\star)\boxtimes\cL_J
$$
where:
\begin{itemize}
    \item $\star\in\bZ\cup\{\infty\}$; $\cO(\star)\in\Db(\bP^1)$ denotes a line bundle of degree $\star$ when $\star\in\bZ$, or a skyscraper sheaf when $\star=\infty$.
    \item $J=(j_2,\ldots,j_n)\in\{0,1\}^{n-1}$, and $\cL_J=\cO(k_2+j_2)\boxtimes\cdots\boxtimes\cO(k_n+j_n)\in\Db((\bP^1)^{n-1})$.
    We write $\phi_J\coloneqq\phi_{\sigma_2}(\cO(k_2+j_2))+\cdots+\phi_{\sigma_n}(\cO(k_n+j_n))$.
\end{itemize}
Therefore, the heart $\cA_i\subseteq\cD_i$ is generated by appropriate shifts of objects
$$
\cO(\star)\boxtimes\cL_J\boxtimes\cO(k_{n+1}+i).
$$
To prove the gluing condition holds, it suffices to show the vanishing condition between these generators:
$$
\Hom^{\leq0}(\cO(m_0)\boxtimes\cL_J\boxtimes\cO(k_{n+1})[N_0],\cO(m_1)\boxtimes\cL_{J'}\boxtimes\cO(k_{n+1}+1)[N_1])=0
$$
where $N_0,N_1$ are chosen so that the phases (with respect to $\tau_0,\tau_1$, respectively) of the two objects lie in $(0,1]$.
If $J\nleq J'$, no non‑zero morphisms exist. Assume now that $J\leq J'$.
\begin{enumerate}[label=(\roman*)]
    \item If $m_0\leq m_1$, then the morphism from $\cO(m_0)\boxtimes\cL_J\boxtimes\cO(k_{n+1})$ to $\cO(m_1)\boxtimes\cL_{J'}\boxtimes\cO(k_{n+1}+1)$ is concentrated in degree $0$. On the other hand,
\begin{align*}
\phi_{\tau_0}(\cO(m_0)\boxtimes\cL_J\boxtimes\cO(k_{n+1})) &= \phi_{\sigma_1}(\cO(m_0))+\phi_J+\phi_{\sigma_{n+1}}(\cO(k_{n+1})) \\
&\leq\phi_{\sigma_1}(\cO(m_1))+\phi_{J'}+(\phi_{\sigma_{n+1}}(\cO(k_{n+1}+1))-1) \\
& = \phi_{\tau_1}(\cO(m_1)\boxtimes\cL_{J'}\boxtimes\cO(k_{n+1}+1))-1.
\end{align*}
Hence $N_0\geq N_1+1$, and the required vanishing follows.

    \item If $m_0\geq m_1+2$, then the morphism from $\cO(m_0)\boxtimes\cL_J\boxtimes\cO(k_{n+1})$ to $\cO(m_1)\boxtimes\cL_{J'}\boxtimes\cO(k_{n+1}+1)$ is concentrated in degree $1$. On the other hand,
\begin{align*}
\phi_{\tau_0}(\cO(m_0)\boxtimes\cL_J\boxtimes\cO(k_{n+1})) &= \phi_{\sigma_1}(\cO(m_0))+\phi_J+\phi_{\sigma_{n+1}}(\cO(k_{n+1})) \\
&<(\phi_1(\cO(m_1))+1)+\phi_{J'}+(\phi_{n+1}(\cO(k+1))-1) \\
& = \phi_{\tau_1}(\cO(m_1)\boxtimes\cL_{J'}\boxtimes\cO(k_{n+1}+1)).
\end{align*}
Thus $N_0\geq N_1$, and the vanishing again holds.
\end{enumerate}
By Lemma~\ref{lemma:CP-gluing-heart}, the hearts $\cA_i\subseteq\cD_i$ glue to a heart $\cA$ of $\cD$. 

It is easy to check that $0$ is an isolated point of $\text{Im}Z_{\tau_i}(\cA_i)\subseteq\bR_{\geq0}$ for $i=0,1$.
Therefore, Proposition~\ref{prop:CP-gluing-stab-HN} implies that the glued central charge yields a stability function on $\cA$ satisfying the Harder--Narasimhan property.

\medskip\noindent\textbf{Characterization of stable objects.}
The heart $\cA$ is generated by $\cA_0$ and $\cA_1$; therefore, it is generated by appropriate shifts of objects of the form
$$
\cO(\star)\boxtimes\cL_J\boxtimes\cO(k_{n+1}), \quad \cO(\star)\boxtimes\cL_J\boxtimes\cO(k_{n+1}+1).
$$
These objects are stable by \cite[Proposition~2.2]{CPgluing}.
We claim that they are, in fact, the only stable objects.

The proof is similar to the purely algebraic case: for any object $E \in \cA$, we take a filtration whose successive subquotients are the stable generators listed above, and we wish to rearrange it so that the subquotients appear in order of non‑increasing phase.
Recall that every object of $\cA$ admits a filtration whose subquotients lie first in $\cA_1$ and then in $\cA_0$ (see Lemma~\ref{lemma:CP-gluing-heart}).
Hence it is enough to show that whenever
$$
0<\phi(\cO(m_1)\boxtimes\cL_{J'}\boxtimes\cO(k_{n+1}+1)[N_1])<\phi(\cO(m_0)\boxtimes\cL_{J}\boxtimes\cO(k_{n+1})[N_0])\leq1,
$$
we have
$$
\Hom^1(\cO(m_0)\boxtimes\cL_{J}\boxtimes\cO(k_{n+1})[N_0],\cO(m_1)\boxtimes\cL_{J'}\boxtimes\cO(k_{n+1}+1)[N_1])=0.
$$
Again, a necessary condition for a non‑zero extension is $J\leq J'$. There are two cases:
\begin{enumerate}[label=(\roman*)]
    \item If $m_0\leq m_1$, we have
$$
\phi_{\sigma_1}(\cO(m_0))+\phi_J+\phi_{\sigma_{n+1}}(\cO(k_{n+1}))\leq\phi_{\sigma_1}(\cO(m_1))+\phi_{J'}+\phi_{\sigma_{n+1}}(\cO(k_{n+1}+1))-1.
$$
For the extension to be nonzero, we would need $N_0=N_1+1$; but then
$$
\phi(\cO(m_0)\boxtimes\cL_{J}\boxtimes\cO(k)[N_0])\leq\phi(\cO(m_1)\boxtimes\cL_{J'}\boxtimes\cO(k+1)[N_1]),
$$
contradicting the assumed inequality of phases.

    \item If $m_0\geq m_1+2$, we have
$$
\phi_{\sigma_1}(\cO(m_0))+\phi_J+\phi_{\sigma_{n+1}}(\cO(k_{n+1}))<(\phi_{\sigma_1}(\cO(m_1))+1)+\phi_{J'}+(\phi_{\sigma_{n+1}}(\cO(k_{n+1}+1))-1).
$$
For the extension to be nonzero, we would need $N_0=N_1$; but then
$$
\phi(\cO(m_0)\boxtimes\cL_{J}\boxtimes\cO(k)[N_0])<\phi(\cO(m_1)\boxtimes\cL_{J'}\boxtimes\cO(k+1)[N_1]),
$$
again contradicting the phase ordering.
\end{enumerate}
Thus the desired extensions vanish, and the claimed description of stable objects follows.

\medskip\noindent\textbf{Support property.}
Each factor stability condition satisfies the support property
$$
|Z_i(E_i)|\geq C_i||E_i|| \quad \text{ for every }\sigma_i\text{-stable object } E_i.
$$
Since every product‑stable object splits as $E_1\boxtimes\cdots\boxtimes E_n$, we have
$$
|Z(E_1\boxtimes\cdots\boxtimes E_n)|=\prod_{i=1}^n|Z_i(E_i)|\geq\prod_{i=1}^n C_i \prod_{i=1}^n||E_i||=\left(\prod_{i=1}^n C_i\right)||E_1\boxtimes\cdots\boxtimes E_n||.
$$
Hence the support property holds for the glued stability condition.

\medskip\noindent\textbf{Uniqueness.}
Let $\sigma$ be a stability condition satisfying Conditions~\ref{item:GAAA-item1} and \ref{item:GAAA-item2}.
Its central charge is uniquely determined by the product formula. Moreover, the heart is generated by the stable objects of phase in $(0,1]$; these are precisely appropriate shifts of objects of the form $\cO(\star)\boxtimes\cL_J\boxtimes\cO(k_{n+1}+i)$. Therefore, the heart must coincide with the heart $\cA$ constructed above. 
Thus $\sigma=(Z,\cA)$ is uniquely determined.
\end{proof}

We have now shown that for every $(\sigma_1,\ldots,\sigma_n)\in\cS_n$, there exists a unique product-type stability condition $\sigma_1\boxtimes\cdots\boxtimes\sigma_n\in\Db((\bP^1)^n)$, and that it satisfies Property~\ref{item-property-a}. It remains to prove the following lemma.

\begin{lemma}\label{lemma:property-b}
Let $(\sigma_1,\ldots,\sigma_n),(\sigma'_1,\ldots,\sigma'_n)\in\cS_n$. Then
$$
\overline{\sigma_1\boxtimes\cdots\boxtimes\sigma_n}=\overline{\sigma'_1\boxtimes\cdots\boxtimes\sigma'_n}\in\Stab\Db((\bP^1)^n)/\bC
$$
if and only if $(\overline{\sigma_1},\ldots,\overline{\sigma_n})=(\overline{\sigma'_1},\ldots,\overline{\sigma'_n})\in\overline\cS_n$.
\end{lemma}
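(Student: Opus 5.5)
The plan is to handle the two directions separately, using the uniqueness statements in Corollary~\ref{cor:ALG-EUa} and Proposition~\ref{prop:GeoAlg-EUa}, and the description of stable objects in Property~\ref{item-property-a}.

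For the ``if'' direction, suppose $\sigma'_i=\sigma_i\cdot c_i$ for some $c_i\in\bC$. Put $c=\sum_i c_i$. I will show that $(\sigma_1\boxtimes\cdots\boxtimes\sigma_n)\cdot c$ satisfies Conditions~\ref{cond:prod-Z} and~\ref{cond:prod-phase} for the tuple $(\sigma'_1,\ldots,\sigma'_n)$. The central charge becomes $e^{c}\prod Z_{\sigma_i}(E_i)=\prod e^{c_i}Z_{\sigma_i}(E_i)=\prod Z_{\sigma'_i}(E_i)$. The action by $c_i$ preserves the set of $\sigma_i$-stable objects and shifts phases by $\frac1\pi\mathrm{Im}(c_i)$, so the product phase shifts by $\frac1\pi\mathrm{Im}(c)$. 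This is exactly the shift induced by $c$. By uniqueness, $\sigma'_1\boxtimes\cdots\boxtimes\sigma'_n=(\sigma_1\boxtimes\cdots\boxtimes\sigma_n)\cdot c$.

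For the ``only if'' direction, suppose $\sigma'_1\boxtimes\cdots\boxtimes\sigma'_n=(\sigma_1\boxtimes\cdots\boxtimes\sigma_n)\cdot c$. The first step is to recover each factor's set of stable objects. By Property~\ref{item-property-a}, the stable objects of the product are exactly the $E_1\boxtimes\cdots\boxtimes E_n$ with each $E_i$ stable in its factor. Fix stable objects $E_j$ for $j\neq i$, and consider objects $F$ for which $E_1\boxtimes\cdots\boxtimes F\boxtimes\cdots\boxtimes E_n$ is stable. This recovers the $\sigma_i$-stable objects up to shift, so $\sigma_i$ and $\sigma'_i$ have the same stable objects. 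To justify this, I need that $\boxtimes$-factorisations are unique up to shifts distributed among the factors. This should follow from the Künneth formula, which gives $\mathrm{End}$ of a box product as the tensor product of endomorphism algebras, together with restriction to fibres.

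The second step is to compare central charges and phases. Fix $E_j$ for $j\neq i$ and vary $E_i$. Multiplicativity gives
$$
\frac{Z_{\sigma'_i}(E_i)}{Z_{\sigma_i}(E_i)}=e^{c}\prod_{j\neq i}\frac{Z_{\sigma_j}(E_j)}{Z_{\sigma'_j}(E_j)},
$$
and the right-hand side is independent of $E_i$. So $Z_{\sigma'_i}=\lambda_iZ_{\sigma_i}$ for a constant $\lambda_i\in\bC^\times$. The same argument applied to phases shows the phase difference between $\sigma'_i$ and $\sigma_i$ is a constant $\theta_i$ on stable objects, with $e^{\sqrt{-1}\pi\theta_i}=\lambda_i/|\lambda_i|$. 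Writing $\lambda_i=e^{c_i}$ with $\mathrm{Im}(c_i)=\pi\theta_i$, the stability conditions $\sigma'_i$ and $\sigma_i\cdot c_i$ then have the same central charge and the same stable objects with the same phases. Since a stability condition is determined by its central charge together with its semistable objects and their phases (the heart is generated by stable objects of phase in $(0,1]$), this gives $\sigma'_i=\sigma_i\cdot c_i$, i.e. $\overline{\sigma'_i}=\overline{\sigma_i}$.

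The main obstacle is the first step of the ``only if'' direction: showing rigorously that the factors $E_i$ of a stable box product are determined up to shift. The shift ambiguity is harmless, because it is absorbed into the constant phase offsets $\theta_i$. I would rely on the explicit list of stable objects available here (line bundles and skyscraper sheaves in each factor) to check the uniqueness of factorisations directly, rather than proving it in general.
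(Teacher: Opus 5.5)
Your proposal is correct and uses the same overall strategy as the paper. First you show that $\sigma_i$ and $\sigma'_i$ have the same stable objects. Then you vary one factor while holding the others fixed. The difference is in the last step.

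The paper fixes stable line bundles $\cO(k_i),\cO(k_i+1)$ and parametrizes $\sigma_i$ by the quadruple $(\psi_i,\phi_i,m_{i,0},m_{i,1})$. It states without proof that the $\bC$-orbit of $\sigma_i$ is determined by $\phi_i$ and $m_{i,1}/m_{i,0}$, and it reads these quantities off from $\cL_I$ with $I$ a unit vector.

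You stay coordinate-free:
\begin{itemize}
\item Multiplicativity of the full central charge gives $Z_{\sigma'_i}=\lambda_i Z_{\sigma_i}$.
\item Additivity of phases on stable products gives a constant offset $\theta_i$.
\item You then conclude $\sigma'_i=\sigma_i\cdot c_i$, because a stability condition is determined by $Z$ together with its stable objects and their phases.
\end{itemize}
This avoids the explicit orbit criterion on $\Stab\Db(\bP^1)$. It also handles a geometric factor automatically, since the constant phase offset applies to skyscraper sheaves as well. The paper's version is more concrete but relies on those facts about $\Stab\Db(\bP^1)$. You also prove the ``if'' direction from the uniqueness statements, which the paper only calls straightforward.

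One small repair is needed in your Step 1. The set of $F$ for which $E_1\boxtimes\cdots\boxtimes F\boxtimes\cdots\boxtimes E_n$ is stable recovers the $\sigma'_i$-stable objects only if the fixed $E_j$ are also $\sigma'_j$-stable. That is not yet known at that point. Argue by inclusions instead:
\begin{itemize}
\item If $F$ is $\sigma_i$-stable, the product is stable for both product conditions.
\item Property (a) for the primed tuple, together with uniqueness of factorization, then shows that $F$, and each $E_j$, is stable for the corresponding primed factor.
\item Swapping the roles of the two tuples gives the reverse inclusion.
\end{itemize}
Uniqueness of factorization is easy here, because all factors are shifts of line bundles or skyscraper sheaves, distinguished by support, multidegree and shift. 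This simultaneous stability is also what Step 2 needs: it guarantees $Z_{\sigma'_j}(E_j)\neq 0$ and lets you apply Condition (ii) to both tuples.
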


\begin{proof}
The ``if" direction is straightforward. 
For the converse, suppose
$$
\overline{\sigma_1\boxtimes\cdots\boxtimes\sigma_n}=\overline{\sigma'_1\boxtimes\cdots\boxtimes\sigma'_n}\in\Stab\Db((\bP^1)^n)/\bC.
$$
Comparing the sets of stable objects shows that for each $i$ the stability conditions $\sigma_i$ and $\sigma_i'$ have the same stable objects.
Choose $\cO(k_i),\cO(k_i+1)\in\Db(\bP^1)$ that are $\sigma_i$-stable (hence also $\sigma_i'$-stable).
Then $\sigma_i$ is uniquely determined by the following data:
$$
    \phi_{\sigma_i}(\cO(k_i))=\psi_i,\quad
    \phi_{\sigma_i}(\cO(k_i+1))=\psi_i+\phi_i,\quad 
    |Z_{\sigma_i}(\cO(k_i))|=m_{i,0}, \quad
    |Z_{\sigma_i}(\cO(k_i+1))|=m_{i,1}.
$$
Similarly, $\sigma_i'$ is uniquely determined by a quadruple $(\psi_i'.\phi_i',m_{i,0}',m_{i,1}')$.
Note that $\overline{\sigma_i}=\overline{\sigma_i'}$ if and only if $\phi_i=\phi_i'$ and $\frac{m_{i,1}}{m_{i,0}}=\frac{m'_{i,1}}{m'_{i,0}}$.

For $I=(i_1,\ldots,i_n)\in\{0,1\}^n$, denote $\cL_I=\cO(k_1+i_1,\ldots,k_n+i_n)$.
With respect to $\sigma_1\boxtimes\cdots\boxtimes\sigma_n$, the object $\cL_I$ is stable with
$$
\phi(\cL_I)=\left(\sum_{\ell=1}^n\psi_\ell\right)+\left(\sum_{\ell=1}^n i_\ell\phi_\ell\right) \quad \text{ and } \quad
|Z(\cL_I)|=\left(\prod_{\ell=1}^nm_{\ell,0}\right)\left(\prod_{\ell=1}^n\frac{m_{\ell,i_\ell}}{m_{i,0}}\right).
$$
Similarly, $\cL_I$ is $(\sigma_1'\boxtimes\cdots\boxtimes\sigma_n')$-stable with
$$
\phi'(\cL_I)=\left(\sum_{\ell=1}^n\psi'_\ell\right)+\left(\sum_{\ell=1}^n i_\ell\phi'_\ell\right) \quad \text{ and } \quad
|Z'(\cL_I)|=\left(\prod_{\ell=1}^nm'_{\ell,0}\right)\left(\prod_{\ell=1}^n\frac{m'_{\ell,i_\ell}}{m'_{i,0}}\right).
$$
Since the two product stability condition lies in the same $\bC$-orbit, for every $I=(i_1,\ldots,i_n)\in\{0,1\}^n$, we have
$$
\sum_{\ell=1}^n i_\ell\phi_\ell=\sum_{\ell=1}^n i_\ell\phi_\ell' \quad \text{ and } \quad
\prod_{\ell=1}^n\frac{m_{\ell,i_\ell}}{m_{i,0}}=\prod_{\ell=1}^n\frac{m_{\ell,i_\ell}'}{m_{i,0}'}.
$$
Taking $I$ with a single non‑zero entry at the $i$-th coordinate yields $\phi_i=\phi_i'$ and $\frac{m_{i,1}}{m_{i,0}}=\frac{m'_{i,1}}{m'_{i,0}}$ for each $i$.
Therefore,
$$
(\overline{\sigma_1},\ldots,\overline{\sigma_n})=(\overline{\sigma'_1},\ldots,\overline{\sigma'_n})\in\overline\cS_n,
$$
which completes the proof.
\end{proof}

\begin{proof}[Proof of Theorem~\ref{thm:main-AG-Sn}]
The existence and uniqueness of product-type stability condition for each $(\sigma_1,\ldots,\sigma_n)\in\cS_n$, together with Property~\ref{item-property-a}, follow from Corollary~\ref{cor:ALG-EUa} (the purely algebraic case) and Proposition~\ref{prop:GeoAlg-EUa} (the case with one geometric factor). Property~\ref{item-property-b} is proved in Lemma~\ref{lemma:property-b}.
This completes the proof of Theorem~\ref{thm:main-AG-Sn}.
\end{proof}

\begin{rmk}\label{rmk:Sn-not-geometric}
By Theorem~\ref{thm:main-AG-Sn}\ref{item-property-a}, the product-type stability conditions on $\Db((\bP^1)^n)$ obtained from $\cS_n$ are \emph{not geometric} for $n\geq2$: 
skyscraper sheaves on $(\bP^1)^n$ are not stable with respect to any of them.
\end{rmk}

\begin{rmk}\label{rmk:SnxC->P^1n}
Recall from Remark~\ref{rmk:slice-C-Stab(P1)}\ref{item:slice-C-Stab(P1)} that, after fixing a homological mirror symmetry equivalence 
$$
\Mir\colon\Db(\bP^1)\xrightarrow{\sim} H^0\Tw\cW(\bC^\times,\fs),
$$
we may choose the slice $\Stab^{c=0}\Db(\bP^1)$ of the $\bC$-action
$$
\Stab^{c=0}\Db(\bP^1) \subseteq \Stab\Db(\bP^1),
$$
on which the central charges take the form
$$
Z_a(E)=\int_{[\Mir(E)]}\exp\left(z+\frac{e^a}{z}\right)\frac{\dd z}{z}.
$$
We can therefore use an $n$-tuple $(a_1,\ldots,a_n)\in\bC^n$ to parametrize elements of 
$$
\overline{\cS}_n\subseteq(\Stab^{c=0}\Db(\bP^1))^n\subseteq(\Stab\Db(\bP^1))^n.
$$
Together with the free $\bC$-action yields an embedding
$$
\overline{\cS}_n\times\bC\hookrightarrow(\Stab\Db(\bP^1))^n; \qquad 
\left((\sigma_{a_1},\ldots,\sigma_{a_n}),c\right)\mapsto(\sigma_{a_1}\boxtimes\cdots\boxtimes\sigma_{a_n})\cdot c.
$$
\end{rmk}

\subsection{Stable implies special Lagrangian.}
\label{subsection:Pf-of-Main-SG}
We now prove Theorem~\ref{thm:IntroMainThmSG}, restated as follows:

\begin{thm}\label{thm:SG-main-thm}
For the product-type stability conditions constructed in Theorem~\ref{thm:main-AG-Sn},
$$
\overline{\cS}_n\times\bC\subseteq\Stab\Db((\bP^1)^n)\cong\Stab H^0\Tw(\cW((\bC^\times)^n,\fs_n)), 
$$
where $\overline{\cS}_n\subseteq\bC^n$ is parametrized by $(a_1,\ldots,a_n)$ as in Remark~\ref{rmk:SnxC->P^1n}.
Write
$$
(a_1,\ldots,a_n,c)\mapsto\sigma_{\mathbf{a},c}=(Z_{\mathbf{a},c},\cP_{\mathbf{a},c})\in\Stab H^0\Tw(\cW((\bC^\times)^n,\fs_n)).
$$
Then:
\begin{enumerate}[label=(\alph*)]
    \item\label{item:SGthm-item-1} The central charge $Z_{\mathbf{a},c}$ is given by the integral
$$
Z_{\mathbf{a},c}(\bL)=\int_{[\bL]}\exp\left(z_1+\cdots+z_n+c+\frac{e^{a_1}}{z_1}+\cdots+\frac{e^{a_n}}{z_n}\right)\frac{\dd z_1}{z_1}\cdots\frac{\dd z_n}{z_n}.
$$
    \item\label{item:SGthm-item-2} If an object $\bL$ is $\sigma_{\mathbf{a},c}$-stable, then it admits a special Lagrangian representative (up to isotopy) in $((\bC^\times)^n,\omega_n)$ with respect to the holomorphic $n$-form
$$
\Omega_{\mathbf{a},c}\coloneqq\exp\left(z_1+\cdots+z_n+c+\frac{e^{a_1}}{z_1}+\cdots+\frac{e^{a_n}}{z_n}\right)\frac{\dd z_1}{z_1}\cdots\frac{\dd z_n}{z_n}.
$$
\end{enumerate}
\end{thm}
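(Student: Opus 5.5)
The plan is to transport the product-type stability conditions of Theorem~\ref{thm:main-AG-Sn} through the equivalence $\Mir_n$ of Lemma~\ref{lemma:Mirror-(P1^)n-H0TwW}. Define $\sigma_{\mathbf{a},c}$ as the image of $(\sigma_{a_1}\boxtimes\cdots\boxtimes\sigma_{a_n})\cdot c$ under $\Mir_n$. Part~\ref{item:SGthm-item-1} is then a statement about central charges on $K_0$, and part~\ref{item:SGthm-item-2} reduces, via Property~\ref{item-property-a}, to the one-dimensional statement of Corollary~\ref{cor:StabDbP^1viaHKK}.

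\textbf{Central charge.} The classes of line bundles $\cO(k_1,\ldots,k_n)=\cO(k_1)\boxtimes\cdots\boxtimes\cO(k_n)$ generate $K_0(\Db((\bP^1)^n))$, for instance from the products of $\cO,\cO(1)$. Both sides of the claimed formula are additive, so it suffices to check it on these generators. By Condition~\ref{cond:prod-Z} and the $\bC$-action,
$$
Z_{\mathbf{a},c}(\Mir_n(\cO(k_1,\ldots,k_n)))=e^c\prod_i Z_{a_i}(\cO(k_i)).
$$
Each factor is $\int_{L_{k_i}}\exp(z_i+e^{a_i}/z_i)\,\dd z_i/z_i$ by Corollary~\ref{cor:StabDbP^1viaHKK} with $c=0$. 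By Lemma~\ref{lemma:Mirror-(P1^)n-H0TwW}, $\Mir_n(\cO(k_1,\ldots,k_n))$ is isotopic to $L_{k_1}\times\cdots\times L_{k_n}$. Fubini then turns the product of one-dimensional integrals into $\int_{L_{k_1}\times\cdots\times L_{k_n}}\Omega_{\mathbf{a},c}$. What remains is to justify that the integral of $\Omega_{\mathbf{a},c}$ depends only on the class $[\bL]$, i.e.\ is invariant under the isotopy (cylindrization). This should follow because $\Omega_{\mathbf{a},c}$ is closed and decays exponentially in the directions where the Lagrangian ends are allowed to go, namely the directions avoiding the stop. I would argue this factorwise, as in \cite{HKK}.

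\textbf{Stable implies special.} Let $\bL$ be $\sigma_{\mathbf{a},c}$-stable. By Theorem~\ref{thm:main-AG-Sn}\ref{item-property-a}, $\Mir_n^{-1}(\bL)\cong E_1\boxtimes\cdots\boxtimes E_n$ with each $E_i$ being $\sigma_{a_i}$-stable, say of phase $\phi_i$. From Okada's list, each $E_i$ is a shift of a line bundle or of a skyscraper sheaf. Corollary~\ref{cor:StabDbP^1viaHKK} gives special Lagrangians $L_i\subseteq\bC^\times$ representing $\Mir(E_i)$, with $\exp(z_i+e^{a_i}/z_i)\,\dd z_i/z_i|_{L_i}\in\bR_{>0}e^{i\pi\phi_i}\,\mathrm{vol}_{L_i}$. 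Two facts then finish the argument:
\begin{itemize}
\item the product $L_1\times\cdots\times L_n$ is Lagrangian for the split form $\omega_n$;
\item $\Omega_{\mathbf{a},c}$ restricts to $e^c\bigwedge_i\Omega_{a_i}|_{L_i}$, which has constant phase $\sum_i\phi_i+\mathrm{Im}(c)/\pi$. This matches the phase of $\bL$ by Condition~\ref{cond:prod-phase}.
\end{itemize}

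\textbf{Main obstacle.} The hard step is identifying $\Mir_n(E_1\boxtimes\cdots\boxtimes E_n)$ with (an isotopy of) $\Mir(E_1)\times\cdots\times\Mir(E_n)$ for every stable $E_i$, including skyscrapers, and not just for line bundles as stated in Lemma~\ref{lemma:Mirror-(P1^)n-H0TwW}. I would first try to get this from compatibility of the two K\"unneth equivalences: GPS sends $L_1\otimes\cdots\otimes L_n$ to the cylindrization of the product, and \cite{BZFN} sends $E_1\otimes\cdots\otimes E_n$ to $E_1\boxtimes\cdots\boxtimes E_n$. Hence $\Mir_n=\Mir^{\otimes n}$ carries external products to product Lagrangians. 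The "up to isotopy" clause in the statement absorbs the difference between the product and its cylindrization.
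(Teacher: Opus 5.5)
Your proposal is correct and follows the paper's proof essentially step for step. Part (a) is checked on the generators $\Mir_n(\cO(k_1,\ldots,k_n))$ using the product formula for $Z$, the one-dimensional integrals from Corollary~\ref{cor:StabDbP^1viaHKK}, Fubini, and the isotopy to $L_{k_1}\times\cdots\times L_{k_n}$. Part (b) combines Theorem~\ref{thm:main-AG-Sn}(a), Corollary~\ref{cor:StabDbP^1viaHKK}, and the splitting of $\omega_n$ and $\Omega_{\mathbf{a},c}$. The two points you flag, namely isotopy invariance of the integral and identifying $\Mir_n(E_1\boxtimes\cdots\boxtimes E_n)$ with a product via compatibility of the two K\"unneth equivalences (including skyscraper factors), are exactly what the paper uses without further comment.
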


\begin{proof}[Proof of \ref{item:SGthm-item-1}]
It suffices to verify the central-charge formula on objects of the form $\Mir_n(\cO(k_1,\ldots,k_n))$ with $k_1,\ldots,k_n\in\bZ$, as they generate the numerical Grothendieck group of the category.
Denote $L_{k_i}$ the mirror Lagrangian of $\cO(k_i)\in\Db(\bP^1)$. Then we have:
\begin{align*}
Z_{\mathbf{a},c}(\Mir_n(\cO(k_1,\ldots,k_n))) &= Z_{(\sigma_{a_1}\boxtimes\cdots\boxtimes\sigma_{a_n})\cdot c}(\cO(k_1,\ldots,k_n)) \\
&= e^c \prod_{i=1}^n Z_{\sigma_{a_i}}(\cO(k_i)) \\
& = e^c \prod_{i=1}^n \int_{[L_{k_i}]}\exp\left(z_i+\frac{e^{a_i}}{z_i}\right)\frac{\dd z_i}{z_i} \\
& = \int_{[L_{k_1}]\times\cdots\times[L_{k_n}]}\exp\left(z_1+\cdots+z_n+c+\frac{e^{a_1}}{z_1}+\cdots+\frac{e^{a_n}}{z_n}\right)\frac{\dd z_1}{z_1}\cdots\frac{\dd z_n}{z_n} \\
& = \int_{[\Mir_n(\cO(k_1,\ldots,k_n))]}\exp\left(z_1+\cdots+z_n+c+\frac{e^{a_1}}{z_1}+\cdots+\frac{e^{a_n}}{z_n}\right)\frac{\dd z_1}{z_1}\cdots\frac{\dd z_n}{z_n}.
\end{align*}
The last equality follows from the fact that $\Mir_n(\cO(k_1,\ldots,k_n))$ admits a representative that is isotopic to the product $L_{k_1}\times\cdots\times L_{k_n}$ (see Lemma~\ref{lemma:Mirror-(P1^)n-H0TwW}). This proves \ref{item:SGthm-item-1}.
\end{proof}

\begin{proof}[Proof of \ref{item:SGthm-item-2}]
An object $\bL$ is $\sigma_{\mathbf{a},c}$-stable if and only if $\Mir_n^{-1}(\bL)$ is $(\sigma_{a_1}\boxtimes\cdots\boxtimes\sigma_{a_n})$-stable, which by Theorem~\ref{thm:main-AG-Sn}\ref{item-property-a} holds if and only if 
$$
\Mir_n^{-1}(\bL)\cong E_1\boxtimes\cdots\boxtimes E_n
$$
where each $E_i$ is $\sigma_{a_i}$-stable. By Corollary~\ref{cor:StabDbP^1viaHKK}, $\Mir(E_i)$ admits a special Lagrangian representative with respect to the holomorphic $1$-form
$$
\exp\left(z+\frac{e^{a_i}}{z}\right)\frac{\dd z}{z}.
$$
On the other hand, we have
$$
\bL\cong\Mir_n(E_1\boxtimes\cdots\boxtimes E_n),
$$
which admits a representative isotopic to the product
$$
\Mir(E_1)\times\cdots\times\Mir(E_n).
$$
This is a Lagrangian submanifold of $((\bC^\times)^n,\omega_n)$ because the symplectic form $\omega_n$ is split, and it is a special Lagrangian submanifold with respect to the holomorphic $n$-form
\begin{align*}
\Omega_{\mathbf{a},c} &=\exp\left(z_1+\cdots+z_n+c+\frac{e^{a_1}}{z_1}+\cdots+\frac{e^{a_n}}{z_n}\right)\frac{\dd z_1}{z_1}\cdots\frac{\dd z_n}{z_n} \\
&=\exp(c)\cdot\prod_{i=1}^n \left(\exp\left(z_i+\frac{e^{a_i}}{z_i}\right)\frac{\dd z_i}{z_i}\right),
\end{align*}
since each factor $\Mir(E_i)$ is of constant phase with respect to $\exp\left(z+\frac{e^{a_i}}{z}\right)\frac{\dd z}{z}$.
This completes the proof.
\end{proof}

\section{More product-type stability conditions.}
This section contains the proof of Theorem~\ref{thm:embedding-P1-E}.
The embedding via product stability
$$
(\Stab\Db(\bP^1)/\bC)\times (\Stab\Db(\bP^1)/\bC)\hookrightarrow\Stab\Db(\bP^1\times\bP^1)/\bC
$$
is established in Section~\ref{subsec:last-P1xP1}.
The analogous embedding for two elliptic curves $X_1,X_2$,
$$
(\Stab\Db(X_1)/\bC)\times (\Stab\Db(X_2)/\bC)\hookrightarrow\Stab\Db(X_1\times X_2)/\bC,
$$
is treated in Section~\ref{subsec:last-ExE}.

\subsection{Product-type stability conditions on \texorpdfstring{$\bP^1\times\bP^1$}{}.}
\label{subsec:last-P1xP1}

\begin{thm}
\label{thm:P1xP1}
For every $(\sigma_1,\sigma_2)\in(\Stab\Db(\bP^1))^2$, there exists a unique stability condition, denoted by $\sigma_1\boxtimes\sigma_2\in\Stab\Db(\bP^1\times\bP^1)$, such that:
\begin{enumerate}[label=(\roman*)]
\item\label{item:p1xp1thm-item1} Its central charge satisfies
$$
Z_{\sigma_1\boxtimes\sigma_2}(E_1\boxtimes E_2)=Z_{\sigma_1}(E_1)Z_{\sigma_2}(E_2)
$$
for all objects $E_1,E_2\in\Db(\bP^1)$.
\item\label{item:p1xp1thm-item2} If each $E_i\in\Db(\bP^1)$ is $\sigma_i$-stable of phase $\phi_i$, then $E_1\boxtimes E_2$ is $(\sigma_1\boxtimes\sigma_2)$-stable of phase $\phi_1+\phi_2$.
\end{enumerate}
Moreover, these product-type stability conditions satisfy:
\begin{enumerate}[label=(\alph*)]
\item\label{item:p1xp1thm-itema} An object $E_1\boxtimes E_2$ is $(\sigma_1\boxtimes\sigma_2)$-stable if and only if $E_i$ is $\sigma_i$-stable for both $i=1,2$.
\item\label{item:p1xp1thm-itemb} Two elements $(\sigma_1,\sigma_2),(\sigma'_1,\sigma'_2)\in(\Stab\Db(\bP^1))^2$ yield the same product stability condition up to the $\bC$-action, i.e.,
$$
\overline{\sigma_1\boxtimes\sigma_2}=\overline{\sigma'_1\boxtimes\sigma'_2}\in\Stab\Db(\bP^1\times\bP^1)/\bC,
$$
if and only if $(\overline{\sigma_1},\overline{\sigma_2})=(\overline{\sigma'_1},\overline{\sigma'_2})$ in $(\Stab\Db(\bP^1)/\bC)^2$.
\end{enumerate}
Consequently, there is a well-defined embedding
$$
(\Stab\Db(\bP^1)/\bC)\times (\Stab\Db(\bP^1)/\bC)\hookrightarrow\Stab\Db(\bP^1\times\bP^1)/\bC.
$$
\end{thm}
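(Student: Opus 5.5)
The plan is to split according to how many of $\sigma_1,\sigma_2$ are geometric. If at most one is geometric, then $(\sigma_1,\sigma_2)\in\cS_2$. In that case Theorem~\ref{thm:main-AG-Sn} with $n=2$ already gives existence and uniqueness of $\sigma_1\boxtimes\sigma_2$, together with properties \ref{item:p1xp1thm-item1} and \ref{item:p1xp1thm-item2}. Property \ref{item:p1xp1thm-itema} follows from Theorem~\ref{thm:main-AG-Sn}\ref{item-property-a}: if $E_1\boxtimes E_2$ is stable then $E_1\boxtimes E_2\cong E_1'\boxtimes E_2'$ with $E_i'$ stable, and comparing restrictions to fibres shows $E_i\cong E_i'$ up to shift. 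So the new content is the case $\sigma_1,\sigma_2\in\Stab^\Geo\Db(\bP^1)$.

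For that case I would use Okada's description (up to the $\bC$-action): a geometric $\sigma_i$ has central charge $Z_i(E)=-\deg E+w_i\,\rank E$ with $w_i\in\bH$. Using the $\bC$-action on each factor, we may take $\sigma_i$ in this normal form.

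For $E\in\Db(\bP^1\times\bP^1)$ write $\ch(E)=(r,(d_1,d_2),\ch_2)$. Expanding the product formula on the generators $\cO(a,b)$ and extending linearly gives
$$Z(E)=\ch_2-w_2d_1-w_1d_2+w_1w_2r,$$
which is exactly $\pm\int e^{-(\beta+\sqrt{-1}\omega)}\ch(E)$ for the complexified class $\beta+\sqrt{-1}\omega=(w_1,w_2)$. Since $\mathrm{Im}\,w_i>0$, the class $\omega$ is ample. So the candidate $\sigma_1\boxtimes\sigma_2$ is the Bridgeland/Arcara--Bertram tilt stability condition $\sigma_{\beta,\omega}$ on $\Db(\bP^1\times\bP^1)$, acted on by a constant in $\bC$ to fix the phase normalisation.

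It then remains to verify \ref{item:p1xp1thm-item2}. The $\sigma_i$-stable objects are shifts of line bundles and skyscrapers, so we need the following objects to be $\sigma_{\beta,\omega}$-stable:
\begin{itemize}
\item skyscrapers $k(x,y)$, which is standard for geometric stability;
\item line bundles $\cO(a,b)$, which holds by the Arcara--Miles result for surfaces without negative curves;
\item the sheaves $\cO(a)\boxtimes k(y)$ and $k(x)\boxtimes\cO(b)$, which are line bundles on curves $C$ with $C^2=0$.
\end{itemize}
For the last family I would argue directly. A destabilising subobject in the tilted heart must have $\ch_1$ supported on multiples of the fibre class, and a short wall computation using $C^2=0$ and Bogomolov should rule out actual walls. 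Then I would check the phases: the phase of $E_1\boxtimes E_2$ equals $\phi_1+\phi_2$ rather than differing by an even integer. I would prove this by continuity in $(w_1,w_2)$, starting from the large volume limit and using that the objects stay stable throughout $\bH\times\bH$.

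For uniqueness, the central charge is forced by \ref{item:p1xp1thm-item1}. All skyscrapers are stable of the common phase $\phi_\sigma(k(x))+\phi_\sigma(k(y))$, so any solution is geometric. Bridgeland's characterisation of geometric stability conditions on surfaces then determines the heart from $Z$ and this phase, so the solution is unique.

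For \ref{item:p1xp1thm-itemb}, I would compare the orbits of central charges under $\bC$. The ratios $Z(\cO(1,0))/Z(\cO)$ and $Z(\cO(0,1))/Z(\cO)$ recover $w_1,w_2$. The phase differences recover the branches in the algebraic cases, as in Lemma~\ref{lemma:property-b}. In mixed cases the geometric versus non-geometric nature is detected by stability of skyscrapers, so no geometric pair can be identified with a pair in $\cS_2$.

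I expect the main obstacle to be the stability of the fibre-supported sheaves $\cO(a)\boxtimes k(y)$ and the bookkeeping of their phases across all of $\bH\times\bH$. I would attack it first with the explicit wall analysis for objects of class $(0,(0,1),\ch_2)$.
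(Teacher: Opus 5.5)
\paragraph{Overall comparison.} Your overall route is the paper's. You reduce to $\cS_2$ via Theorem~\ref{thm:main-AG-Sn} and identify the product central charge with $\pm Z_{B,H}$ as in Lemma~\ref{lemma:Z-product-geometric}. You use Arcara--Miles for line bundles, take uniqueness from the characterisation of geometric stability conditions by $Z$ and the skyscraper phase, and read off (b) from ratios such as $Z(\cO(1,0))/Z(\cO)$. Your fibre-restriction argument for (a) in the $\cS_2$ case is more careful than the paper's citation.

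\paragraph{The gap.} It is exactly the step that is new when both factors are geometric: stability of $E=\cO(k)\boxtimes k(y)$, supported on $C=\bP^1\times\{y\}$. Your opening claim, that a destabilising subobject in the tilted heart must have $\ch_1$ a multiple of the fibre class, is not justified. The heart does contain subobjects of other shapes. For every $(B,H)$ there is a unique $a$ with $\cO(k,a)\in\cT_{B,H}$ and $\cO(k,a-1)\in\cF_{B,H}$, and restriction to $C$ gives a short exact sequence $0\to\cO(k,a)\to E\to\cO(k,a-1)[1]\to 0$ in $\cA_{B,H}$. This particular subobject does not destabilise, since by the product formula $Z(\cO(k,a))/Z(E)=a-w_2\notin\bR$. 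But it shows that subobjects of $E$ are not just subsheaves $\iota_*\cO(\ell)$, and that $H^{-1}$ of the quotient need not vanish. So the paper's own step asserting that vanishing cannot simply be imported either. You need an argument covering arbitrary subobjects, and ``a short wall computation'' does not yet supply one.

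\paragraph{How to close it.} Your ``$C^2=0$ plus Bogomolov'' idea works with the full discriminant $\Delta=\ch_1^2-2\ch_0\ch_2$, not $\Delta_H$ (note $\Delta_H(E)=(H.C)^2>0$).
\begin{itemize}
\item With $\langle v,w\rangle=v_1.w_1-v_0w_2-v_2w_0$ one has $Z_{B,H}=\langle e^{B+\sqrt{-1}H},-\rangle$. The real and imaginary parts of $e^{B+\sqrt{-1}H}$ span a positive definite plane for this form of signature $(2,2)$, so $\Delta$ is negative definite on $\ker Z_{B,H}$.
\item $\bP^1\times\bP^1$ has no curves of negative self-intersection, so torsion sheaves have $\ch_1^2\ge 0$. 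The usual wall-crossing induction from large volume then gives $\Delta\ge 0$ for all $\sigma_{B,H}$-semistable objects.
\item Suppose $E$ were strictly semistable on a wall with factors $A,Q$. Aligned central charges together with $\Delta(A),\Delta(Q)\ge 0$ force $\langle A,Q\rangle\ge 0$. Since $\Delta(E)=C^2=0$, all three quantities vanish.
\item Negative definiteness on $\ker Z$ then makes $v(A)$ and $v(Q)$ proportional to the primitive class $(0,D_2,k)$, which is impossible.
\item $E$ is stable at large volume and $\bH\times\bH$ is connected, so $E$ is stable everywhere. Then either your continuity argument or the paper's parity-plus-interval argument fixes the phase.
\end{itemize}

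\paragraph{Part (a), geometric case.} Separately, you never address the ``only if'' half of (a) when both factors are geometric. $\Db(\bP^1)$ contains indecomposable, non-stable objects $\cO_{nx}$ with $n\ge 2$. You must note that $E_1\boxtimes\cO_{nx}$ is an iterated extension of copies of $E_1\boxtimes k(x)$ of equal phase, hence not stable.
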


The existence and uniqueness of product-type stability conditions of $(\sigma_1,\sigma_2)\in(\Stab\Db(\bP^1))^2$ has been established in Theorem~\ref{thm:main-AG-Sn} when at most one of the $\sigma_i$ is geometric. It therefore remains to treat the case where both factors are geometric:
$$
\sigma_1,\sigma_2\in\Stab^\Geo\Db(\bP^1).
$$
If such a product stability condition exists, then skyscraper sheaves on $\bP^1\times\bP^1$ must be stable; so we are looking for \emph{geometric} stability conditions on $\Db(\bP^1\times\bP^1)$.

Geometric stability conditions on smooth projective surfaces have been constructed in \cite{BriK3,ArcaraBertram}. 
Given $B,H\in H^{1,1}(\bP^1\times\bP^1,\bR)$ with $H$-ample, there exists a unique geometric stability condition $\sigma_{B,H}$ on $\Db(\bP^1\times\bP^1)$ wtih central charge
$$
Z_{B,H}(E)=-\int_{\bP^1\times\bP^1} e^{-(B+\sqrt{-1}H)}\ch(E)
$$
and for which skyscraper sheaves are stable of phase $1$.

We observe that this central charge admits a product decomposition:

\begin{lemma}
\label{lemma:Z-product-geometric}
Let $D_1,D_2$ be the generator of $H^{1,1}(\bP^1\times\bP^1,\bZ)$ corresponding to the two fiber directions. Write
$$
B=b_1D_1+b_2D_2, \quad H=h_1D_1+h_2D_2, \quad h_1,h_2>0.
$$
Then
$$
Z_{B,H}(E_1\boxtimes E_2)=-Z_1(E_1)Z_2(E_2)
$$
where 
$$
Z_i(E_i)\coloneqq -\deg(E_i)+\left(b_i+\sqrt{-1}h_i\right)\rank(E_i).
$$
\end{lemma}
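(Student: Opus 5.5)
The plan is a direct computation using multiplicativity of the Chern character under external tensor product. Write $p_1,p_2$ for the two projections of $\bP^1\times\bP^1$, and take $D_i$ to be the pullback under $p_i$ of the point class on the $i$-th factor. Then $D_1^2=D_2^2=0$ and $D_1D_2=[\mathrm{pt}]$, so $\int_{\bP^1\times\bP^1}D_1D_2=1$. For $E_1,E_2\in\Db(\bP^1)$ we have $E_1\boxtimes E_2=p_1^*E_1\otimes p_2^*E_2$. Hence
$$
\ch(E_1\boxtimes E_2)=p_1^*\ch(E_1)\cdot p_2^*\ch(E_2),
$$
with $\ch(E_i)=\rank(E_i)+\deg(E_i)[\mathrm{pt}]$ on $\bP^1$. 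Pulled back, this gives $p_i^*\ch(E_i)=\rank(E_i)+\deg(E_i)D_i$.

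The key step is to split the exponential. Since $D_1$ and $D_2$ commute and each squares to zero, we get $e^{-(B+\sqrt{-1}H)}=e^{-\beta_1D_1}e^{-\beta_2D_2}=(1-\beta_1D_1)(1-\beta_2D_2)$, where $\beta_i\coloneqq b_i+\sqrt{-1}h_i$. Multiplying out, the integrand factors as
$$
\prod_{i=1}^2(1-\beta_iD_i)\bigl(\rank(E_i)+\deg(E_i)D_i\bigr)=\prod_{i=1}^2\Bigl(\rank(E_i)+\bigl(\deg(E_i)-\beta_i\rank(E_i)\bigr)D_i\Bigr).
$$
Only the $D_1D_2$ coefficient survives integration. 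That coefficient is $\prod_i\bigl(\deg(E_i)-\beta_i\rank(E_i)\bigr)=\prod_i\bigl(-Z_i(E_i)\bigr)=Z_1(E_1)Z_2(E_2)$, because $Z_i(E_i)=-\deg(E_i)+\beta_i\rank(E_i)$.

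With the overall minus sign in $Z_{B,H}$, this yields $Z_{B,H}(E_1\boxtimes E_2)=-Z_1(E_1)Z_2(E_2)$, as claimed.

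There is no real obstacle here. The only care needed is in the sign and normalization conventions: $D_i$ must be the fiber class pulled back from the $i$-th factor, so that $\int D_1D_2=1$ and the degree on $\bP^1$ pairs correctly. It also helps to check the formula on $\cO\boxtimes\cO$ and on a skyscraper sheaf as a sanity test. By linearity it suffices to verify the identity for all $E_1,E_2$, and the computation above already does this.
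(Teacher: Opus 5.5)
Your proof is correct and is essentially the same direct Chern-character computation as the paper's. The paper checks the identity only on the line bundles $\cO(k_1,k_2)$, which suffices by biadditivity, by expanding $-\tfrac{1}{2}\left(c_1(\cO(k_1,k_2))-B-\sqrt{-1}H\right)^2$ and refactoring; you instead factor $e^{-(B+\sqrt{-1}H)}\ch(E_1\boxtimes E_2)$ into classes pulled back from the two factors, which handles all $E_1,E_2$ at once and makes the product structure transparent.
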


\begin{proof}
It suffices to verify the equality for line bundles $E_1\boxtimes E_2=\cO(k_1,k_2)$ with $k_1,k_2\in\bZ$:
\begin{align*}
Z_{B,H}(\cO(k_1,k_2)) & = -\int e^{-(B+\sqrt{-1}H)}e^{c_1(\cO(k_1,k_2))} \\
& = -\frac{1}{2} \left(c_1(\cO(k_1,k_2))-B-\sqrt{-1}H\right)^2 \\
& = -\frac{1}{2}c_1\left(\cO(k_1,k_2)\right)^2+c_1(\cO(k_1,k_2)).\left(B+\sqrt{-1}H\right)-\frac{1}{2}\left(B+\sqrt{-1}H\right)^2 \\
& = -k_1k_2 + k_1\left(b_2+\sqrt{-1}h_2\right) + k_2\left(b_1+\sqrt{-1}h_1\right) - \left(b_1b_2 - h_1h_2 + \sqrt{-1} (b_1h_2+b_2h_1)\right) \\
& = - \left(-k_1+b_1+\sqrt{-1}h_1\right)\left(-k_2+b_2+\sqrt{-1}h_2\right) \\
& = -Z_1(\cO(k_1))Z_2(\cO(k_2)).
\end{align*}
\end{proof}

Recall that every geometric stability condition on $\bP^1$ can be written as $\sigma_\tau\cdot c$ for some $\tau\in\bH$ and $c\in\bC$, where $\sigma_\tau=(Z_\tau,\cP_\tau)$ is the unique stability condition on $\bP^1$ such that:
\begin{itemize}
    \item $Z_\tau(E)=-\deg(E)+\tau\cdot\rank(E)$ for every $E\in\Db(\bP^1)$.
    \item Skyscraper sheaves are $\sigma_\tau$-stable and of phase $1$.
\end{itemize}

\begin{prop}
\label{prop:GeomxGeom}
Let $\sigma_1,\sigma_2\in\Stab^\Geo\Db(\bP^1)$ be two geometric stability conditions. Write
$$
\sigma_1=\sigma_{\tau_1}\cdot c_1,  \qquad
\sigma_2=\sigma_{\tau_2}\cdot c_2.
$$
for some $\tau_1,\tau_2\in\bH$ and $c_1,c_2\in\bC$. 
Define
$$
B=\text{Re}(\tau_1)D_1+\text{Re}(\tau_2)D_2, \qquad    
H=\text{Im}(\tau_1)D_1+\text{Im}(\tau_2)D_2.
$$
Then
$$
\sigma_{B,H}\cdot\left(c_1+c_2+\sqrt{-1}\pi\right)
$$
is the unique stability condition on $\Db(\bP^1\times\bP^1)$ that satisfies the product‑type conditions~\ref{item:p1xp1thm-item1} and \ref{item:p1xp1thm-item2} of Theorem~\ref{thm:P1xP1}.
\end{prop}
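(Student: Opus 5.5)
The plan is to check the two product-type conditions of Theorem~\ref{thm:P1xP1} for $\sigma\coloneqq\sigma_{B,H}\cdot(c_1+c_2+\sqrt{-1}\pi)$ directly, and then derive uniqueness from the rigidity of geometric stability conditions.

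\emph{Central charge.} Since $Z_{\sigma_i}=e^{c_i}Z_{\tau_i}$ and $Z_{\tau_i}(E)=-\deg(E)+\tau_i\rank(E)$ is exactly the $Z_i$ of Lemma~\ref{lemma:Z-product-geometric}, that lemma gives
$$
e^{c_1+c_2+\sqrt{-1}\pi}Z_{B,H}(E_1\boxtimes E_2)=e^{c_1}Z_{\tau_1}(E_1)\,e^{c_2}Z_{\tau_2}(E_2).
$$
This is condition~\ref{item:p1xp1thm-item1}. The identity is checked on $\cO(k_1,k_2)$, whose classes span the lattice; the lemma reduces to that case anyway.

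\emph{Stability and phases.} The $\bC$-action commutes with everything, so I may assume $c_1=c_2=0$. Then I must show that $E_1\boxtimes E_2$ is $\sigma_{B,H}$-stable of phase $\phi_1+\phi_2-1$ whenever each $E_i$ is $\sigma_{\tau_i}$-stable of phase $\phi_i$. Up to shift, the $\sigma_{\tau_i}$-stable objects are the line bundles $\cO(k)$, with phase in $(0,1)$, and the skyscrapers $k(x)$, with phase $1$. This leaves three cases.
\begin{enumerate}[label=(\arabic*)]
\item $k(x)\boxtimes k(y)=k(p)$: it is stable of phase $1=1+1-1$ by the definition of $\sigma_{B,H}$.
\item $\cO(k)\boxtimes k(y)$ and $k(x)\boxtimes\cO(k)$: these are line bundles $\cO_C(k)$ on fibres $C\cong\bP^1$ with $C^2=0$. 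They lie in the tilted heart $\Coh^{B,H}$, and I will argue stability via the standard wall analysis for torsion sheaves supported on a curve with $C^2=0$. A subobject in the heart is forced to have the shape $F\to\cO_C(k)$ with $F$ torsion-free of bounded $H$-slope, and $C^2=0$ rules out walls.
\item $\cO(k_1,k_2)$: this needs stability of line bundles on $\bP^1\times\bP^1$ for all $(B,H)$. I will invoke the result of Arcara--Miles that line bundles on surfaces without negative curves are $\sigma_{B,H}$-stable. Shifted to the appropriate degree, they sit in $\Coh^{B,H}$ or $\Coh^{B,H}[-1]$.
\end{enumerate}

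For the phases, the central-charge identity already fixes them modulo $2$, so only the correct integer lift needs to be checked. I will do this by continuity. On the connected parameter space $\bH\times\bH$, both $\phi_{\sigma_{B,H}}(E_1\boxtimes E_2)$ and $\phi_{\tau_1}(E_1)+\phi_{\tau_2}(E_2)-1$ vary continuously, since the objects stay stable throughout. They agree mod $2$, so it is enough to check equality at a single point. For example, for $\cO(k_1,k_2)$ I take $b_i<k_i$, where both sides lie in $(-1,1)$ and the heart membership of $\cO(k_1,k_2)$ is explicit.

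\emph{Uniqueness.} Let $\sigma'$ satisfy~\ref{item:p1xp1thm-item1} and~\ref{item:p1xp1thm-item2}. Then $Z_{\sigma'}=Z_\sigma$, and skyscrapers $k(p)=k(x)\boxtimes k(y)$ are $\sigma'$-stable of phase $2$, the same as for $\sigma$. Bridgeland's characterization of geometric stability conditions on surfaces (cf.~\cite{BriK3}: skyscrapers stable of a common phase together with a fixed central charge determine the heart as the tilt $\Coh^{B,H}$) then forces $\sigma'=\sigma$.

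I expect case~(2) of the stability step to be the main obstacle: stability of $\cO_C(k)$ and of line bundles uniformly in all $(B,H)$, including the parameter values where $\Im Z$ is small. If the cited results do not apply cleanly, I would fall back on an explicit wall computation in the $(b_1,b_2,h_1,h_2)$-space using Bogomolov's inequality.
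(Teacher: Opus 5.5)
\noindent Apart from case~(2), your plan follows the paper's proof: central charge from Lemma~\ref{lemma:Z-product-geometric}, skyscrapers by definition of $\sigma_{B,H}$, line bundles via Arcara--Miles, phases fixed by the congruence modulo~$2$, and uniqueness because a geometric stability condition is determined by its central charge and the phase of skyscrapers. Your continuity argument for the integer lift is valid but unnecessary. At every $(\tau_1,\tau_2)$, both $\phi_{\sigma_{B,H}}(E_1\boxtimes E_2)$ and $\phi_{\tau_1}(E_1)+\phi_{\tau_2}(E_2)-1$ already lie in $(-1,1)$ for $\cO(k_1,k_2)$, and in $(0,1)$ for $\cO(k)\boxtimes\cO_y$. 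This is how the paper concludes.

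The genuine gap is case~(2). The sentence ``$C^2=0$ rules out walls'' is the whole content of that step, and you give no argument for it. You have located the dangerous subobjects correctly. Besides torsion subsheaves $\cO_C(\ell)\subseteq\cO_C(k)$, a subobject of $\cO_C(k)$ in $\cA_{B,H}$ can be a torsion-free $G\in\cT_{B,H}$ of arbitrary rank $r\geq1$ with $0\to F\to G\to\cO_C(j)\to0$, where $j\leq k$ and $F\in\cF_{B,H}$. These really occur: for $B=kD_1+(a-\tfrac12)D_2$ and any $H$, the sequence $0\to\cO(k,a)\to\cO(k)\boxtimes\cO_y\to\cO(k,a-1)[1]\to0$ is exact in $\cA_{B,H}$.

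Consequently you cannot close the step by importing the paper's argument either. The paper claims that $H^{-1}(B)$ and $H^0(A)$ have HN factors of the same slopes, forcing $H^{-1}(B)=0$. That is correct when the target sheaf is zero-dimensional, as for skyscrapers. Here, however, the cokernel of $H^{-1}(B)\to H^0(A)$ can carry the class $C$, and the example above, with $H^{-1}(B)=\cO(k,a-1)\neq0$, is a counterexample.

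For the subobject in this example, the product formula does settle the phase comparison. In $\sigma_{B,H}$-phases, $\phi(\cO(k,a))=\phi_{\tau_1}(\cO(k))+\phi_{\tau_2}(\cO(a))-1<\phi_{\tau_1}(\cO(k))=\phi(\cO(k)\boxtimes\cO_y)$. What is missing is the inequality $\phi(G)<\phi(\cO_C(j))$ for all such elementary modifications, of any rank and not necessarily of product form. This needs a real numerical argument, for instance Bogomolov-type bounds on the HN factors of $F$ and $G$ combined with $\mu_{H,\max}(F)\leq\frac{B.H}{H^2}<\mu_{H,\min}(G)$. Nothing in your sketch explains why $C^2=0$ alone should exclude such walls.
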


\begin{proof}
By Lemma~\ref{lemma:Z-product-geometric}, the central charge of $\sigma_{B,H}\cdot\left(c_1+c_2+\sqrt{-1}\pi\right)$ satisfies the product condition \ref{item:p1xp1thm-item1}.
To verify the second condition, we need to show that each of the following objects
$$
\cO(k_1,k_2), \quad \cO(k)\boxtimes \cO_x, \quad \cO_x\boxtimes\cO(k), \quad \cO_{(x_1,x_2)}
$$
is $\sigma_{B,H}\cdot\left(c_1+c_2+\sqrt{-1}\pi\right)$-stable and has the desired phase.

\medskip\noindent\textbf{Skyscraper sheaves $\cO_{(x_1,x_2)}$:}
Skyscraper sheaves are stable with respect to any geometric stability conditions; their phases satisfy:
\begin{align*}
\phi_1(\cO_{x_1})+\phi_2(\cO_{x_2}) & =\left(\frac{1}{\pi}\text{Im}(c_1)+1\right)+\left(\frac{1}{\pi}\text{Im}(c_2)+1\right) \\
&= \frac{1}{\pi}\text{Im}\left(c_1+c_2+\sqrt{-1}\pi\right)+1 \\
&= \phi_{\sigma_{B,H}\cdot\left(c_1+c_2+\sqrt{-1}\pi\right)}(\cO_{(x_1,x_2)}).
\end{align*}

\medskip\noindent\textbf{Line bundles $\cO(k_1,k_2)$:}
The stability of line bundles on $\bP^1\times\bP^1$ with respect to geometric stability conditions follows from \cite[Theorem~1.1]{ArcaraMiles}, as there are no curves in $\bP^1\times\bP^1$ of negative self-intersection.
It remains to show that their phases satisfy:
$$
\phi_1(\cO(k_1))+\phi_2(\cO(k_2))=\phi_{\sigma_{B,H}\cdot\left(c_1+c_2+\sqrt{-1}\pi\right)}(\cO(k_1,k_2)).
$$
Since the central charge is of product-type, we have
$$
\phi_1(\cO(k_1))+\phi_2(\cO(k_2))-\phi_{\sigma_{B,H}\cdot\left(c_1+c_2+\sqrt{-1}\pi\right)}(\cO(k_1,k_2))\in2\bZ.
$$
The phase of $\cO(k_i)$ with respect to $\sigma_i$ lies in $\left(\frac{1}{\pi}\text{Im}(c_i),\frac{1}{\pi}\text{Im}(c_i)+1\right)$, thus
$$
\phi_1(\cO(k_1))+\phi_2(\cO(k_2))\in\left(\frac{1}{\pi}\text{Im}(c_1+c_2),\frac{1}{\pi}\text{Im}(c_1+c_2)+2\right).
$$
On the other hand, we have $\phi_{\sigma_{B,H}}(\cO(k_1,k_2))\in(-1,1)$ by \cite[Lemma~10.1]{BriK3}, thus
$$
\phi_{\sigma_{B,H}\cdot\left(c_1+c_2+\sqrt{-1}\pi\right)}(\cO(k_1,k_2))\in\left(\frac{1}{\pi}\text{Im}(c_1+c_2),\frac{1}{\pi}\text{Im}(c_1+c_2)+2\right).
$$
This proves that the equality of phases holds.

\medskip\noindent\textbf{Torsion sheaves $\cO(k)\boxtimes\cO_x$:}
Recall that the heart $\cA_{B,H}$ of the stability condition $\sigma_{B,H}$ is given by the tilting construction: Consider the torsion pair
$$
\cT_{B,H}=\left<\text{torsion sheaves}, \mu_H\text{-stable sheaves } E \text{ with } \mu_H(E)>\frac{B.H}{H^2}\right>,
$$
$$
\cF_{B,H}=\left<\mu_H\text{-stable sheaves } E \text{ with } \mu_H(E)\leq\frac{B.H}{H^2}\right>.
$$
Then the heart $\cA_{B,H}$ is given by
$$
\cA_{B,H}=\left<\cT_{B,H},\cF_{B,H}[1]\right>.
$$

Let $\iota\colon\bP^1\times\{pt\}\hookrightarrow\bP^1\times\bP^1$ be the inclusion of the first factor. We claim that for any $k\in\bZ$,
$$
\iota_*\cO(k)\cong\cO(k)\boxtimes\cO_x
$$
is $\sigma_{B,H}$-stable.
The argument is similar to \cite[Lemma~6.3]{BriK3}.
Observe that the object $\iota_*\cO(k)\in\cT_{B,H}\subseteq\cA_{B,H}$ lies in the heart.
Suppose there is a short exact sequence in $\cA_{B,H}$
$$
0\to A\to \iota_*\cO(k)\to B\to 0 .
$$
Taking cohomology yields
$$
0\to H^{-1}(B) \to H^0(A) \to \iota_*\cO(k) \to H^0(B) \to 0,
$$
where $H^{-1}(B)$ is torsion-free by \cite[Lemma~10.1]{BriK3}.
Therefore, the $\mu_H$-semistable factors of $H^{-1}(B)$ and $H^0(A)$ have the same slopes.
This contradicts with the definition of $\cA_{B,H}$ unless $H^{-1}(B)=0$.
Thus, both $A,B\in T$ are torsion sheaves and the original sequence is an exact sequence of sheaves.

Proper subobjects of $\iota_*\cO(k)$ are of the form $\iota_*\cO(\ell)$ with $\ell<k$; it remains to check that they have strictly smaller phases.
\begin{align*}
Z_{B,H}(\iota_*\cO(\ell)) &= -\ell + (B+iH).[\bP^1\times\{pt\}] \\
& = -\ell + (b_2+ih_2).
\end{align*}
When $\ell<k$, the phase of $\iota_*\cO(\ell)$ is strictly smaller than that of $\iota_*\cO(k)$.
Hence $\iota_*\cO(k)$ is $\sigma_{B,H}$-stable.

Finally, we verify that $\cO(k)\boxtimes\cO_x$ has the desired phase. Again, because the central charge is of product-type, we have
$$
\phi_1(\cO(k))+\phi_2(\cO_x)-\phi_{\sigma_{B,H}\cdot\left(c_1+c_2+\sqrt{-1}\pi\right)}(\cO(k)\boxtimes\cO_x)\in2\bZ.
$$
We have
$$
\phi_1(\cO(k))+\phi_2(\cO_x)\in\left(\frac{1}{\pi}\text{Im}(c_1+c_2)+1,\frac{1}{\pi}\text{Im}(c_1+c_2)+2\right).
$$
By \cite[Lemma~10.1]{BriK3}, the torsion sheaf has phase $\phi_{\sigma_{B,H}}(\cO(k)\boxtimes\cO_x)\in(0,1)$, thus
$$
\phi_{\sigma_{B,H}\cdot\left(c_1+c_2+\sqrt{-1}\pi\right)}(\cO(k)\boxtimes\cO_x)\in\left(\frac{1}{\pi}\text{Im}(c_1+c_2)+1,\frac{1}{\pi}\text{Im}(c_1+c_2)+2\right).
$$
This concludes the proof that the stability condition $\sigma_{B,H}\cdot\left(c_1+c_2+\sqrt{-1}\pi\right)$ satisfies Conditions~\ref{item:p1xp1thm-item1} and \ref{item:p1xp1thm-item2}.

\medskip\noindent\textbf{Uniqueness.}
Geometric stability conditions are uniquely determined by their central charge together with the phases of skyscraper sheaves.
The product‑type conditions fix both, so the constructed stability condition is unique.
\end{proof}

\begin{proof}[Proof of Theorem~\ref{thm:P1xP1}]
The existence and uniqueness of the product stability conditions follow from Theorem~\ref{thm:main-AG-Sn} (the case where at most one factor is geometric) and Proposition~\ref{prop:GeomxGeom} (the purely geometric case).

\medskip\noindent\textbf{Part~\ref{item:p1xp1thm-itema}.}
The only case not treated by Theorem~\ref{thm:main-AG-Sn} occurs when $\sigma_1,\sigma_2$ are both geometric.
Suppose $\sigma_1,\sigma_2\in\Stab^\Geo\Db(\bP^1)$ and that $E_1\boxtimes E_2$ is $(\sigma_1\boxtimes\sigma_2)$-stable.
We need to show that each $E_i$ is a line bundle or a skyscraper sheaf (up to shift).
If either $E_1$ or $E_2$ were not of this form, then $E_1\boxtimes E_2$ would be decomposable, contradicting its stability.

\medskip\noindent\textbf{Part~\ref{item:p1xp1thm-itemb}.}
Assume that two product‑type stability conditions coincide up to the $\bC$-action:
$$
\overline{\sigma_1\boxtimes\sigma_2}=\overline{\sigma'_1\boxtimes\sigma'_2}\in\Stab\Db(\bP^1\times\bP^1)/\bC.
$$
Comparing the sets of stable objects shows that for each factor the two stability conditions must be of the same type: either both geometric or both algebraic.
The only case not covered by Theorem~\ref{thm:main-AG-Sn} is when all four stability conditions $\sigma_1,\sigma_1',\sigma_2,\sigma_2'$ are geometric.
Write
$$
\sigma_i=\sigma_{\tau_i}\cdot c_i, \qquad \sigma_i'=\sigma_{\tau_i'}\cdot c_i'
$$
for $\tau_i,\tau_i'\in\bH$ and $c_i,c_i'\in\bC$ as in Proposition~\ref{prop:GeomxGeom}.
By the same proposition, we have
$$
\sigma_1\boxtimes\sigma_2=\sigma_{B,H}\cdot\left(c_1+c_2+\sqrt{-1}\pi\right) \quad \text{ and } \quad
\sigma_1'\boxtimes\sigma_2'=\sigma_{B',H'}\cdot\left(c_1'+c_2'+\sqrt{-1}\pi\right)
$$
where
$$
B=\text{Re}(\tau_1)D_1+\text{Re}(\tau_2)D_2, \qquad    
B'=\text{Re}(\tau_1')D_1+\text{Re}(\tau_2')D_2,
$$
$$
H=\text{Im}(\tau_1)D_1+\text{Im}(\tau_2)D_2, \qquad    
H=\text{Im}(\tau_1')D_1+\text{Im}(\tau_2')D_2.
$$
The assumption $\overline{\sigma_1\boxtimes\sigma_2}=\overline{\sigma'_1\boxtimes\sigma'_2}$ is equivalent to 
$$
B=B' \quad \text{ and } \quad H=H'
$$
which in turn is equivalent to
$$
\tau_1=\tau_1' \quad \text{ and } \quad \tau_2=\tau_2'.
$$
Hence $\overline{\sigma_i}=\overline{\sigma_i'}$ for $i=1,2$, completing the proof.
\end{proof}

\begin{rmk}
\label{rmk:Diagonal_stable}
The argument used for torsion sheaves above also shows that $\Delta_*\cO(m)$ is $(\sigma_1\boxtimes\sigma_2)$-stable for $\sigma_1,\sigma_2\in\Stab^\Geo\Db(\bP^1)$, where $\Delta\colon\bP^1\hookrightarrow\bP^1\times\bP^1$ is the diagonal embedding.
Therefore, in the geometric case, not every stable object splits as a product $E_1\boxtimes E_2$, in contrast with the situation in Theorem~\ref{thm:main-AG-Sn} (where at most one factor is geometric).

It would be interesting to investigate whether the mirror object $\Mir_2(\Delta_*\cO(m))$ admits a special Lagrangian representative with respect to the holomorphic $2$-form associated to $\sigma_1\boxtimes\sigma_2$.
\end{rmk}

\subsection{Product-type stability conditions on products of two elliptic curves.}
\label{subsec:last-ExE}

\begin{thm}
\label{thm:E1xE2}
Let $X_1$ and $X_2$ be two elliptic curves.
For every $(\sigma_1,\sigma_2)\in\Stab\Db(X_1)\times\Stab\Db(X_2)$, there exists a unique stability condition, denoted by $\sigma_1\boxtimes\sigma_2\in\Stab\Db(X_1\times X_2)$, such that:
\begin{enumerate}[label=(\roman*)]
\item Its central charge satisfies
$$
Z_{\sigma_1\boxtimes\sigma_2}(E_1\boxtimes E_2)=Z_{\sigma_1}(E_1)Z_{\sigma_2}(E_2)
$$
for all objects $E_i\in\Db(X_i)$.
\item If each $E_i\in\Db(X_i)$ is $\sigma_i$-stable of phase $\phi_i$, then $E_1\boxtimes E_2$ is $(\sigma_1\boxtimes\sigma_2)$-stable of phase $\phi_1+\phi_2$.
\end{enumerate}
Moreover, these product-type stability conditions satisfy:
\begin{enumerate}[label=(\alph*)]
\item\label{item:ExE-item-a} An object $E_1\boxtimes E_2$ is $(\sigma_1\boxtimes\sigma_2)$-stable if and only if $E_i$ is $\sigma_i$-stable for both $i=1,2$.
\item\label{item:ExE-item-b} Two elements $(\sigma_1,\sigma_2),(\sigma'_1,\sigma'_2)\in\Stab\Db(X_1)\times\Stab\Db(X_2)$ yield the same product stability condition up to the $\bC$-action, i.e.,
$$
\overline{\sigma_1\boxtimes\sigma_2}=\overline{\sigma'_1\boxtimes\sigma'_2}\in\Stab\Db(X_1\times X_2)/\bC,
$$
if and only if $(\overline{\sigma_1},\overline{\sigma_2})=(\overline{\sigma'_1},\overline{\sigma'_2})$ in $(\Stab\Db(X_1)/\bC)\times(\Stab\Db(X_2)/\bC)$.
\end{enumerate}
Consequently, there is a well-defined embedding
$$
(\Stab\Db(X_1)/\bC)\times (\Stab\Db(X_2)/\bC)\hookrightarrow\Stab\Db(X_1\times X_2)/\bC.
$$
\end{thm}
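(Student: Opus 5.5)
The plan is to reduce everything to the geometric case on the abelian surface $X_1\times X_2$, mirroring Proposition~\ref{prop:GeomxGeom}. By Bridgeland's description of $\Stab\Db(X)$ for an elliptic curve, every stability condition is geometric and $\Stab\Db(X_i)\cong\widetilde{\GL^+(2,\bR)}$. Concretely, every $\sigma_i$ can be written as $\sigma_{\tau_i}\cdot c_i$ with $\tau_i\in\bH$ and $c_i\in\bC$. Here $\sigma_{\tau}$ has central charge $Z_\tau(E)=-\deg(E)+\tau\rank(E)$, skyscraper sheaves are stable of phase $1$, and the stable objects are exactly the shifts of stable vector bundles and of skyscraper sheaves. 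On $X_1\times X_2$, let $D_1,D_2$ be the pullbacks of the point classes, set $B=\mathrm{Re}(\tau_1)D_1+\mathrm{Re}(\tau_2)D_2$ and $H=\mathrm{Im}(\tau_1)D_1+\mathrm{Im}(\tau_2)D_2$, and take $\sigma_{B,H}\cdot(c_1+c_2+\sqrt{-1}\pi)$ as the candidate for $\sigma_1\boxtimes\sigma_2$. The stability condition $\sigma_{B,H}$ exists by \cite{BriK3,ArcaraBertram}; note that $H$ is ample because $h_1,h_2>0$.

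\textbf{Step 1: central charge.} Repeat the computation of Lemma~\ref{lemma:Z-product-geometric} using Mukai vectors. For $E_1\boxtimes E_2$ with $r_i=\rank E_i$ and $d_i=\deg E_i$, we have $\ch(E_1\boxtimes E_2)=(r_1+d_1D_1)(r_2+d_2D_2)$ with $D_1D_2=1$ and $D_i^2=0$. Since $\mathrm{td}$ is trivial, this gives $Z_{B,H}(E_1\boxtimes E_2)=-Z_1(E_1)Z_2(E_2)$. The numerical classes of such products span the relevant lattice $\bZ r\oplus\bZ D_1\oplus\bZ D_2\oplus\bZ\mathrm{pt}$ in the algebraic Mukai lattice. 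If we work with the sublattice $\Lambda$ spanned by products, uniqueness of $Z$ is automatic. In general one must either restrict to this $\Lambda$ or note that $Z_{B,H}$ is also defined on the full numerical lattice. I would state the result for $\Lambda$ generated by the image of $N(X_1)\otimes N(X_2)$, as is implicit in condition~(i).

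\textbf{Step 2: stability and phases of products of stable objects.} This is the main obstacle. There are three cases.
\begin{itemize}
\item $E_1,E_2$ both skyscrapers: stability and phase are immediate, as in Proposition~\ref{prop:GeomxGeom}.
\item Exactly one factor a skyscraper, say $E_1\boxtimes\cO_x=\iota_*E_1$ with $E_1$ a stable bundle on $X_1$: I would copy the torsion-sheaf argument of Proposition~\ref{prop:GeomxGeom}. Any destabilizing sequence in $\cA_{B,H}$ is forced to be a sequence of torsion sheaves supported on $X_1\times\{x\}$, and the phase then reduces to the slope on the curve, where $E_1$ is stable.
\item Both factors stable bundles: $E_1\boxtimes E_2$ is slope-semistable (indeed stable, being a tensor product of pullbacks of stable bundles of coprime rank/degree on distinct factors) and \emph{semi-homogeneous}. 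This is the hardest case. I would first try to handle it with the known result that semi-homogeneous/simple bundles $F$ on abelian surfaces with $v(F)^2=0$ are $\sigma_{B,H}$-stable for all $(B,H)$. The reasoning is that on an abelian surface there are no walls for a primitive Mukai vector with $v^2=0$, because any destabilizing subobject would require a class of negative square, and abelian surfaces have no spherical objects. Since $v(E_1\boxtimes E_2)^2=0$ by the product computation, and the object is stable in the large volume limit (Gieseker stable), it stays stable everywhere; this is the analogue of \cite{ArcaraMiles} used for line bundles on $\bP^1\times\bP^1$.
\end{itemize}
In each case, the phase is then pinned down modulo $2\bZ$ by the product formula. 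The integer ambiguity is removed exactly as in Proposition~\ref{prop:GeomxGeom}, using the range $(-1,1]$ of phases of sheaves in $\cA_{B,H}$ from \cite[Lemma~10.1]{BriK3} together with the intervals $(\tfrac1\pi\mathrm{Im}\,c_i,\tfrac1\pi\mathrm{Im}\,c_i+1]$ for the factors.

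\textbf{Step 3: uniqueness, (a), and (b).} Uniqueness follows because skyscraper sheaves $\cO_{(x_1,x_2)}$ are forced to be stable of a prescribed phase. By \cite{BriK3}, a geometric stability condition on a surface is determined by its central charge and the phase of points. For (a), the converse direction uses the fact that if $E_1\boxtimes E_2$ is stable then it is indecomposable and simple. Hence each $E_i$ is indecomposable with $\Hom(E_i,E_i)=\bC$, and on an elliptic curve this forces $E_i$ to be a shift of a stable object. For (b), the argument is as in the proof of Theorem~\ref{thm:P1xP1}: equality in $\Stab/\bC$ gives $(B,H)=(B',H')$ up to the common $\bC$-action, hence $\tau_i=\tau_i'$, hence $\overline{\sigma_i}=\overline{\sigma_i'}$.
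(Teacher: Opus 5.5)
Your Step~2 has a genuine gap in the mixed case $E_1\boxtimes\cO_x=\iota_*E_1$, with $C=X_1\times\{x\}$.

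\textbf{The gap.} You borrow a reduction from Proposition~\ref{prop:GeomxGeom}: that in any short exact sequence $0\to A\to\iota_*E_1\to B\to 0$ in $\cA_{B,H}$ one has $H^{-1}(B)=0$, so that everything is a torsion sheaf on $C$. This is false. The slope comparison of \cite[Lemma~6.3]{BriK3} works for $\cO_x$ because there the cokernel $T$ of $H^{-1}(B)\hookrightarrow H^0(A)$ is $0$-dimensional. Here $T$ is a nonzero subsheaf of $\iota_*E_1$, so $c_1(T)$ is a positive multiple of $[C]$. The $\mu_H$-slopes of $H^0(A)$ and $H^{-1}(B)$ then differ by $c_1(T)\cdot H/\rank>0$, and nothing contradicts the torsion pair.

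\textbf{A counterexample.} Let $E_1=L_1$ be a line bundle and $L=L_1\boxtimes L_2$, with $\deg L_2$ chosen so that $0<(c_1(L)-B)\cdot H\le C\cdot H$. This is possible because changing $\deg L_2$ by one changes $c_1(L)\cdot H$ by $C\cdot H$. Then $L\in\cT_{B,H}$ and $L(-C)\in\cF_{B,H}$, and $0\to L\to\iota_*L_1\to L(-C)[1]\to 0$ is exact in $\cA_{B,H}$, with $H^{-1}$ of the quotient equal to $L(-C)\neq 0$. This particular $L$ does not destabilize: if $\phi_i\in(0,1)$ is the $\sigma_{\tau_i}$-phase of $L_i$, the $\sigma_{B,H}$-phases of $L$ and $\iota_*L_1$ are $\phi_1+\phi_2-1$ and $\phi_1$. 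But your argument gives no control over subobjects of this kind, so stability of $E_1\boxtimes\cO_x$ is not proved. The sketch in Proposition~\ref{prop:GeomxGeom} that you propose to copy has the same weakness.

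\textbf{How the paper avoids this.} It applies $\mathrm{id}\boxtimes\Phi$, with $\Phi$ an autoequivalence of $\Db(X_2)$ taking $\cO_x$ to a stable bundle. This reduces every case to products of stable bundles. It then uses that simple semihomogeneous bundles are stable for \emph{every} stability condition \cite[Corollary~2.16]{FLZ}. The transport is harmless because every stability condition on $X_1\times X_2$ is geometric \cite[Theorem~1.1]{FLZ}.

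\textbf{Repairing it within your framework.} Your no-walls argument for primitive isotropic Mukai vectors is in substance the same input. It could also handle your second case directly, since $v(\iota_*E_1)=(0,r_1[C],d_1)$ is again primitive with square zero. But you would still need $\iota_*E_1$ to be stable at one point of the family $\{\sigma_{B,H}\cdot g\}$, and that is exactly what your torsion-sheaf argument was supposed to supply.

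\textbf{The remaining steps.} Everything else matches the paper: the same candidate, the same phase-pinning, uniqueness, and (b). For (a) you give a different and valid argument in place of the paper's Jordan--H\"older argument. A stable object is simple; K\"unneth gives $\mathrm{End}(E_1)\otimes\mathrm{End}(E_2)\subseteq\mathrm{End}(E_1\boxtimes E_2)$; and simple objects on an elliptic curve are shifts of stable sheaves by Atiyah's classification.

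Your lattice caveat in Step~1 is also well taken. Classes of products span $N(X_1\times X_2)$ only when $\rho(X_1\times X_2)=2$, i.e.\ when $X_1$ and $X_2$ are non-isogenous. Otherwise condition (i) does not determine $Z$ on the whole numerical lattice, which the paper's uniqueness argument tacitly assumes.
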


\begin{proof}
All stability conditions on elliptic curves are geometric. Write
$$
\sigma_1=\sigma_{\tau_1}\cdot c_1 \quad \text{ and } \quad
\sigma_2=\sigma_{\tau_2}\cdot c_2 \quad (\tau_i\in\bH,\; c_i\in\bC)
$$
as in Proposition~\ref{prop:GeomxGeom}.
Let $D_i$ be the divisors corresponding to the fibres of $X_1\times X_2$, and set
$$
B=\text{Re}(\tau_1)D_1+\text{Re}(\tau_2)D_2, \qquad    
H=\text{Im}(\tau_1)D_1+\text{Im}(\tau_2)D_2.
$$
Then the geometric stability condition
$$
\sigma_{B,H}\cdot\left(c_1+c_2+\sqrt{-1}\pi\right)\in\Stab\Db(X_1\times X_2)
$$
has the desired central charge (see Lemma~\ref{lemma:Z-product-geometric}).
We need to show that the objects
$$
E_1\boxtimes E_2, \quad E\boxtimes\cO_x, \quad \cO_x\boxtimes E, \quad \cO_{(x_1,x_2)},
$$
where each $E$ is a slope‑stable vector bundle on the corresponding elliptic curve, are $\sigma_{B,H}$-stable and have the correct phases.
The phase computation is identical to that in Proposition~\ref{prop:GeomxGeom}, thus it suffices to establish the stability.

By \cite[Theorem~1.1]{FLZ}, every stability condition on $\Stab\Db(X_1\times X_2)$ is geometric.
Applying a Fourier--Mukai transform that sends a skyscraper sheaf to a slope‑stable vector bundle reduces the problem to the following claim:

\begin{center}
``If each $E_i$ is a slope-stable vector bundle on the elliptic curve $X_i$, then $E_1\boxtimes E_2$\\ is stable with respect to \emph{any} stability condition on $\Stab\Db(X_1\times X_2)$."
\end{center}

Slope‑stable vector bundles on an elliptic curve are simple and semihomogeneous. The external tensor product of two simple semihomogeneous bundles is again simple and semihomogeneous, and by \cite[Corollary~2.16]{FLZ} any such object is stable with respect to every stability condition on the product. This proves the claim.

\medskip\noindent\textbf{Uniqueness.}
The uniqueness of such product-type stability conditions again follows from the fact that geometric stability conditions are uniquely determined by the central charges and the phases of skyscraper sheaves.

\medskip\noindent\textbf{Part~\ref{item:ExE-item-a}}.
Suppose $E_1\boxtimes E_2$ is $(\sigma_1\boxtimes\sigma_2)$-stable.
Shifting if necessary, we may assume $E_i\in\Coh(X_i)$ and is indecomposable, hence slope‑semistable.
Assume for contradiction that some $E_i$ is not slope‑stable.
Let $\{F^{(1)}_i\}$ be the slope-stable factors of $E_1$ (all of phase $\phi_1$) and $\{F^{(2)}_j\}$ be the slope-stable factors of $E_2$ (all of phase $\phi_2$).
Then, each $F^{(1)}_i\boxtimes F^{(2)}_j$ is $(\sigma_1\boxtimes\sigma_2)$-stable of phase $\phi_1+\phi_2$. 
Since $E_1\boxtimes E_2$ can be obtained from these factors by a sequence of extensions, it would be strictly semistable of the same phase, contradicting the assumed stability.
Hence both $E_1$ and $E_2$ must be slope-stable.

\medskip\noindent\textbf{Part~\ref{item:ExE-item-b}}.
The argument for part (b) is identical to that given for Theorem~\ref{thm:P1xP1}.
\end{proof}

\appendix
\section{A fact on pure algebraic stability conditions.}
\begin{prop}\label{prop:appendix-pure}
Let $\cE=\{E_1,\ldots,E_n\}$ be a full strong exceptional collection in a triangulated category $\cD$.
Given positive numbers $m_1,\ldots,m_n>0$ and real numbers $\phi_1<\cdots<\phi_n$ with $\phi_i+1\leq \phi_{i+1}$ for all $i$, there exists a unique stability condition on $\cD$ such that:
\begin{enumerate}[label=(\roman*)]
    \item Each $E_i$ is stable of phase $\phi_i$.
    \item $Z(E_i)=m_ie^{\sqrt{-1}\pi\phi_i}$.
\end{enumerate}
Moreover, with respect to this stability condition, the only stable objects are $E_i$'s and their shifts.
\end{prop}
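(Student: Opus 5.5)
The plan mirrors the proof of Proposition~\ref{prop:existence-case1}, with the partial order on $\{0,1\}^n$ replaced by the linear order of the exceptional collection. Set $p_i=-\lceil\phi_i-1\rceil$, so that $\phi_i+p_i\in(0,1]$, and consider the shifted collection $\cE'=\{E_1[p_1],\ldots,E_n[p_n]\}$. It is still full, since shifts do not change the generated triangulated category.

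First we show $\cE'$ is Ext-exceptional. For $i>j$ all morphisms $E_i\to E_j[k]$ vanish by exceptionality. For $i<j$, strongness means $\Hom(E_i,E_j[k])$ is concentrated in $k=0$. Since $\phi_j\geq\phi_i+(j-i)\geq\phi_i+1$, we get $p_i\geq p_j+1$. Hence
$$\Hom^k(E_i[p_i],E_j[p_j])=\Hom^{k+p_j-p_i}(E_i,E_j)=0\quad\text{for }k\leq0,$$
because $k+p_j-p_i<0$. Lemma~\ref{lemma:Macri} then gives a heart $\cA=\langle\cE'\rangle$ and a unique stability condition with each $E_i[p_i]$ stable of phase $\phi_i+p_i\in(0,1]$ and $Z(E_i[p_i])=(-1)^{p_i}m_ie^{\sqrt{-1}\pi\phi_i}$. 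Equivalently, $E_i$ is stable of phase $\phi_i$ with $Z(E_i)=m_ie^{\sqrt{-1}\pi\phi_i}$. This gives existence.

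For uniqueness we take any stability condition satisfying (i) and (ii). The $E_i[p_i]$ are then semistable with phases in $(0,1]$, so they lie in its heart $\cP(0,1]$. Being a heart containing the heart $\langle\cE'\rangle$ of a bounded t-structure, it must coincide with $\langle\cE'\rangle$, since nested hearts of bounded t-structures are equal. The central charge is determined on the generators $E_i$, hence everywhere. Alternatively, this is just the uniqueness clause of Lemma~\ref{lemma:Macri}.

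The remaining and main step is the classification of stable objects. Every $E\in\cA$ has a filtration with subquotients in $\cE'$, and we want to reorder it so that phases are non-increasing. The key claim is
$$\Hom^1(E_j[p_j],E_i[p_i])=0\quad\text{whenever}\quad\phi(E_i[p_i])<\phi(E_j[p_j]).$$
To prove it, suppose the Hom is nonzero. Strongness and exceptionality force $j<i$ and $p_j=p_i+1$. Then
$$\phi(E_i[p_i])=\phi_i+p_i\geq\phi_j+1+p_j-1=\phi(E_j[p_j]),$$
a contradiction. Given the claim, any adjacent pair of subquotients in the wrong order splits as a direct sum, so the two can be swapped. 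Repeating this bubble-sort procedure yields a filtration with non-increasing phases. Grouping the factors of equal phase gives the Harder--Narasimhan filtration, whose semistable factors are extensions of the $E_i[p_i]$ of that phase.

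A stable object $E$ has a single HN factor. Its Jordan--Hölder factors are then among the stable $E_i[p_i]$, and they all have the same phase. Since distinct $E_i$ with equal phase have no mutual extensions in degree $1$ (for $i<j$, equal shifted phase forces $p_i=p_j$, so $\Hom^1=\Hom^1(E_i,E_j)=0$), $E$ is a direct sum of such objects. Stability forces it to be a single $E_i[p_i]$, so up to shift the only stable objects are the $E_i$. The delicate point is this bookkeeping of equal phases, which arises precisely when $\phi_{i+1}=\phi_i+1$.
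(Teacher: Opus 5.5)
Up to your last paragraph this is the paper's proof: the same shifts $p_i$, the same Ext-exceptionality check feeding into Lemma~\ref{lemma:Macri}, the same key $\Hom^1$-vanishing, and the same reordering of filtrations. The final step, however, rests on a false claim. If $i<j$ and $\phi_i+p_i=\phi_j+p_j$, then $p_i-p_j=\phi_j-\phi_i\geq 1$, so $p_i\neq p_j$. Moreover $\Hom^1(E_i[p_i],E_j[p_j])=\Hom^{1-(\phi_j-\phi_i)}(E_i,E_j)$. When $\phi_j=\phi_i+1$ (which forces $j=i+1$), this group is $\Hom(E_i,E_j)$, which is generally nonzero.

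For instance, take $\{\cO,\cO(1)\}$ on $\bP^1$ with $\phi_1=\tfrac12$ and $\phi_2=\tfrac32$. Then $\cO$ and $\cO(1)[-1]$ both have phase $\tfrac12$, and $0\to\cO(1)[-1]\to\cO_x[-1]\to\cO\to0$ is a nonsplit extension in $\cA$. So in exactly the boundary case $\phi_{i+1}=\phi_i+1$ that you single out as delicate, equal-phase factors need not split. Your step ``$E$ is a direct sum of such objects'' therefore does not follow.

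The conclusion survives, and no splitting argument is needed. After reordering, suppose the filtration $0=F_0\subsetneq F_1\subsetneq\cdots\subsetneq F_m=E$ has $m\geq 2$. Then $F_1\cong E_i[p_i]$ is a proper nonzero subobject of $E$ in $\cA$ that carries the maximal phase. Since $Z(E)$ is a sum of vectors with phases in $(0,\phi(F_1)]$, we get $\phi(F_1)\geq\phi(E)$, which contradicts stability of $E$. This is exactly how the paper concludes that a nonzero object of $\cA$ is stable if and only if its reordered filtration has length one.
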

\begin{proof}
Define $p_i=-\lceil\phi_i-1\rceil$; then $ \phi_i + p_i \in (0, 1]$ for every $i$. 
For any $i<j$, we have $\phi_i+1\leq\phi_{i+1}\leq\phi_j$, therefore $p_i>p_j$.
Hence the shifted collection
$$
\cE\coloneqq\{E_1[p_1],\ldots,E_n[p_n]\}
$$
is a complete Ext-exceptional collection. 
By \cite[Section~3.3]{Macri} (see Lemma~\ref{lemma:Macri}), there exists a unique stability condition on $\cD$ for which each $E_i[p_i]$ is stable of phase $ \phi_i + p_i$, and has central charge $Z(E_i[p_i])=m_ie^{\sqrt{-1}\pi(\phi_i+p_i)}$. 
Shifting back yields the required stability condition for the original objects $E_i$ , proving the existence and uniqueness.

It remains to show that the only stable objects are the $E_i$'s and their shifts.
The heart associated to this stability condition is $\cA=\left<E_1[p_1],\ldots,E_n[p_n]\right>$.
Every object $E\in\cA$ admits a finite filtration in $\cA$ whose subquotients are isomorphic to some $E_i[p_i]$.
We claim that 
$$
\Hom^1(E_j[p_j],E_i[p_i])=0 \quad \text{ if } \quad \phi(E_i[p_i])<\phi(E_j[p_j]).
$$
Indeed, the extension is nonzero only if $i>j$ and $p_j=p_i+1$; in this case,
$$
\phi(E_i[p_i])=\phi_i+p_i\geq(\phi_j+1)+(p_j-1)=\phi(E_j[p_j]),
$$
contradicting the assumed inequality. Hence the claim holds.

Because these extensions vanish, we may reorder any such filtration so that the subquotients appear with non‑increasing phases.
Therefore, a non‑zero object of $\cA$ is stable exactly when its filtration has length one, i.e.~when it is isomorphic to some $E_i[p_i]$. Thus $E_i$'s and their shifts are the only stable objects.
\end{proof}

\bigskip
\bibliography{ref}
\bibliographystyle{alpha}

\ \\

\small{
\noindent Yu-Wei Fan \\
\textsc{Center for Mathematics and Interdisciplinary Sciences, Fudan University, \\ Shanghai 200433, China}\\
\textsc{Shanghai Institute for Mathematics and Interdisciplinary Sciences (SIMIS), \\  Shanghai 200433, China}\\
\texttt{yuweifanx@gmail.com}
}

\end{document}